\newtheorem{theorem}{Theorem}[section]
\newtheorem*{theorem*}{Theorem}
\newtheorem{lemma}[theorem]{Lemma}
\newtheorem*{lemma*}{Lemma}
\newtheorem{corollary}[theorem]{Corollary}
\newtheorem*{corollary*}{Corollary}
\newtheorem{proposition}[theorem]{Proposition}
\newtheorem*{proposition*}{Proposition}
\theoremstyle{definition}
\newtheorem*{definition*}{Definition}
\newtheorem*{example*}{Example}
\newtheorem*{examples*}{Examples}
\newtheorem{remark}[theorem]{Remark}
\newtheorem*{remark*}{Remark}
\newtheorem{result}{Result}
\newtheorem*{question*}{Question}
\newtheorem*{problem*}{Problem}
\newtheorem*{notation*}{Notation}
\newtheorem*{note*}{Note}
\newtheorem*{algorithm*}{Algorithm}
\renewcommand{\arraystretch}{0.75}
\newcommand{\Colon}{\:\mathbin{:}\:}
\newcommand{\Sum}{\displaystyle\sum}
\newcommand{\shape}{\mathbin{\mbox{\rm sh}}\,}
\newcommand{\ic}{\mathbin{{\rm ic}}}
\newcommand{\oc}{\mathbin{{\rm oc}}}
\newcommand{\LEQ}{\leqslant}
\let\unlhd\trianglelefteqslant
\title{On embedding certain Kazhdan--Lusztig cells of $S_n$ into cells of $S_{n+1}$}
\author{T.P.McDonough%
\thanks{%
\textit{Department of Mathematics,
Aberystwyth University,
Aberystwyth SY23 3BZ,
United Kingdom.}
{E-mail: tpd@aber.ac.uk}
}
\ and
C.A.Pallikaros%
\thanks{%
\textit{Department of Mathematics and Statistics,
University of Cyprus,
P.O.Box 20537,
1678 Nicosia,
Cyprus.}
{E-mail: pallikar@ucy.ac.cy}
}
}
\date{December 18, 2017}
\begin{document}
\maketitle
\begin{abstract}
In this paper, we consider a particular class of Kazhdan-Lusztig cells
in the symmetric group $S_n$, the cells containing involutions
associated with compositions $\lambda$ of $n$.
For certain families of compositions we are able to give an explicit
description of the corresponding cells by obtaining reduced forms for
all their elements.
This is achieved by first finding a particular
class of diagrams $\mathcal{E}^{(\lambda)}$ which lead to a subset of
the cell from which the remaining elements of the
cell are easily obtained.
Moreover, we show that for certain cases of
related compositions $\lambda$ and $\hat{\lambda}$ of $n$ and
$n+1$ respectively, the members of $\mathcal{E}^{(\lambda)}$ and
$\mathcal{E}^{(\hat{\lambda})}$ are also related in an analogous
way.
This allows us to associate certain cells in $S_n$ with cells in
$S_{n+1}$ in a well-defined way, which is connected to the induction and restriction of cells.
\\[1.5ex]
Key words: symmetric group; Kazhdan-Lusztig cell;  induction and restriction
\\
2010 MSC Classification: 05E10; 20C08; 20C30
\end{abstract}
\begin{section}
{Introduction}
\label{sec:1c}
In \cite{KLu79}, when investigating the representations of a Coxeter
group and its associated Hecke algebra, Kazhdan and Lusztig introduced the \emph{left cells}, \emph{right cells} and
\emph{two-sided cells} of a Coxeter group.
The cell to which an element of the symmetric group $S_n$ belongs can be determined by
examining the tableaux resulting from an application of the
\emph{Robinson-Schensted process} to that element.
Also, the elements of a cell can be computed by applying the reverse
of the Robinson-Schensted process to a suitable selection of
tableaux pairs.
This, however, does not provide a straightforward way of obtaining reduced expressions for the elements of these cells.
\par
In this paper, we provide an alternative process for determining the
elements of a selection of cells.
These are cells which are associated in a
natural way with compositions of $n$, as the unique involution they contain is an element of longest length in the corresponding standard parabolic subgroup of $S_n$.
We show how to construct a certain set $\mathcal{E}^{(\lambda)}$ of
diagrams associated with the composition $\lambda$.
A diagram $D$, which is a subset of $\mathbb Z^2$, is a generalization of a Young diagram, in particular, we do not insist that $\mu_D''=\lambda_D'$ (where $\lambda_D$ and $\mu_D$ are respectively the row- and column-compositions of $D$).
Each diagram $D$ in $\mathcal{E}^{(\lambda)}$, where $\lambda=\lambda_D$, uniquely determines an
element $w_D$ of $S_n$ and all the elements $w_{J(\lambda)}w_D$ belong to the same cell.
We say the elements $w_D$, with $D\in\mathcal E^{(\lambda)}$, form the \emph{rim} of the
cell.
The remaining elements of the cell are the products of $w_{J(\lambda)}$ with the prefixes of the elements
in the rim.
\par
In this way we are able to give reduced expressions for all the
elements in cells corresponding to certain families of compositions
$\lambda$ (examples of these are given at the end of Section~3).
The techniques introduced earlier on in Section~3 (these are based on the work in Schensted~\cite{Schensted1961} and Greene~\cite{Greene1974} on increasing and decreasing subsequences) are crucial in achieving this and in fact they provide an extension to some of the techniques used in~\cite{MPa15}.
The main contribution of the paper is to show, by the use of the above techniques, that for certain related compositions $\lambda$ and $\hat{\lambda}$ of $n$ and
$n+1$ respectively, the members of $\mathcal{E}^{(\lambda)}$ and
$\mathcal{E}^{(\hat{\lambda})}$ are also related in an analogous
way.
This allows us to associate certain cells in $S_n$ with cells in
$S_{n+1}$ in a well-defined way which is in fact connected with the induction and restriction of Kazhdan-Lusztig cells (see Barbasch and Vogan~\cite{BVo83}).

Before discussing briefly the main results of the paper (Theorems~\ref{TheoremReplaceThm6.1} and~\ref{thm:6.5c} in Section~4) we need to introduce some more notation.
For $\lambda=(\lambda_1,\ldots,\lambda_r)$ a composition of $n$, let $W_{J(\lambda)}$ be the standard parabolic (Young) subgroup of $S_n$ corresponding to $\lambda$ and let $w_{J(\lambda)}$ be the longest element of $W_{J(\lambda)}$.
Also let $\mathfrak X_{J(\lambda)}$ be a complete set of distinguished right coset representatives of $W_{J(\lambda)}$ in $S_n$.
The right cell, $\mathfrak C(\lambda)$, containing $w_{J(\lambda)}$ has has form $w_{J(\lambda)}Z(\lambda)$ for some subset $Z(\lambda)$ of $\mathfrak X_{J(\lambda)}$.
Also set $Y(\lambda)=\{x\in Z(\lambda):$ $x$ is not a prefix of any other $y\in Z(\lambda)\}$.
Then $Y(\lambda)=\{w_D:\ D\in\mathcal{E}^{(\lambda)}\}$ with $\mathcal E^{(\lambda)}$ and $w_D$ as above.
(Note that for a diagram $D$, $w_D$ is the unique element of minimum length in the double coset $W_{J(\lambda_D)}w_DW_{J(\mu_D)}$, and this double coset has the trivial intersection property.)

We will associate to the composition $\lambda$ of $n$ the compositions $\lambda_*$ and $\lambda^{(k)}$ of $n+1$, where $\lambda_*=(\lambda_1,\ldots,\lambda_r,1)$ and $\lambda^{(k)}=(\lambda_1,\ldots,\lambda_{k-1},\lambda_k+1,\lambda_{k+1},\ldots,\lambda_r)$ for  $1\le k\le r$.
Moreover, given a diagram $D$ with row-composition $\lambda_D=\lambda$ we form diagrams $D'$ (with $\lambda_{D'}=\lambda_*$) and $D^{(k)}$ (with $\lambda_{D^{(k)}}=\lambda^{(k)}$) by inserting a new node to $D$ in a well-defined way.


In Theorem~\ref{TheoremReplaceThm6.1} we define, via diagram pairs $(D,D')$, an injection $\theta_*$ from $Z(\lambda)$ to $Z(\lambda_*)$ (this induces an injection from $\mathcal{E}^{(\lambda)}$ to $\mathcal{E}^{(\lambda_*)}$) which satisfies $Y(\lambda)\theta_*\subseteq Y(\lambda_*)\subseteq Z(\lambda)\theta_*$.
So, $\theta_*$ not only relates the rim of the cells $\mathfrak C(\lambda)$ and $\mathfrak C(\lambda_*)$ but also gives a way of determining (and obtaining reduced forms for) all the elements of cell $\mathfrak C(\lambda_*)$ once we have the corresponding information about cell $\mathfrak C(\lambda)$.
In addition, we give the connection of the process described in this theorem with the induction of cells.

Finally, in Theorem~\ref{thm:6.5c} (under the assumption that $\lambda=(\lambda_1,\ldots,\lambda_r)$ is a composition of $n$ with $\lambda_k\ge\lambda_i$ for $1\le i\le r$) we define, now via diagram pairs $(D,D^{(k)})$ an injection $\theta$ from $Z(\lambda)$ to $Z(\lambda_*)$ and give some sufficient conditions for $\theta$ to satisfy $Y(\lambda)\theta \subseteq Y(\lambda^{(k)})$.
In particular, when $k=1$, we have equality $Y(\lambda)\theta=Y(\lambda_*)$
(thus, in this case, $\theta$ induces a bijection from $\mathcal{E}^{(\lambda)}$ to $\mathcal{E}^{(\lambda^{(k)})}$) and this allows us to determine the rim of $\mathfrak C(\lambda^{(k)})$, and hence obtain reduced forms for all the elements of this cell, just from knowledge of the rim of $\mathfrak C(\lambda)$.
We also show the precise connection of this process with the restriction of cells.

\end{section}
\begin{section}
{Induction and restriction of Kazhdan-Lusztig cells of $S_n$}
\label{sec:2c}
We begin this section by recalling some of the basic concepts and
results of the Kazhdan-Lusztig representation theory of Coxeter groups
and Hecke algebras.
For any Coxeter system $(W,S)$, Kazhdan and Lusztig \cite{KLu79}
introduced three preorders $\LEQ_L$, $\LEQ_R$ and $\LEQ_{LR}$, with
corresponding equivalence relations $\sim_L$, $\sim_R$ and $\sim_{LR}$,
whose equivalence classes are called
\emph{left cells}, \emph{right cells} and \emph{two-sided cells},
respectively.

For basic concepts relating to Coxeter groups and Hecke algebras,
see Geck and Pfeiffer \cite{GPf00} and Humphreys \cite{Hum90}.
In particular, for a Coxeter system $(W,S)$,
$W_J=\langle J\rangle$ denotes the parabolic subgroup determined by a
subset $J$ of $S$, $w_J$ denotes the longest element of $W_J$,
$\mathfrak{X}_J$ denotes the set of minimum length elements in the
right cosets of $W_J$ in $W$
(the distinguished right coset representatives),
$\LEQ$ denotes the strong Bruhat order on $W$,
and $w<w'$ means $w\LEQ w'$ and $w\neq w'$ if $w,w'\in W$.
The pair $(W_J,J)$ is a Coxeter system whose length function is the same as the
restriction of the length function of $(W,S)$ to it;
consequently, $w_J$ is determined entirely by $J$.
Also recall the \emph{prefix relation} on the elements of~$W$:
if $x,y\in W$ we say that $x$ is a \emph{prefix} of $y$ if $y$ has a \emph{reduced form} beginning with a reduced form for $x$.

The Hecke algebra $\mathcal{H}$ corresponding to $(W,S)$ and
defined over the ring $A=\mathbf{Z}[q^{\frac{1}{2}},q^{-\frac{1}{2}}]$,
where $q$ is an indeterminate, has a free $A$-basis
$\{T_w\Colon w\in W\}$ and multiplication defined by the rules
\begin{equation*}
\label{eqn:1c}
\begin{tabular}{rl}
(i) & $T_wT_{w'}=T_{ww'}$ if $l(ww')=l(w)+l(w')$ and \\[2pt]
(ii) & $(T_s+1)(T_s-q)=0$ if $s\in S$.
\end{tabular}
\end{equation*}
The basis $\{T_w\Colon w\in W\}$ is called the \emph{$T$-basis} of
$\mathcal{H}$.
(See \cite{KLu79}).
\begin{result}
[{\cite[Theorem~1.1]{KLu79}}]
\label{res:1c}
$\mathcal{H}$ has a basis $\{C_w\Colon w\in W\}$, the \emph{$C$-basis},
whose terms have the form
$C_y =
\Sum_{x\LEQ y}
(-1)^{l(y)-l(x)}q^{\frac{1}{2}l(y)-l(x)}P_{x,y}(q^{-1})T_x$,
where $P_{x,y}(q)$ is a polynomial in $q$ with integer coefficients
of degree $\le \frac{1}{2}\left(l(y)-l(x)-1\right)$ if $x<y$ and
$P_{y,y}=1$.
\end{result}

The following result collects some useful propositions concerning cells.
For proofs of (i), (ii) and (iii), see \cite[2.3ac]{KLu79},
\cite[5.26.1]{Lus84b}, and \cite[Corollary~1.9(c)]{Lus87},
respectively.
See also \cite[Lemma~5.3]{Gec05} for a more elementary algebraic proof
of (iii) in the case that $W$ is the symmetric group.
\begin{result}[\cite{KLu79,Lus84b,Lus87}]
\label{res:2c}
\mbox{}\par
\begin{tabular}[b]{rp{5.25in}}
(i) &
If $x,y,z$ are elements of $W$ such that $x$ is a prefix of $y$,
$y$ is an prefix of $z$ and $x\sim_R z$ then $x\sim_R y$.
\\
(ii) &
If $J\subseteq S$, then the right cell containing $w_J$ is contained
in $w_J\mathfrak{X}_J$.
\\
(iii) &
If $W$ is a crystallographic group and $x,y\in W$ are such that
$x\sim_{LR}y$ and $x\LEQ_R y$ then $x\sim_R y$.
\end{tabular}
\end{result}
\par
Each cell of $W$ provides an integral representation of $W$, with the $C$-basis of $\mathcal H$ playing an important role in the construction of this representation; see
Kazhdan and Lusztig \cite[\S~1]{KLu79}. Barbasch and Vogan \cite{BVo83}
have addressed the question of induction and restriction of such
representations in relation to parabolic subgroups, where $W$ is a
Weyl group. They obtained the following results, which have natural
analogues for right cells.
We quote these results of Barbasch and Vogan as they are formulated in~\cite{Roi98}.

\begin{result}[{\cite[Proposition~3.15]{BVo83}}]
\label{res:4c}
Let $H$ be a parabolic subgroup of a Weyl group $W$.
Let $\mathfrak{C}$ be a Kazhdan-Lusztig left cell of $H$ and let
$\rho^{\mathfrak{C}}$ be its Kazhdan-Lusztig representation of $H$.
Let $L$ be the set of all representatives of minimal length of the
left cosets of $H$ in $W$.
Then $L\mathfrak{C}=\{rv\colon r\in L, v\in\mathfrak{C}\}$ is a union of
Kazhdan-Lusztig left cells.
The associated Kazhdan-Lusztig representation $\rho^{L\mathfrak{C}}$ is
isomorphic to the induced representation
$\rho^{\mathfrak{C}}\!\!\uparrow^{W}_{H}$.
\end{result}

\begin{result}[{\cite[Proposition~3.11]{BVo83}}]
\label{res:3c}
Let $H$ be a parabolic subgroup of a Weyl group $W$.
Let $R$ be the set of representatives of minimal length of the
right cosets of $H$ in $W$.
Let $\mathfrak{C}$ be a Kazhdan-Lusztig left cell of $W$, and let
$\rho^{\mathfrak{C}}$ be the Kazhdan-Lusztig representation of $W$
associated with $\mathfrak{C}$.
Then $\mathfrak{C}$ is a disjoint union of sets of the form
$\mathcal{D}r$ where $r\in R$ and $\mathcal{D}$ is a Kazhdan-Lusztig
cell of $H$.
The direct sum of the associated Kazhdan-Lusztig left representations
of $H$, $\bigoplus_{\mathcal{D}}\rho^{\mathcal{D}}$, corresponds to
the decomposition of the restricted representation
$\rho^{\mathfrak{C}}\!\!\downarrow_{H}^{W}$.
\end{result}

These results have been generalized to all Coxeter groups by
Roichman~\cite{Roi98} and Geck~\cite{Gec03}.

\medskip
We will focus our attention on the symmetric group.
For the basic definitions and background concerning partitions, compositions, Young diagrams and Young tableaux we refer to James~\cite{Jam78}, Fulton~\cite{Ful97} or Sagan~\cite{Sagan}.

All our partitions and compositions will be assumed to be \emph{proper}
(that is, with no zero parts).
We use the notation $\lambda\vDash n$ (respectively, $\lambda\vdash n$)
to say that $\lambda$ is a composition (respectively, partition) of
$n$.
For $\lambda\vDash n$ having $r'$ as its maximum part,
recall that the \emph{conjugate} composition
$\lambda'=(\lambda_1',\ldots,\lambda_{r'}')$ of $\lambda$ is defined by
$\lambda'_i=\left|\{j\Colon 1\leq j\leq r\mbox{ and }
i\leq\lambda_j\}\right|$ for $1\leq i\leq r'$.
It is immediate that $\lambda'$ is a partition of $n$ with $r'$ parts.
\par
If $\lambda$ and $\mu$ are partitions of $n$, write
$\lambda\unlhd\mu$ if, for all $k$,
$\sum_{1\leq i\leq k}\lambda_i \leq \sum_{1\leq i\leq k}\mu_i$.
This is the dominance order of partitions (see \cite[p.8]{Jam78}).
If $\lambda\unlhd\mu$ and $\lambda\neq \mu$, we write $\lambda\lhd\mu$.

In the case of the symmetric group $S_n$, the Robinson-Schensted
correspondence gives a combinatorial method of identifying the
Kazhdan-Lusztig cells.
The Robinson-Schensted correspondence is a bijection of $S_n$
to the set of pairs of standard Young tableaux
$(\mathcal{P},\mathcal{Q})$ of the same shape and with $n$ entries,
where the shape of a tableau is the partition counting the number of
entries on each row.
See \cite{Ful97} or \cite{Sagan}
for a good description of this correspondence.
Denote this correspondence by
$w\mapsto(\mathcal{P}(w),\mathcal{Q}(w))$.
Then $\mathcal{Q}(w)=\mathcal{P}(w^{-1})$.
The \emph{shape} of $w$, denoted by $\shape{w}$, is defined to be
the common shape of the Young tableaux $\mathcal{P}(w)$ and
$\mathcal{Q}(w)$.
The tableaux $\mathcal{P}(w)$ and $\mathcal{Q}(w)$ are called
the \emph{insertion tableau} and the \emph{recording tableau},
respectively, for $w$.

The following result in~\cite{KLu79} characterises the cells in $S_n$.
%
\begin{result}
[\cite{KLu79}, see also {\cite[Theorem A]{Ari00} or \cite[Corollary 5.6]{Gec05}}]
\label{res:5c}
If $\mathcal{P}$ is a fixed standard Young tableau then
the set $\{w\in W\Colon \mathcal{P}(w)=\mathcal{P}\}$
is a left cell of $W$ and the set
$\{w\in W\Colon\mathcal{Q}(w)=\mathcal{P}\}$ is a right cell of $S_n$.
Conversely, every left cell and every right cell arises in this way.
Moreover, the two-sided cells are the subsets of $W$ of the form
$\{w\in W\Colon\,\shape\mathcal{P}(w)\mbox{ is a fixed partition.}\}$
\end{result}
\par
The \emph{shape} $\shape{\mathfrak{C}}$ of a cell $\mathfrak{C}$ is
$\shape{w}$ for any $w\in\mathfrak{C}$.
\par
For the rest of the paper, unless explicitly mentioned otherwise,
$W$ and $W'$  will be the symmetric groups $S_{n}$ on $\{1,\dots,n\}$
and $S_{n+1}$ on $\{1,\ldots,n+1\}$, respectively, with the natural
embedding.
Let $s_i=(i,i+1)$ for $1\le i\le n$,
let $S=\{s_1,\ldots,s_{n-1}\}$ and $S'=\{s_1,\ldots,s_n\}$, so that
$(W,S)$ and $(W',S')$ are Coxeter systems.
Let $\mathfrak{X}'_K$ be the set of distinguished right
coset representatives of $W_K'$ in $W'$, for any subset $
K\subseteq S'$.
Note that $W$ is embedded naturally
in $W'$ as $W'_S$.
We write $\mathfrak{X}'$ for $\mathfrak{X}_{S}'$.
Then $\mathfrak{X}'=\{x_i\colon 1\le i\le n+1\}$ where, for $1\le i\le n+1$,
$x_i=(i,i+1,\dots,n,n+1)=s_{n}\cdots s_{i}$
(the empty product is $1$, by convention).

We will describe an element $w$ of $S_n$ in different forms:
as a word in the generators $s_1$, \ldots , $s_{n-1}$,
as products of disjoint cycles on $1,\ldots,n$,
and in \emph{row-form}
$[w_1,\ldots,w_n]$ where $w_i=iw$ for $i=1,\dots,n$.
Also if $\lambda=(\lambda_1, \ldots , \lambda_r)$ is a \textit{composition}
of $n$ with $r$ \emph{parts},
we define the subset $J(\lambda)$ of $S$ to be
$S\backslash \{s_{\lambda_1},s_{\lambda_1+\lambda_2},\ldots,
s_{\lambda_1+\ldots+\lambda_{r-1}}\}$.
We make similar definitions for $S_{n+1}$ and compositions of $n+1$.

For a Young diagram $D$ corresponding to the partition
$\lambda=(\lambda_1,\dots,\lambda_r)$, let $\ic(D)$ and $\oc(D)$ be the
sets of \emph{inner corners} and \emph{outer corners}, respectively,
of $D$; that is,
\[
\begin{array}{rcl}
\ic(D) & = &
\{(i,\lambda_i))\colon 1\le i\le r-1
\mbox{ where } \lambda_{i}>\lambda_{i+1}\}
\cup
\{(r,\lambda_r)\},
\\
\oc(D) & = &
\{(1,\lambda_1+1)\}
\cup
\{(i,\lambda_i+1))\colon
2\le i\le r \mbox{ where } \lambda_{i-1}>\lambda_i\}
\cup
\{(r+1,1)\}.
\end{array}
\]
We denote by $<$ the total order on the nodes of a diagram given by
$(i,j)<(i',j')$ if, and only if, $i<i'$ or $i=i'$ and $j<j'$.
\begin{proposition}
\label{prop:2.1c}
Let $\mathfrak{C}$ be a right cell of $W$,
let $A$ be the recording tableau of elements of $\mathfrak{C}$ and
let $D$ be its underlying diagram.
For each $k\in \oc(D)$, let $A_k$ be the tableau obtained from $A$ by
adding the entry $n+1$ at node $k$ and let $\mathfrak{C}_k$ be the
right cell of $W'$ corresponding to the recording tableau $A_k$.
Then $\mathfrak{C}\mathfrak{X}'=\bigcup_{k\in\oc(D)}\mathfrak{C}_k$.
\par
Furthermore, if $k,k'\in\oc(D)$ and $k<k'$ then
$\shape{\mathfrak{C}_{k'}}\lhd\shape{\mathfrak{C}_{k}}$.
\end{proposition}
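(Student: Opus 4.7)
The plan is to prove $\mathfrak{C}\mathfrak{X}'=\bigcup_{k\in\oc(D)}\mathfrak{C}_k$ by first establishing the inclusion $\subseteq$ via a Robinson--Schensted analysis and then matching cardinalities using the branching rule for $S_n\subset S_{n+1}$; the dominance assertion is a short separate computation.

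The key reduction is that, because $x_i^{-1}$ is the cycle $(i,\,n+1,\,n,\ldots,i+1)$ and $w$ fixes $n+1$, the row form of $(wx_i)^{-1}$ equals the row form of $w^{-1}$ with the value $n+1$ inserted at position $i$. Since $\mathcal{Q}(wx_i)=\mathcal{P}((wx_i)^{-1})$ and $\mathcal{P}(w^{-1})=\mathcal{Q}(w)=A$, understanding $\mathcal{Q}(wx_i)$ reduces to the following lemma about RSK row insertion: if $u$ is a word whose unique maximum $n+1$ occurs at position $i$ and $u'$ is $u$ with that entry deleted, then $\mathcal{P}(u)$ is $\mathcal{P}(u')$ with $n+1$ placed at some outer corner of $\mathcal{P}(u')$. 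I would prove this lemma by induction on the number of insertions following the $n+1$ step. The base case (when $n+1$ is the last entry of $u$) is immediate, since $n+1$ appends to the end of row~1, which is an outer corner. In the inductive step, the two row-insertion processes (for $u$ and $u'$) agree on all insertions until, at some insertion, the bumping path reaches the row containing $n+1$ and is about to append; at that point the $n+1$-present process instead bumps $n+1$ one row down, where $n+1$ then appends, while the processes agree in all rows below. Applying the lemma gives $\mathcal{Q}(wx_i)=A_k$ for some $k\in\oc(D)$, so $wx_i\in\mathfrak{C}_k$ by Result~\ref{res:5c}, which establishes the inclusion $\mathfrak{C}\mathfrak{X}'\subseteq\bigcup_k\mathfrak{C}_k$.

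For the reverse inclusion, since $\mathfrak{X}'$ is a set of distinguished right coset representatives of $W=W'_S$ in $W'$, the map $(w,x_i)\mapsto wx_i$ is injective on $\mathfrak{C}\times\mathfrak{X}'$, so $|\mathfrak{C}\mathfrak{X}'|=(n+1)|\mathfrak{C}|$. By Result~\ref{res:5c}, $|\mathfrak{C}|$ is the number of standard Young tableaux of shape $D$, and the branching rule for $S_n\subset S_{n+1}$ yields $(n+1)|\mathfrak{C}|=\sum_{k\in\oc(D)}|\mathfrak{C}_k|$. The right cells $\mathfrak{C}_k$ are disjoint since their recording tableaux $A_k$ are pairwise distinct, so the matching of cardinalities forces the inclusion to be an equality. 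For the dominance claim, distinct outer corners of $D$ lie in distinct rows; if $k$ and $k'$ lie in rows $i_0<i_0'$ (the meaning of $k<k'$) and $\mu,\mu'$ are the respective shapes of $\mathfrak{C}_k,\mathfrak{C}_{k'}$, then $\sum_{j\le l}(\mu'_j-\mu_j)=[i_0'\le l]-[i_0\le l]$ is $\le 0$ for every $l$ and strictly negative for $i_0\le l<i_0'$, giving $\mu'\lhd\mu$.

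The main obstacle is the RSK lemma above: it is a standard structural fact, but making the induction rigorous requires careful case analysis of how the bumping path interacts with the row containing $n+1$.
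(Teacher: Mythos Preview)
Your argument is correct, but it takes a considerably more circuitous route than the paper's. The paper works directly with $wx_i$ rather than with its inverse: it observes that the row form of $wx_i$ is $[\bar w_1,\dots,\bar w_n,i]$, where $\bar w_j=w_j$ or $w_j+1$ according as $w_j<i$ or $w_j\ge i$. Since $(\bar w_1,\dots,\bar w_n)$ has the same relative order as $(w_1,\dots,w_n)$, the recording tableau after the first $n$ insertions is exactly $A$, and the final insertion places $n+1$ at some outer corner of $A$; this gives $\subseteq$ immediately, with no auxiliary RSK lemma. The same order-isomorphism idea, read backwards, gives $\supseteq$ directly: given $w'\in\mathfrak C_k$, set $i=(n+1)w'$ and note that $w=w'x_i^{-1}\in W$ has $\mathcal Q(w)=A$, so $w'\in\mathfrak C x_i$. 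Thus the paper avoids both your inductive lemma about inserting the maximum letter and the appeal to the branching rule. Your approach is not wrong---the lemma you state is a genuine and useful structural fact about RSK, and the cardinality match via the branching rule is a legitimate shortcut---but it imports more machinery than the statement requires, whereas the paper's proof is a two-line unwinding of the row form of $wx_i$.
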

\begin{proof}
Let $w=[w_1,\dots,w_n]\in\mathfrak{C}$.
Then $\mathcal{Q}(w)=A$.
Let $\mathcal{P}(w)=B$.
If $1\le i\le n+1$, then $wx_i=[\bar w_1,\dots,\bar w_n,i]$ where
$\bar w_j=w_j$ if $1\le w_j<i$ and
$\bar w_j=w_j+1$ if $i\le w_j\le n$.
In determining the tableaux associated with $wx_i$, the tableaux which
arise after the first $n$ insertions are $\bar B=Bx_i$ and $A$.
Hence, $\mathcal{Q}(wx_i)=A_k$ for some $k\in \oc(D)$.
So, $wx_i\in\mathfrak{C}_k$.
\par
On the other hand, suppose that $w'\in\mathfrak{C}_k$ where $k\in\oc(D)$.
Write $w'=[w_1',\dots,w_{n+1}']$ and let $i=w_{n+1}'$.
Following the first $n$ insertions of $w'$ the recording tableau is
$A$.
Hence, $w=w'x_i^{-1}\in S_n$ and $\mathcal{Q}(w)=A$, so that
$w\in\mathfrak{C}$ and $w'\in\mathfrak{C}x_i$.
\par
It follows that
$\mathfrak{C}\mathfrak{X}'=\bigcup_{k\in \oc(D)}\mathfrak{C}_k$.
The final sentence in the proposition is immediate.
\end{proof}
\begin{proposition}
\label{prop:2.2c}
Let $\mathfrak{C}$ be a right cell of $W'$ and
let $A$ be the recording tableau of elements of $\mathfrak{C}$
and let $D$ be its underlying diagram.
For each $k\in \ic(D)$, if $i(k)$ is the entry on the first row of $A$
removed by reverse inserting from node $k$ and $A'$ is the resulting
tableau, let $d_k=x_{i(k)}^{-1}$ and let $A_k=A'd_k$
(so that $A_k$ is a standard Young tableau on $1,\dots,n$).
Let $\mathfrak{C}_k$ be the right cell of $W$ corresponding to the
recording tableau $A_k$.
Then $\mathfrak{C}=\bigcup_{k\in \ic(D)}d_k\mathfrak{C}_k$.
\par
Furthermore, if $k,k'\in\ic(D)$ and $k<k'$ then
$\shape{\mathfrak{C}_{k}}\lhd\shape{\mathfrak{C}_{k'}}$
and $d_{k}\LEQ d_{k'}$.
\end{proposition}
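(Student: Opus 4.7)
The approach is dual to that of Proposition~\ref{prop:2.1c}: rather than building up from $W$ to $W'$ by appending an entry, we identify how to remove the entry ``$n+1$'' from an element of $W'$. Given $w'=[w_1',\ldots,w_{n+1}']\in\mathfrak{C}$, my starting point is the position $j=(w')^{-1}(n+1)$ of the value $n+1$ in the row form of $w'$. A short direct calculation shows that $u:=x_jw'$ has row form $[w_1',\ldots,w_{j-1}',w_{j+1}',\ldots,w_{n+1}',n+1]$, so $u$ fixes $n+1$ and hence lies in $W=S_n$; equivalently $w'=x_j^{-1}u$ is the unique expression of $w'$ in terms of the minimum length left-coset representative $x_j^{-1}$ of $x_j^{-1}W$, corresponding to the decomposition $W'=\bigsqcup_{i=1}^{n+1}x_i^{-1}W$.

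The core of the proof is to identify $j$ and the cell of $W$ containing $u$ in terms of the tableau data of $\mathfrak{C}$. Let $k^{*}$ denote the node of the insertion tableau $\mathcal{P}(w')$ occupied by the value $n+1$. The key claim is that (i)~$k^{*}\in\ic(D)$, (ii)~$i(k^{*})=j$, and (iii)~the recording tableau $\mathcal{Q}(u)$ computed in $S_n$ equals $A_{k^{*}}$. Part~(i) is immediate since $n+1$ is maximal, so it must sit at the last cell of both its row and its column in $\mathcal{P}(w')$, i.e.\ at a removable node. Part~(ii) is obtained by tracking the trajectory of the value $n+1$ through the Robinson--Schensted insertion of $w'$: the value $n+1$ enters the first row at step $j$ (giving label $j$ in $A$ at that position) and is subsequently pushed downward by later insertions along a path terminating at $k^{*}$; reverse row-insertion on $A$ from $k^{*}$ retraces this path in reverse and pops out the label $j$. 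For~(iii), I would compare step by step the RS processes of $w'$ and of $u$: deleting from $w'$ the step that inserts $n+1$ and relabelling all subsequent labels by ``subtract one'' yields exactly the RS algorithm for $u$, and this relabelling is precisely the effect of right-multiplying the entries of $A'$ by $d_{k^{*}}=x_j^{-1}$. Granting the claim, the assignment $w'\mapsto (k^{*},u)$ is well-defined and injective, so $\mathfrak{C}$ is contained in the disjoint union $\bigcup_{k\in\ic(D)}d_k\mathfrak{C}_k$ (disjointness is forced because the shapes $D\setminus\{k\}$ are distinct for distinct $k$); the reverse inclusion $d_k\mathfrak{C}_k\subseteq\mathfrak{C}$ is obtained by running the same RS analysis in reverse.

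For the final sentence, $\shape\mathfrak{C}_k=\shape A_k=D\setminus\{k\}$, and if $k<k'$ lie on rows $i<i'$ (they lie on distinct rows of $D$ since each row contains at most one inner corner) then a direct partial-sum comparison yields $\shape\mathfrak{C}_k\lhd\shape\mathfrak{C}_{k'}$. For the Bruhat comparison I would invoke the reduced expression $x_l^{-1}=s_ls_{l+1}\cdots s_n$, which reduces $d_k\LEQ d_{k'}$ to $i(k)\GEQ i(k')$, and then prove this (in fact strictly) by a short argument on reverse row-insertion: starting the reverse bumping from an inner corner further down in $D$ extends the chain by an extra upward step, which can only push the popped first-row entry strictly to the left and hence to a strictly smaller label. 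The main obstacle I anticipate is the careful bookkeeping of the RS bumps required to verify part~(iii) of the key claim in full generality, that is, to match the full sequence of insertions and bumpings for $w'$ and for $u$ node by node.
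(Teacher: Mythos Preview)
Your plan is correct, but it takes a different route from the paper's. The paper passes to the inverse: since $\mathcal{P}(w^{-1})=\mathcal{Q}(w)=A$ and $\mathcal{Q}(w^{-1})=\mathcal{P}(w)=B$, the node $k$ at which $B$ carries $n+1$ is precisely the cell created by the \emph{last} RS insertion step for $w^{-1}$. Undoing that single step is one reverse row-insertion on $A$ from $k$, which ejects the last letter $v_{n+1}$ of $w^{-1}$; thus $i(k)=v_{n+1}$ and $A'=\mathcal{P}\bigl([v_1,\dots,v_n]\bigr)$, so $A_k=A'd_k=\mathcal{P}(w^{-1}d_k)=\mathcal{Q}(d_k^{-1}w)$. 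This bypasses entirely the ``careful bookkeeping'' you flag for part~(iii). Your direct approach---tracking the trajectory of $n+1$ through the insertion of $w'$ and arguing that reverse-insertion on $A$ from $k^{*}$ retraces that trajectory---also works: one checks that the cell $(r,c_r)$ visited by $n+1$ at step $j_r$ satisfies $A_{r-1,c_{r-1}}=j_{r-1}$ while every cell strictly to its right in row $r-1$ carries a label $>j_r$, so the reverse bump at row $r-1$ indeed lands at $(r-1,c_{r-1})$. It is a legitimate alternative, but noticeably more laborious than the paper's one-line use of the RS symmetry.

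One correction: your parenthetical claim that $i(k)>i(k')$ strictly is false. For the shape $(2,1)$ with
\[
A=\begin{array}{cc}1&2\\3&\end{array},
\]
the two inner corners $k=(1,2)$ and $k'=(2,1)$ both give $i(k)=i(k')=2$. Only the weak inequality $i(k')\le i(k)$ holds in general (your argument, stripped of the word ``strictly'', shows exactly this: the reverse path from $k'$ reaches the row of $k$ weakly left of $k$ and then stays weakly left of the path from $k$), and that is all that is needed for $d_k\LEQ d_{k'}$.
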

\begin{proof}
Let $w\in \mathfrak{C}$.
Then $\mathcal{Q}(w)=A$.
Let $\mathcal{P}(w)=B$.
Then $\mathcal{P}(w^{-1})=A$ and $\mathcal{Q}(w^{-1})=B$.
Suppose $w^{-1}=[v_1,\dots,v_{n+1}]$.
Let $k\in\ic(D)$ be the node at which $B$ has entry $n+1$.
Let $B'$ be obtained from $B$ by removing the entry $n+1$.
Also let $A'$ be obtained from $A$ by reverse-insertion from node $k$,
and let $i(k)$ be the entry removed from the first row of $A$ by this
process.
Then $v_{n+1}=i(k)$.
Moreover, $A'$ and $B'$ are the insertion and recording tableaux,
respectively, arising from the insertion of the first $n$ entries
of the row-form of~$w^{-1}$.
Hence, $w^{-1}d_k=w^{-1}x_{i(k)}^{-1}\in W$ and
$\mathcal{P}(w^{-1}d_k)=A'd_k=A_k$.
So, $d_k^{-1}w\in W$ and $\mathcal{Q}(d_k^{-1}w)=A_k$.
That is, $w\in d_k\mathfrak{C}_k$.
\par
The preceding argument is easily seen to be reversible.
Hence, $\mathfrak{C}=\bigcup_{k\in \ic(D)}d_k\mathfrak{C}_k$.
The first part of the final sentence in the proposition is immediate.
Since the reverse insertion path from node $k'$ must pass the row of
$k$ weakly left of that node, it must remain weakly left of the
reverse insertion path from node $k$. Hence, $i(k')\le i(k)$.
So, $d_{k}=s_{i(k)}\cdots s_{n-1}s_{n}\LEQ s_{i(k')}\cdots s_{n-1}s_{n}
=d_{k'}$.
\end{proof}
\end{section}
\begin{section}
{Paths and admissible diagrams: determining the rim of certain cells}
\label{sec:3c}

We recall the generalizations of the notions of diagram and tableau,
commonly used in the basic theory, which we described in \cite{MPa15}.
A \emph{diagram} $D$ is a finite subset of $\mathbb{Z}^2$.
We will assume, where possible, that $D$ has no empty rows or columns.
These are the principal diagrams of \cite{MPa15}.
We will also assume that both rows and columns of $D$ are indexed
consecutively from 1; a node in $D$ will be given coordinates $(a,b)$ where $a$ and $b$ are the indices respectively of the row and column which the node belongs to
(rows are indexed from top to bottom and columns from left to right).
For a principal diagram $D$ we denote by $c_D$ and $r_D$ the number of columns and rows of $D$ respectively.
The \emph{row-composition} $\lambda_D$ (respectively,
\emph{column-composition} $\mu_D$) of $D$ is defined by
setting $\lambda_{D,k}$ (respectively, $\mu_{D,k}$) to be the number of nodes on
the $k$-th row (respectively, column) of $D$ for $1\le k\le r_D$ (respectively, $1\le k\le c_D$).
If $\lambda$ and $\mu$ are compositions,
we will write $\mathcal{D}^{(\lambda,\mu)}$ for the set
of (principal) diagrams $D$ with $\lambda_D=\lambda$ and $\mu_D=\mu$.
We also define $\mathcal{D}^{(\lambda)}$ to be the set
$\bigcup_{\mu\vDash n}\mathcal{D}^{(\lambda,\mu)}$ of (principal) diagrams with $\lambda_D=\lambda$.
A \emph{special diagram} is a diagram obtained from a Young diagram
by permuting the rows and columns.
Special diagrams are characterised in the following proposition.
\begin{result}[{\cite[Proposition~3.1]{MPa15}},\ %
{\rm Compare~\cite[Lemma~5.2]{DMP10}}]
\label{res:6c}
Let $D$ be a diagram. The following statements are equivalent.
(i)~$D$ is special;
(ii)~$\lambda_D''=\mu_D'$;
(iii)~for every pair of nodes $(i,j),(i',j')$ of $D$ with $i\neq i'$
and $j\neq j'$, at least one of $(i',j)$ and $(i,j')$ is also a node
of $D$.
\end{result}
If $D$ is a diagram with $n$ nodes,
a \emph{$D$-tableau} is a bijection
$t\Colon D\rightarrow \{1,\ldots,n\}$ and
we refer to $(i,j)t$, where $(i,j)\in D$, as the
$(i,j)$-\emph{entry} of $t$.
The group $W$ acts on the set of $D$-tableaux in the obvious
way---if $w\in W$,
an entry $i$ is replaced by $iw$ and $tw$ denotes the tableau
resulting from the action of $w$ on the tableau $t$.
We denote by $t^{D}$ and  $t_{D}$ the two $D$-tableaux obtained by
filling the nodes of $D$ with $1,\ldots,n$ by rows and by columns,
respectively, and we write $w_{D}$ for the element of $W$ defined by
$t^{D}w_{D}=t_{D}$.

Now let $D$ be a diagram and let $t$ be a \emph{$D$-tableau}.
We say $t$ is \emph{row-standard} if it is increasing on rows.
Similarly, we say $t$ is \emph{column-standard} if it is increasing
on columns.
We say that $t$ is \emph{standard} if $(i',j')t\le (i'',j'')t$
for any $(i',j'),(i'',j'')\in D$ with $i'\leq i''$ and $j'\leq j''$.
Note that a standard $D$-tableau is row-standard and column-standard,
but the converse is not true, in general.

\begin{result}[{\cite[Proposition 3.5]{MPa15}.
Compare \cite[Lemma 1.5]{DJa86}}]
\label{res:7c}
Let $D$ be a diagram.
Then the mapping $u\mapsto t^{D}u$ is a bijection of the set of prefixes of $w_{D}$ to the set of standard
$D$-tableaux.
\end{result}
\par
In general, an element of $W$ will have an expression of the form
$w_D$ for many different diagrams $D$ of size $n$.
The following result shows how to locate suitable diagrams.
\par
\begin{result}[{\cite[Proposition~3.7]{MPa15}}]
\label{res:8c}
Let $\lambda\vDash n$ and let $d\in \mathfrak{X}_{J(\lambda)}$.
Then $d=w_D$ for some diagram $D\in\mathcal{D}^{(\lambda)}$.
\end{result}

The proof involves the construction of a principal diagram $D(d,\lambda)$ with
the desired properties. This is formed by partitioning the row-form
of $d$ in parts of sizes corresponding to $\lambda$, placing these
parts on consecutive rows and moving the entries on the rows minimally
to make a tableau of the form $t_D$.

Now let $d\in \mathfrak{X}_{J(\lambda)}$ and denote by $\mathcal{D}^{(\lambda)}_{d}$
\label{def:diags-b}
the set of principal diagrams $D\in\mathcal{D}^{(\lambda)}$ for
which $w_D=d$.
The following result, which will turn out to be useful later on, tells us in what way two elements of $\mathcal{D}^{(\lambda)}_{d}$ can differ from one another.
\begin{result}[{\cite[Proposition~3.8]{MPa15}}]
\label{res:ischia}
Let $\lambda\vDash n$, let $d\in\mathfrak{X}_{J(\lambda)}$, let
$D=D(d,\lambda)$ and let $E\in\mathcal{D}^{(\lambda)}_{d}$.
Then the set of columns of $E$ may be partitioned into sets of
consecutive columns so that, for $j\geq 1$,
\par
\begin{tabular}{rp{5.68in}}
(i) & for any two columns in the $j$-th set, the nodes in the column
with lesser column index have row indices which are less than all the
indices of the nodes in the column with greater column index;
\\[3pt]
(ii) & the row indices of the nodes occurring in columns of the
$j$-th set are precisely the row indices of the nodes in the $j$-th
column of $D$.
\end{tabular}
\\
In particular, $D$ is the unique diagram in
$\mathcal{D}^{(\lambda)}_{d}$ with the minimum number of columns.
\end{result}

We illustrate some of the above concepts with an example.

\begin{example*}
\label{ex:4.3c}
Let $\lambda=(3^2,2,1)$ and
$d$ $=$ 
$[3,4,7,2,6,8,1,9,5]\in\mathfrak X_{J(\lambda)}$.
From the `corresponding' row-standard tableau,
$
\begin{array}{lll}
3& 4& 7\\
2& 6& 8\\
1& 9\\
5
\end{array}
$,
we produce the tableau
$
\begin{array}{llllll}
  &  & 3& 4&  & 7\\
  & 2&  &  & 6& 8\\
 1&  &  &  &  & 9\\
  &  &  & 5
\end{array}
$
using the procedure in~\cite[Proposition~3.7]{MPa15} (see comments after Result~\ref{res:8c}).
Now let
\[
D=\begin{array}{llllll}
       &       & \times& \times&       & \times\\
       & \times&       &       & \times& \times\\
 \times&       &       &       &       & \times\\
       &       &       & \times
\end{array}
\quad\mbox{and}\quad
E=
\begin{array}{llllllll}
       &       & \times& \times&       &       & \times &\\
       & \times&       &       &       & \times&        & \times\\
 \times&       &       &       &       &       &        & \times\\
       &       &       &       & \times
\end{array}.
\]
Then $D=D(d,(3^2,2,1))$ and $w_D=w_{E}=d$ (compare with Result~\ref{res:ischia}).
Note that the 4-th and 6-th sets of columns in $E$ referred to in
Result~\ref{res:ischia} are $\{4,5\}$ and $\{7,8\}$.
We also have $D=\{(1,3),\, (1,4),\, (1,6),\, (2,2),\, (2,5),\, (2,6),\, (3,1),\, (3,6),\, (4,4)\}$, $c_D=6$ and $r_D=4$.

Now let $e_1=[2,3,4,1,6,7,5,8,9]$ and $e_2=[2,5,6,1,4,7,3,8,9]$.
Then,
\[
t^De_1=
\begin{array}{llllll}
  &  & 2& 3&  & 4\\
  & 1&  &  & 6& 7\\
 5&  &  &  &  & 8\\
  &  &  & 9
\end{array}
\quad\mbox{and}\quad
t^De_2=
\begin{array}{llllll}
  &  & 2& 5&  & 6\\
  & 1&  &  & 4& 7\\
 3&  &  &  &  & 8\\
  &  &  & 9
\end{array}.
\]
As $t^De_1$ is standard but $t^De_2$ is not standard, we conclude from Result~\ref{res:7c} that $e_1$ is a prefix of $d$ while $e_2$ is not a prefix of $d$.
\end{example*}

For a composition $\lambda$ of $n$, we define the following subsets
of $\mathfrak{X}_{J(\lambda)}$ and $\mathcal{D}^{(\lambda)}$:
\begin{equation*}
\renewcommand{\arraystretch}{1.0}
\label{eqn:2c}
\begin{array}{rcl}
Z(\lambda) & = &
\{e\in\mathfrak{X}_{J(\lambda)}\colon w_{J(\lambda)}e\sim_Rw_{J(\lambda)}\},
\\
Z_s(\lambda) & = &
\{e\in Z(\lambda)\colon e=w_D
\mbox{ for some special diagram } D\in\mathcal{D}^{(\lambda)}\},
\\
Y(\lambda) & = &
\{x\in Z(\lambda)\colon x \mbox{ is not a prefix of any other }
y\in Z(\lambda)\},
\\
Y_s(\lambda) & = &
Y(\lambda)\cap Z_s(\lambda),
\\
\mathcal{E}^{(\lambda)} & = &
\{D(y,\lambda)\colon y\in Y(\lambda)\}.
\end{array}
\end{equation*}
\par
In view of Result~\ref{res:2c}(i) and (ii), $Z(\lambda)$ is closed
under the taking of prefixes and $w_{J(\lambda)}Z(\lambda)$ is the
right cell of $W$ containing $w_{J(\lambda)}$.
We denote this right cell by $\mathfrak{C}(\lambda)$.
Note that $e\in Z(\lambda)$ if, and only if,
$e\in \mathfrak{X}_{J(\lambda)}$ and
${\cal Q}(w_{J(\lambda)}e)={\cal Q}(w_{J(\lambda)})$.
\par
A knowledge of $Y(\lambda)$ leads directly to $Z(\lambda)$
by determining all prefixes.
We call $Y(\lambda)$ the \emph{rim} of the cell
$\mathfrak{C}(\lambda)$.

\begin{remark}\label{ypartition}
In the case that $\lambda$ is a partition of $n$, it follows from
\cite[Lemma~3.3]{MPa05} that
$Y(\lambda)=Y_s(\lambda)=\{w_{D}\}$, where $D$ is the Young diagram with
$\lambda_D=\lambda$, and $Z(\lambda)$ is the set of prefixes of $w_{D}$.
\end{remark}
We now show how a knowledge of the increasing subsequences
of the row-form of $w_{J(\lambda_D)}e$, where $D$ is any diagram and
$e$ is a prefix of $w_D$, helps to determine whether
$w_{J(\lambda_D)}e$ is in the same right cell as $w_{J(\lambda)}$.
\begin{lemma}
\label{lem:3.1c}
Let  $D$ be a diagram of size $n$ and let $e$ be a prefix of $w_D$.
\\[3pt]
\begin{tabular}{rp{5.68in}}
(i) &
If $(a,b)$ and $(c,d)$ are nodes of $D$ satisfying $a<c$ and
$b\le d$, then $(a,b)t^D e<(c,d)t^D e$.
\\[3pt]
(ii) &
If $(a,b)$ and $(c,d)$ are nodes of $D$ satisfying $a<c$ and
$b\le d$ and $k=(a,b)t^{D}w_{J(\lambda_D)}$ and
$l=(c,d)t^{D}w_{J(\lambda_D)}$, then $k<l$ and
$kw_{J(\lambda_D)}w_D<lw_{J(\lambda_D)}w_D$.
\\[3pt]
(iii) &
If $k$ and $l$ are integers satisfying $1\le k<l\le n$ and
$(a,b)$ and $(c,d)$ are the nodes of $D$ at which the tableau
$t^{D}$ has the entries $k w_{J(\lambda_D)}$ and $l w_{J(\lambda_D)}$,
respectively, and if $kw_{J(\lambda_D)}w_D<lw_{J(\lambda_D)}w_D$,
then $a<c$ and $b\le d$.
\end{tabular}
\end{lemma}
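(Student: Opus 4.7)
All three parts rest on three ingredients: (a) Result~\ref{res:7c}, which says that a prefix $e$ of $w_D$ is characterised by $t^De$ being a standard $D$-tableau; (b) the identity $t^Dw_D=t_D$, which converts row-fill coordinates to column-fill coordinates; and (c) the fact that $w_{J(\lambda_D)}$ is an involution that acts on the entries of $t^D$ by reversing them within each row, and therefore stabilises each ``row range'' $\{m_r+1,\ldots,m_r+\lambda_{D,r}\}$, where $m_r=\lambda_{D,1}+\cdots+\lambda_{D,r-1}$.

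For part (i), since $e$ is a prefix of $w_D$, Result~\ref{res:7c} gives that $t^De$ is standard; standardness then gives $(a,b)t^De\le(c,d)t^De$, and equality is ruled out by $(a,b)\ne(c,d)$ (which holds because $a<c$). For part (ii), I would first observe that $k=((a,b)t^D)w_{J(\lambda_D)}$ lies in row $a$'s range and $l$ lies in row $c$'s range; since $a<c$, these disjoint ranges are strictly ordered, so $k<l$. Next, $w_{J(\lambda_D)}^2=1$ together with $t^Dw_D=t_D$ gives $kw_{J(\lambda_D)}w_D=((a,b)t^D)w_D=(a,b)t_D$ and, analogously, $lw_{J(\lambda_D)}w_D=(c,d)t_D$. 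Because $t_D$ fills column-by-column (top-to-bottom within each column, columns in increasing index), the hypotheses $a<c$ and $b\le d$ force $(a,b)t_D<(c,d)t_D$: either $b<d$ places $(a,b)$ in a strictly earlier column than $(c,d)$, or $b=d$ and the same-column comparison is decided by $a<c$.

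Part~(iii) is the converse of (ii) and is where the proof requires most care. Rewriting the second hypothesis via $w_{J(\lambda_D)}^2=1$ and $t^Dw_D=t_D$ converts it into $(a,b)t_D<(c,d)t_D$, and the column-fill structure of $t_D$ then forces either $b<d$, or $b=d$ and $a<c$; in either case $b\le d$. On the other hand, $k$ lies in row $a$'s range and $l$ in row $c$'s range, and if $a>c$ then the row-$a$ range is strictly above the row-$c$ range, contradicting $k<l$; hence $a\le c$. The only remaining case to exclude is $a=c$ combined with $b<d$: but then both $k$ and $l$ lie in the same row-$a$ range, and the row-reversing action of $w_{J(\lambda_D)}$ on this range gives $k=((a,b)t^D)w_{J(\lambda_D)}>((a,d)t^D)w_{J(\lambda_D)}=l$, contradicting $k<l$. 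Thus $a<c$ as required. The main obstacle is this simultaneous extraction: $b\le d$ comes most naturally from the column-fill tableau $t_D$, while $a\le c$ must come from the row-range partition inherent in $t^Dw_{J(\lambda_D)}$, and the borderline case $a=c,\,b<d$ is ruled out by invoking the row-reversal property of $w_{J(\lambda_D)}$ once more.
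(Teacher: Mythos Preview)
Your proposal is correct and uses essentially the same ingredients as the paper's proof: Result~\ref{res:7c} for part~(i), the row-range structure of $t^Dw_{J(\lambda_D)}$ together with the standardness (equivalently, column-fill structure) of $t_D=t^Dw_D$ for parts~(ii) and~(iii). The only difference is the ordering of the case analysis in~(iii): the paper first rules out $a=c$, then $c<a$, then $b>d$, whereas you first extract $b\le d$ from $t_D$, then $a\le c$ from the row ranges, and finally eliminate $a=c$; the underlying arguments are the same.
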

\begin{proof}
(i)
Since $e$ is a prefix of $w_D$, $t=t^De$ is a standard $D$-tableau.
Hence, $(a,b)t<(c,d)t$.
That is, $(a,b)t^D e<(c,d)t^D e$.
\par
(ii)
Since $t^D$ is standard,
$kw_{J(\lambda_D)}=(a,b)t^D<(c,d)t^D=lw_{J(\lambda_D)}$.
As $kw_{J(\lambda_D)}$ is on a lower row of $t^D$ than
$lw_{J(\lambda_D)}$ and $w_{J(\lambda_D)}$ just permutes the entries
on each row of $t^D$, $k<l$.
Since $t_D=t^Dw_D$ is standard and $(a,b)t_D=kw_{J(\lambda_D)}w_D$
and $(c,d)t_D=lw_{J(\lambda_D)}w_D$,
$kw_{J(\lambda_D)}w_D<lw_{J(\lambda_D)}w_D$.
\par
(iii)
By hypothesis, $(a,b)t^D=kw_{J(\lambda_D)}$ and
$(c,d)t^D=lw_{J(\lambda_D)}$.
If $a=c$, then since $kw_{J(\lambda_D)}$ and $lw_{J(\lambda_D)}$ are
on the same row of $t^D$ and $w_{J(\lambda_D)}$ rearranges the
entries on each row of $t^D$ in decreasing order,
$lw_{J(\lambda_D)}<kw_{J(\lambda_D)}$.
As $t_D=t^Dw_D$ is a standard $D$-tableau,
$lw_{J(\lambda_D)}w_D<kw_{J(\lambda_D)}w_D$, contrary to hypothesis.
Hence, $a\neq c$.
\par
If $c<a$ then $lw_{J(\lambda_D)}<kw_{J(\lambda_D)}$.
Since the entries of $t^D$ are increasing by rows and $w_{J(\lambda_D)}$
only rearranges the entries on each row, $l<k$ contrary to hypothesis.
Hence, $a<c$.
\par
If $b>d$ then  $kw_{J(\lambda_D)}$ appears in a later column of $t^D$
than $lw_{J(\lambda_D)}$.
Hence, $kw_{J(\lambda_D)}w_D>lw_{J(\lambda_D)}w_D$, contrary to hypothesis.
So, $b\le d$.
\end{proof}

In view of the work in Schensted~\cite{Schensted1961} and Greene~\cite{Greene1974}, the preceding lemma motivates the following definition of a path in a diagram.

{\bf Definition.}
Let  $D$ be a diagram of size $n$.\\
(i) A \emph{path} of \emph{length} $m$ in $D$ is a sequence of nodes
$((a_i,b_i))_{i=1}^m$ of $D$ such that $a_{i}<a_{i+1}$ and
$b_i\le b_{i+1}$ for $i=1,\ldots,m-1$.
For $k\in\mathbb{N}$, a \emph{$k$-path} in $D$ is a sequence of $k$
mutually disjoint paths in $D$;
in particular, a $1$-path is a path.
The \emph{length} of a $k$-path is the sum of the lengths of its
constituent paths; this is the total number of nodes in the $k$-path.
\\
(ii) A $k$-path and a $k'$-path in $D$ are said to be \emph{equivalent} to one
another if they have the same set of nodes.
\\
(iii)
We say that $D$ is of \emph{subsequence type} $\nu$, where $\nu=(\nu_1,\ldots,\nu_r)\vdash n$, if the maximum length of a $k$-path in $D$ is
$\nu_1+\cdots+\nu_k$ whenever $1\le k\le r$.
(In particular, $D$ has an $r$-path containing all its nodes.)

\begin{remark}
\label{rem:3.6c}
Using the notion of \emph{$k$-increasing subsequence} of the row form
of a permutation (see, for example, \cite[Definition~3.5.1]{Sagan}),
we see from Lemma~\ref{lem:3.1c} that there is a bijection between the
set of $k$-paths in a diagram $D$ and the set of $k$-increasing
subsequences in $w_{J(\lambda_D)}w_D$, for any positive integer $k$.
\\
In particular,
the increasing subsequences occurring in the row-form of
$w_{J(\lambda_D)}w_D$ are precisely the ones which have form
$((a_i,b_i)t_D)_{i=1}^m$ for some path $((a_i,b_i))_{i=1}^m$ inside
$D$, where $(a_i,b_i)t_D$ denotes the entry of the tableau $t_D$ at
the node $(a_i,b_i)$.
\\
Also note that if $d\in \mathfrak{X}_{J(\lambda)}$ and the tableau
$t^Dd$ is standard, then the entries of $t^Dd$ appearing along paths
of $D$ give rise to increasing subsequences in the row-form of
$w_{J(\lambda_D)}d$ but in general, if $d\ne w_D$, there are other
increasing subsequences in the row-form of $w_{J(\lambda_D)}d$.
\end{remark}
\par
{\bf Example.} The diagram $D=
\begin{array}{llllll}
     .&      .& \times& \times&      .& \times\\
     .& \times&      .&      .& \times& \times\\
\times&      .&      .&      .&      .& \times\\
     .&      .&      .& \times&      .&      .\\
\end{array}
$ has maximal paths
$((3,1)$, $(4,4))$,
$((2,2)$, $(4,4))$,
$((2,2)$, $(3,6))$,
$((1,3)$, $(4,4))$,
$((1,3)$, $(2,5)$, $(3,6))$,
$((1,3)$, $(2,6)$, $(3,6))$,
$((1,4)$, $(2,5)$, $(3,6))$,
$((1,4)$, $(2,6)$, $(3,6))$,
$((1,4)$, $(4,4))$, and
$((1,6)$, $(2,6)$, $(3,6))$.
\par
In this case, $w_D=[3,4,7,2,6,8,1,9,5]$, $\lambda=(3,3,2,1)$
and $w_{J(\lambda)}w_D=[7,4,3,8,6,2,9,1,5]$.
The corresponding increasing subsequences in $w_{J(\lambda)}w_D$ are
$(1,5)$,
$(2,5)$,
$(2,9)$,
$(3,5)$,
$(3,6,9)$,
$(3,8,9)$,
$(4,6,9)$,
$(4,8,9)$,
$(4,5)$, and
$(7,8,9)$,
and may be read directly from the tableau $t^Dw_D$,
as $w_{J(\lambda)}w_D$ maps $t^Dw_{J(\lambda)}$ to $t^Dw_D$.
Thus,
\[
\begin{array}{llllll}
  &  & 3& 2&  & 1\\
  & 6&  &  & 5& 4\\
 8&  &  &  &  & 7\\
  &  &  & 9
\end{array}
\hspace{10pt}
\begin{array}{c}
\xrightarrow{w_{J(\lambda)}w_D}
\end{array}
\hspace{10pt}
\begin{array}{llllll}
  &  & 3& 4&  & 7\\
  & 2&  &  & 6& 8\\
 1&  &  &  &  & 9\\
  &  &  & 5
\end{array}
\]
The tableau
$
\begin{array}{llllll}
  &  & 2& 3&  & 4\\
  & 1&  &  & 6& 7\\
 5&  &  &  &  & 8\\
  &  &  & 9
\end{array}
$
corresponds to
$d$ $=$ $[2,3,4,1,6,7,5,8,9]$ and
\\
$w_{J(\lambda)}d$ $=$ $[4,3,2,7,6,1,8,5,9]$.
In addition to the increasing subsequences corresponding to the paths
in $D$,
$w_{J(\lambda)}d$ also has increasing subsequences such as
$(1,5)$ and $(2,7,8,9)$ which do not correspond to paths in $D$.
\par
We will be particularly interested in diagrams $D$ with subsequence
type $\lambda_D'$, which we call \emph{admissible} diagrams.
We make the following observation about paths in such diagrams.
\begin{proposition}
\label{prop:3.7c}
Let $D$ be a diagram and write
$\lambda_D'=(\lambda_1',\dots,\lambda_{r'}')$.
If $1\le u\le r'$, then a $u$-path in $D$ of length
$\sum_{1\le j\le u}\lambda_j'$ contains all $\lambda_i$ nodes on
the $i$-th row of $D$ if $\lambda_i\le u$ and exactly $u$ nodes on
all remaining rows.
\par
If for each $u$, $1\le u\le r'$, there is a $u$-path $\Pi_u$ such
that, for all $i$, $\Pi_u$ has exactly $\min\{u,\lambda_i\}$ nodes on
the $i$-th row, then $D$ is an admissible diagram.
\end{proposition}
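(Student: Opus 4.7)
The plan is to extract a single clean inequality that does all the work, and then to read off both halves from it. Specifically, write $\lambda_D=(\lambda_1,\ldots,\lambda_r)$ and let $\Pi$ be any $u$-path in $D$, say the union of $u$ pairwise disjoint paths $P_1,\ldots,P_u$. By definition of a path, each $P_j$ has strictly increasing row indices, so it contains at most one node on any given row $i$. Hence, if $n_i$ denotes the number of nodes of $\Pi$ on row $i$, we get $n_i\le u$; and trivially $n_i\le\lambda_i$. Combining, $n_i\le\min\{u,\lambda_i\}$ for every $i$.

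The key step is then the identity $\sum_{i}\min\{u,\lambda_i\}=\sum_{j=1}^{u}\lambda_j'$, which follows from the definition of the conjugate: $\sum_{j=1}^{u}\lambda_j'=\sum_{j=1}^{u}|\{i:\lambda_i\ge j\}|=\sum_i|\{j:1\le j\le u,\ \lambda_i\ge j\}|=\sum_i\min\{u,\lambda_i\}$, by swapping the order of summation. So the length of any $u$-path $\Pi$ in $D$ satisfies
\[
\mathrm{length}(\Pi)=\sum_i n_i\le\sum_i\min\{u,\lambda_i\}=\sum_{j=1}^{u}\lambda_j'.
\]

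For the first assertion, suppose $D$ has subsequence type $\lambda_D'$, so there is a $u$-path of length $\sum_{j=1}^{u}\lambda_j'$. Since each term in the inequality $n_i\le\min\{u,\lambda_i\}$ is bounded above, equality in the total forces equality term by term: $n_i=\min\{u,\lambda_i\}$ for all $i$. This is exactly the claim that the $u$-path contains all $\lambda_i$ nodes of row $i$ when $\lambda_i\le u$ and exactly $u$ nodes on every other row.

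For the second assertion, the hypothesis gives, for each $u$ with $1\le u\le r'$, a $u$-path $\Pi_u$ of length $\sum_i\min\{u,\lambda_i\}=\sum_{j=1}^{u}\lambda_j'$. By the displayed inequality this is already the maximum possible length of a $u$-path in $D$, so the maximum $u$-path length in $D$ equals $\sum_{j=1}^{u}\lambda_j'$ for every such $u$, which is precisely the statement that $D$ is admissible. There is no real obstacle here; the only thing to be careful about is that the argument uses only the definitions of path, of $k$-path, and of conjugate composition, together with the combinatorial identity above, so no further structure on $D$ is required.
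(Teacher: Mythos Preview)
Your argument is correct and follows essentially the same route as the paper: bound the number of nodes of a $u$-path on row $i$ by $\min\{u,\lambda_i\}$, identify $\sum_i\min\{u,\lambda_i\}$ with $\sum_{j\le u}\lambda_j'$ (the paper does this by counting nodes in the first $u$ columns of a $(\lambda_D,\lambda_D')$-diagram, you do it by swapping the order of summation---same identity), and read off both parts from equality in that bound. One small wording issue: the first assertion does not assume $D$ has subsequence type $\lambda_D'$; it concerns \emph{any} $u$-path of length $\sum_{j\le u}\lambda_j'$, should one exist. Your actual argument never uses the extra admissibility hypothesis, so simply delete the clause ``suppose $D$ has subsequence type $\lambda_D'$'' and argue directly for an arbitrary $u$-path of that length.
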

\begin{proof}
Write $\lambda_D=(\lambda_1,\dots,\lambda_{r})$ and let
$E$ be a $(\lambda_D,\lambda_D')$-diagram.
Let $T_1=\{i\colon\lambda_i\ge u\}$ and $T_2=\{i\colon\lambda_i< u\}$.
By counting the nodes in the first $u$ columns of $E$, we get
$u|T_1|+\sum_{i\in T_2}\lambda_i=\sum_{1\le j\le u}\lambda_j'$.
Clearly, a $u$-path in $D$ has at most $\min\{\lambda_i,u\}$ nodes
on the $i$-th row of $D$, for all $i$.
If the $u$-path has exactly $\sum_{1\le j\le u}\lambda_j'$ nodes,
these inequalities must be exact.
\par
From the first part, we see that each $\Pi_u$ has length
$\sum_{1\le j\le u}\lambda_j'$.
Hence, $D$ is an admissible diagram.
\end{proof}
\par
We have the following simple bounds on the subsequence type of a
diagram.
\begin{proposition}
\label{prop:3.8c}
If $D$ is of subsequence type $\nu$ then
$\mu_D''\unlhd\nu\unlhd \lambda_D'$.
In particular, if $D$ is special then $\nu=\lambda_D'$.
\end{proposition}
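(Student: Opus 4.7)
The plan is to establish the two dominance inequalities separately and then derive the ``special'' case from the characterisation of special diagrams in Result~\ref{res:6c}.

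For the upper bound $\nu\unlhd\lambda_D'$, the key observation is that a single path $((a_i,b_i))_{i=1}^m$ in $D$ has strictly increasing row-indices, so it contains at most one node on each row. Consequently, a $k$-path uses at most $\min\{k,\lambda_i\}$ nodes on the $i$-th row of $D$, and hence has total length at most
\[
\sum_{i=1}^{r}\min\{k,\lambda_i\}
\;=\;\sum_{i=1}^{r}|\{j\colon 1\le j\le k,\ j\le\lambda_i\}|
\;=\;\sum_{j=1}^{k}|\{i\colon \lambda_i\ge j\}|
\;=\;\sum_{j=1}^{k}\lambda_j',
\]
where $\lambda_D=(\lambda_1,\ldots,\lambda_r)$. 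Applying this to a maximum-length $k$-path gives $\nu_1+\cdots+\nu_k\le \lambda_1'+\cdots+\lambda_k'$ for $1\le k\le r'$, which is the desired dominance.

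For the lower bound $\mu_D''\unlhd\nu$, I would note that the nodes in any single column of $D$, taken in order of increasing row-index, form a path in the sense of the definition (strictly increasing rows, constant column index). Since distinct columns are disjoint, the union of any $k$ columns constitutes a $k$-path. Selecting the $k$ columns of greatest height produces a $k$-path of length $\mu_1''+\cdots+\mu_k''$, where $\mu_D''$ is the partition obtained by arranging the column-composition in decreasing order. Therefore $\nu_1+\cdots+\nu_k\ge \mu_1''+\cdots+\mu_k''$, as required.

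Finally, if $D$ is special then, by Result~\ref{res:6c}, $\lambda_D''=\mu_D'$. Both sides are partitions, so conjugating and using that conjugation is an involution on partitions yields $\lambda_D'=\lambda_D'''=\mu_D''$. The two bounds then force $\nu=\lambda_D'$.

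No step presents a genuine obstacle; the proof is essentially bookkeeping. The only point requiring care is the manipulation $\sum_i\min\{k,\lambda_i\}=\sum_{j=1}^k\lambda_j'$, which is standard but worth spelling out, and the observation that $\mu_D''$ equals the triple conjugate of~$\lambda_D$ via Result~\ref{res:6c} in the special case.
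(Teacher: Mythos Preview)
Your proof is correct and follows essentially the same approach as the paper's: columns as paths for the lower bound, the row-wise count for the upper bound, and the identity $\lambda_D'=\mu_D''$ in the special case. You are slightly more explicit in writing out the identity $\sum_i\min\{k,\lambda_i\}=\sum_{j\le k}\lambda_j'$ and in deriving $\lambda_D'=\mu_D''$ from Result~\ref{res:6c} via conjugation, but the argument is the same.
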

\begin{proof}
Let $k\ge 1$. Since each column of $D$ gives a path in $D$, the $k$
columns in $D$ with the greatest number of nodes give a $k$-path of
length $\mu_{D,1}''+\dots+\mu_{D,k}''$.
Hence, $\mu_{D,1}''+\dots+\mu_{D,k}''\le \nu_1+\cdots+\nu_k$.
So, $\mu_D''\unlhd\nu$.
\par
Since the nodes of a path in $D$ are in different rows, there are no
more than $k$ nodes of a $k$-path in any row of $D$.
For a $k$-path of greatest length we get
$\nu_1+\cdots+\nu_k\le \lambda_{D,1}'+\dots+\lambda_{D,k}'$.
So, $\nu\unlhd\lambda_D'$.
\par
If $D$ is special then $\lambda_D'=\mu_D''$. Hence, $\nu=\lambda_D'$.
\end{proof}
We now relate the subsequence type of a diagram $D$ to the shape of
the Robinson-Schensted tableau of the element $w_{J(\lambda_D)}w_D$,
and establish a criterion for this element to be in the right cell
of $w_{J(\lambda_D)}$.
\begin{theorem}
\label{thm:3.9c}
Let $D$ be a diagram of size $n$ and let $\nu$ be a partition of $n$.
\\[3pt]
\begin{tabular}{rp{5.5in}}
(i) & $\shape (w_{J(\lambda_D)}w_D)=\nu$
if, and only if, $D$ is of subsequence type $\nu$.
\\[3pt]
(ii) & $w_{J(\lambda_D)}w_D\sim_R w_{J(\lambda_D)}$
if, and only if, $D$ is  of subsequence type $\lambda_D'$.
\end{tabular}
\end{theorem}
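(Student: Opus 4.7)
My plan for part~(i) is to read the subsequence type of $D$ directly off the shape of the Robinson--Schensted $P$-tableau of $w=w_{J(\lambda_D)}w_D$, via Greene's theorem~\cite{Greene1974} combined with Remark~\ref{rem:3.6c}. Greene's theorem asserts that for any $w\in S_n$ and any $k\ge 1$, the partial sum $\shape(w)_1+\cdots+\shape(w)_k$ equals the maximum total length of a family of $k$ pairwise disjoint increasing subsequences of the row form of $w$ (equivalently, the maximum length of a $k$-increasing subsequence in the sense of Remark~\ref{rem:3.6c}). Remark~\ref{rem:3.6c} in turn provides a length-preserving bijection between $k$-paths in $D$ and $k$-increasing subsequences in the row form of $w_{J(\lambda_D)}w_D$. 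Combining the two, the maximum length of a $k$-path in $D$ equals $\shape(w)_1+\cdots+\shape(w)_k$ for every $k$. Hence $\shape(w)=\nu$ precisely when the maximum length of a $k$-path in $D$ is $\nu_1+\cdots+\nu_k$ for every $k$, which is the defining condition for $D$ to be of subsequence type $\nu$.

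For part~(ii), I begin by noting the standard fact that $\shape(w_{J(\lambda_D)})=\lambda_D'$, verified directly: inserting the decreasing blocks of the row form of $w_{J(\lambda_D)}$ in succession yields a standard Young tableau of shape $\lambda_D'$. The forward direction then follows at once from Result~\ref{res:5c}: if $w_{J(\lambda_D)}w_D\sim_R w_{J(\lambda_D)}$, the two elements share their recording tableau, hence their shape $\lambda_D'$, and part~(i) converts this into the assertion that $D$ has subsequence type $\lambda_D'$. For the converse, assume $D$ has subsequence type $\lambda_D'$; by part~(i), $\shape(w_{J(\lambda_D)}w_D)=\lambda_D'=\shape(w_{J(\lambda_D)})$, so by Result~\ref{res:5c} the two elements lie in the same two-sided cell, i.e.\ are $\sim_{LR}$-equivalent. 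Since $w_D\in\mathfrak{X}_{J(\lambda_D)}$ we have $\ell(w_{J(\lambda_D)}w_D)=\ell(w_{J(\lambda_D)})+\ell(w_D)$, so $w_{J(\lambda_D)}$ is a prefix of $w_{J(\lambda_D)}w_D$; this prefix relation yields $\LEQ_R$-comparability between the two elements (the convention is the one consistent with Result~\ref{res:2c}(i), whereby the longer element sits below the shorter in $\LEQ_R$). Since $S_n$ is crystallographic, Result~\ref{res:2c}(iii) now promotes $\sim_{LR}$ together with $\LEQ_R$-comparability to $\sim_R$, giving the desired conclusion.

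The main obstacle is this last upgrade step in the converse half of (ii): part~(i) alone only delivers equality of shapes, which is not enough to place two elements in the same right cell. The extra ingredient---the $\LEQ_R$-comparability furnished by the prefix relation, together with the crystallographic hypothesis in Result~\ref{res:2c}(iii)---is exactly what bridges from same two-sided cell to same right cell. Part~(i) itself and the forward half of (ii) are by contrast essentially clean combinatorial translations, via Greene's theorem on the one hand and the cell characterization in Result~\ref{res:5c} on the other.
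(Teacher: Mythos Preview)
Your proposal is correct and follows essentially the same route as the paper: part~(i) is Greene's theorem (the paper cites it as \cite[Theorem~3.5.3]{Sagan}) composed with the bijection of Remark~\ref{rem:3.6c}, and part~(ii) combines part~(i), the fact that $\shape(w_{J(\lambda_D)})=\lambda_D'$, the two-sided cell characterization of Result~\ref{res:5c}, the relation $w_{J(\lambda_D)}w_D\LEQ_R w_{J(\lambda_D)}$, and the crystallographic upgrade in Result~\ref{res:2c}(iii). The only stylistic difference is that the paper packages both directions of~(ii) as a single iff chain, whereas you split them; the logic is identical.
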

\begin{proof}
(i) Let $D$ be of subsequence type $\nu$ and let
$\shape (w_{J(\lambda_D)}w_D)=\tilde\nu$.
By Remark~\ref{rem:3.6c}, the maximum total length of a set of $k$
disjoint increasing subsequences in the row-form of
$w_{J(\lambda_D)}w_D$ is $\nu_1+\dots+\nu_k$, $k\ge1$.
From  \cite[Theorem~3.5.3]{Sagan},
$\shape (w_{J(\lambda_D)}w_D)=\nu$.
Hence, $\tilde\nu=\nu$.
\par
(ii) From (i), $D$ is of subsequence type $\lambda_D'$
if, and only if, $\shape (w_{J(\lambda_D)}w_D)=\lambda_D'$.
Since $\shape (w_{J(\lambda_D)})=\lambda_D'$,
we get $\shape (w_{J(\lambda_D)}w_D)=\lambda_D'$ if, and only if,
$w_{J(\lambda_D)}w_D)$ $\sim_{LR}$ $w_{J(\lambda_D)}$.
Since $w_{J(\lambda_D)}w_D$ $\LEQ_{R}$ $w_{J(\lambda_D)}$, we get
$w_{J(\lambda_D)}w_D$ $\sim_{LR}$ $w_{J(\lambda_D)}$
if, and only if,
$w_{J(\lambda_D)}w_D$ $\sim_{R}$ $w_{J(\lambda_D)}$,
by Lemma~\ref{res:2c}(iii).
\end{proof}
\begin{corollary}
\label{cor:3.10c}
If $D$ is a special diagram then
$w_{J(\lambda_D)}w_D\sim_R w_{J(\lambda_D)}$.
\end{corollary}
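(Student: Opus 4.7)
The plan is to derive this as an immediate consequence of the two results that precede it, namely Proposition~\ref{prop:3.8c} and Theorem~\ref{thm:3.9c}(ii). The corollary sits at exactly the intersection of these statements: Proposition~\ref{prop:3.8c} tells us how being \emph{special} constrains the subsequence type, and Theorem~\ref{thm:3.9c}(ii) translates a specific subsequence type into a cell-equivalence statement.

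First I would invoke Proposition~\ref{prop:3.8c}. If $\nu$ denotes the subsequence type of $D$, the general bound $\mu_D''\unlhd\nu\unlhd\lambda_D'$ holds, and the final sentence of that proposition asserts that when $D$ is special we automatically have $\lambda_D'=\mu_D''$, which forces $\nu=\lambda_D'$. Thus the hypothesis that $D$ is special feeds directly into the conclusion that $D$ is of subsequence type exactly $\lambda_D'$.

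Next I would apply Theorem~\ref{thm:3.9c}(ii), which gives an ``if and only if'' between $D$ being of subsequence type $\lambda_D'$ and $w_{J(\lambda_D)}w_D \sim_R w_{J(\lambda_D)}$. Combining this with the previous step immediately delivers the desired cell-equivalence and completes the proof.

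There is really no main obstacle here: the corollary is a direct logical chaining of two already-established results. The only conceptual point worth noting explicitly in the write-up is the characterization of specialness used---namely the equivalent condition $\lambda_D''=\mu_D'$ of Result~\ref{res:6c}, which (after taking conjugates) is what guarantees equality in the subsequence-type bounds. Everything else is a one-line deduction.
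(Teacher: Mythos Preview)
Your proposal is correct and matches the paper's own proof essentially verbatim: the paper simply states that the corollary follows from Proposition~\ref{prop:3.8c} and Theorem~\ref{thm:3.9c}, which is exactly the two-step chaining you describe.
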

\begin{proof}
This follows from Proposition~\ref{prop:3.8c} and
Theorem~\ref{thm:3.9c}.
\end{proof}
\par
We can also deduce from Theorem~\ref{thm:3.9c} that
$\nu\unlhd\lambda_D'$, using the fact that
$w_{J(\lambda_D)}w_D\LEQ_R w_{J(\lambda_D)}$ and that
$\shape(x)\unlhd\shape(y)$ if $x,y\in S_n$ and $x\LEQ_R y$
by \cite[Theorem~5.1]{Gec05} from which it follows that
$\nu$
$=\shape(w_{J(\lambda_D)}w_D)\unlhd\shape(w_{J(\lambda_D)})$
$=\lambda_D'$.
\par
Note that Theorem~\ref{thm:3.9c} (ii) is trivially equivalent to
``$w_D\in Z(\lambda_D)$ if, and only if,
$D$ is  of subsequence type $\lambda_D'$''.
In particular, $w_D\in Z(\lambda_D)$ whenever $D$ is special.
\par
We say that a diagram $D$ is {\it admissible}
if it is of subsequence type $\lambda_D'$.
\par
{\bf Example.} Consider
$D=
\begin{array}{llll}
      &\times&\times&\times\\
\times&\times&      &      \\
      &      &\times&      \\
\end{array}$.
Then $\mu_D''=(2,2,1,1)$, $\lambda_D'=(3,2,1)$ and
$D$ is of subsequence type $(3,1,1,1)$.
Consequently, $w_D\not\in Z(\lambda_D)$ since $D$ is not admissible.
%
%
%
\begin{proposition}
\label{prop:3.12c}
Let $\lambda\vDash n$ and let $D\in\mathcal{D}^{(\lambda)}$ be an
admissible diagram.
Then $w_D\notin Y(\lambda)$ if, and only if, there exists an
admissible diagram $E\in\mathcal{D}^{(\lambda)}$
such that $w_D\neq w_E$ and $t^Ew_D$ is a standard $E$-tableau.
\end{proposition}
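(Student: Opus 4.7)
The plan is to reduce the statement directly to a chain of three previously established results: the prefix characterization of standard $D$-tableaux (Result~\ref{res:7c}), the realization of every element of $\mathfrak{X}_{J(\lambda)}$ as $w_E$ for some $E \in \mathcal{D}^{(\lambda)}$ (Result~\ref{res:8c}), and the admissibility criterion for membership in $Z(\lambda)$ (Theorem~\ref{thm:3.9c}(ii)).

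First I would unpack the condition $w_D \notin Y(\lambda)$. Since $D$ is admissible, Theorem~\ref{thm:3.9c}(ii) gives $w_D \in Z(\lambda)$, so by the very definition of $Y(\lambda)$ the failure $w_D \notin Y(\lambda)$ is equivalent to the existence of some $y \in Z(\lambda)$ with $y \neq w_D$ of which $w_D$ is a (necessarily proper) prefix. Thus the proposition reduces to showing that such a $y$ exists if, and only if, there exists an admissible $E \in \mathcal{D}^{(\lambda)}$ with $w_E \neq w_D$ and $t^E w_D$ standard.

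For the forward direction, given such a $y$, note $y \in Z(\lambda) \subseteq \mathfrak{X}_{J(\lambda)}$, so by Result~\ref{res:8c} we may write $y = w_E$ for some $E \in \mathcal{D}^{(\lambda)}$. Since $w_{J(\lambda)} w_E = w_{J(\lambda)} y \sim_R w_{J(\lambda)}$, Theorem~\ref{thm:3.9c}(ii) forces $E$ to be admissible. The hypothesis that $w_D$ is a prefix of $w_E$ then translates, via Result~\ref{res:7c}, into $t^E w_D$ being a standard $E$-tableau, with $w_E \neq w_D$ by assumption.

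For the reverse direction, given an admissible $E \in \mathcal{D}^{(\lambda)}$ with $w_E \neq w_D$ and $t^E w_D$ standard, Result~\ref{res:7c} applied to $E$ shows that $w_D$ is a prefix of $w_E$; admissibility of $E$ together with Theorem~\ref{thm:3.9c}(ii) places $w_E$ in $Z(\lambda)$; and $w_D \neq w_E$ ensures the prefix is proper, giving $w_D \notin Y(\lambda)$. There is no serious obstacle here: the only point requiring care is the bookkeeping that ensures $w_E$ lies in $\mathfrak{X}_{J(\lambda)}$ (which holds because $w_E$ is by construction the minimum-length double coset representative in $W_{J(\lambda)} w_E W_{J(\mu_E)}$, hence in particular a distinguished right coset representative of $W_{J(\lambda)}$), so that the equivalence $w_E \in Z(\lambda) \Leftrightarrow E$ admissible is legitimate via Theorem~\ref{thm:3.9c}(ii).
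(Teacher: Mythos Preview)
Your proof is correct and follows essentially the same route as the paper's: both directions hinge on Result~\ref{res:7c} for the prefix/standard-tableau correspondence and Theorem~\ref{thm:3.9c}(ii) for the equivalence of admissibility and membership in $Z(\lambda)$. The only cosmetic difference is that in the ``only if'' direction the paper chooses a one-step extension $w_Ds$ (with $s\in S$, $l(w_Ds)=l(w_D)+1$, $w_Ds\in Z(\lambda)$) and sets $E=D(w_Ds,\lambda)$, whereas you take an arbitrary $y\in Z(\lambda)$ with $w_D$ a proper prefix and invoke Result~\ref{res:8c}; both choices work equally well.
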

\begin{proof}
By hypothesis and Theorem~\ref{thm:3.9c} (ii), $w_D\in Z(\lambda)$.
\par
For the `if' part:
In this case,
Since $t^Ew_D$ is a standard $E$-tableau, $w_D$ is a prefix of $w_E$.
Moreover, $w_E\in Z(\lambda)$ since $E$ is admissible.
Since $w_D\neq w_E$, $w_D\not\in Y(\lambda)$.
\par
For the `only if' part:
Since $w_D\notin Y(\lambda)$, there is an $s\in S$ such that
$l(w_Ds)=l(w_D)+1$ and $w_Ds\in Z(\lambda)$.
Let $E=D(w_Ds,\lambda)$.
Clearly, $E$ is admissible and $w_E=w_Ds\neq w_D$.
Since $w_D$ is a prefix of $w_E$, $t^Ew_D$
is a standard $E$-tableau.
This completes the proof.
\end{proof}
\par
{\bf Example.} Consider the diagrams $D$ and $E$ and the standard $E$-tableau $t$
where\newline
$D=\begin{array}{lll}
      &\times&      \\
\times&\times&      \\
\times&\times&\times\\
      &\times&
\end{array}$
and
$E=
\begin{array}{llll}
      &      &\times&      \\
      &\times&\times&      \\
\times&      &\times&\times\\
      &      &\times&      \\
\end{array}$
and
$t=
\begin{array}{llll}
 & &3& \\
 &1&4& \\
2& &5&7\\
 & &6& \\
\end{array}$.
\newline
We then get that $w_D \in Z(\lambda_D)\setminus Y(\lambda_D)$
in view of Proposition~\ref{prop:3.12c}
(note that $\lambda_D=\lambda_E$ and both $D$ and $E$ are of
subsequence type $(4,2,1)=\lambda_D'$).

The \emph{reverse} composition $\dot\lambda$ of a composition
$\lambda=(\lambda_1$, $\dots$, $\lambda_r)$ is the composition
$(\lambda_r$, $\dots$, $\lambda_1)$ obtained by reversing the order of
the entries.
For a principal diagram $D\in\mathcal{D}^{(\lambda)}$,
the diagram $\dot D\in\mathcal{D}^{(\dot\lambda)}$ is the diagram
obtained by rotating $D$ through $180^{\circ}$.
If $D\in\mathcal{D}^{(\lambda,\mu)}$,
then $\dot D\in\mathcal{D}^{(\dot\lambda,\dot\mu)}$.
Since rotating $D$ through $180^{\circ}$ maps $k$-paths into $k$-paths,
for any $k$, $D$ and $\dot D$ have the same subsequence type.
\begin{proposition}
\label{thm:3.13c}
Let $\lambda\vDash n$. Then there is a natural bijection
$Y(\lambda)\rightarrow Y(\dot\lambda)$ given by $y\mapsto w_0 y w_0$,
where $w_0$ is the longest element in $W$, which extends to a
bijection $Z(\lambda)\rightarrow Z(\dot\lambda)$ and restricts to a
bijection $Y_s(\lambda)\rightarrow Y_s(\dot\lambda)$.
Moreover, $\mathfrak{C}(\dot\lambda)=w_0\mathfrak{C}(\lambda)w_0$.
\end{proposition}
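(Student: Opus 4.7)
The plan is to work with the conjugation map $\phi\colon W\to W$ defined by $\phi(w)=w_0ww_0$. Since $w_0s_iw_0=s_{n-i}$, $\phi$ is an automorphism of the Coxeter system $(W,S)$: it preserves length, sends $S$ to $S$, and carries reduced expressions to reduced expressions, so in particular it preserves the prefix relation. A direct unpacking of the definition of $J(\lambda)$ shows $\phi(J(\lambda))=J(\dot\lambda)$, from which $\phi(W_{J(\lambda)})=W_{J(\dot\lambda)}$ and $\phi(w_{J(\lambda)})=w_{J(\dot\lambda)}$ follow; together with length-preservation and the observation that $\phi$ sends right cosets of $W_{J(\lambda)}$ to right cosets of $W_{J(\dot\lambda)}$, this forces $\phi$ to restrict to a length-preserving bijection $\mathfrak X_{J(\lambda)}\to\mathfrak X_{J(\dot\lambda)}$.

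The next step uses that $\phi$, being a Coxeter system automorphism, lifts to an algebra automorphism $\widetilde\phi$ of $\mathcal H$ given by $T_w\mapsto T_{\phi(w)}$. The uniqueness of the $C$-basis characterization (Result~\ref{res:1c}) forces $\widetilde\phi(C_w)=C_{\phi(w)}$, so $\phi$ preserves the preorder $\LEQ_R$ and hence the equivalence $\sim_R$. Therefore $\phi(\mathfrak C(\lambda))$ is the right cell containing $\phi(w_{J(\lambda)})=w_{J(\dot\lambda)}$, namely $\mathfrak C(\dot\lambda)$; this yields the ``Moreover'' statement $\mathfrak C(\dot\lambda)=w_0\mathfrak C(\lambda)w_0$. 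Writing $\mathfrak C(\lambda)=w_{J(\lambda)}Z(\lambda)$ and applying $\phi$, the identity $w_{J(\dot\lambda)}\phi(Z(\lambda))=\mathfrak C(\dot\lambda)=w_{J(\dot\lambda)}Z(\dot\lambda)$ left-cancels to $\phi(Z(\lambda))=Z(\dot\lambda)$, and prefix-preservation further restricts $\phi$ to a bijection $Y(\lambda)\to Y(\dot\lambda)$.

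For the restriction to $Y_s$, what remains is to show $\phi(Z_s(\lambda))=Z_s(\dot\lambda)$, and for this the plan is to establish the identity $\phi(w_D)=w_{\dot D}$ for every principal diagram $D\in\mathcal D^{(\lambda)}$, together with the fact that $\dot D$ is special iff $D$ is. The latter is immediate from the local three-node characterization in Result~\ref{res:6c}(iii), since the $180^\circ$ rotation $(i,j)\mapsto(r_D+1-i,c_D+1-j)$ that identifies $D$ with $\dot D$ manifestly preserves that condition. For the former, I would check by tracking the row-by-row and column-by-column fillings under the rotation that $(r_D+1-i,c_D+1-j)t^{\dot D}=n+1-(i,j)t^D$ and similarly for $t_D$ versus $t_{\dot D}$; since $w_0$ acts on $\{1,\dots,n\}$ by $k\mapsto n+1-k$, the equality $w_0w_Dw_0=w_{\dot D}$ follows at once.

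The main obstacle is this last identity $\phi(w_D)=w_{\dot D}$: while conceptually straightforward, the bookkeeping of indices under the $180^\circ$ rotation requires some care, especially since the row- and column-sizes of $D$ are permuted differently. By contrast, the conceptually non-trivial ingredient---that $\phi$ respects $\sim_R$---is essentially just the standard remark that a Coxeter system automorphism permutes the Kazhdan-Lusztig $C$-basis, and everything else reduces to routine cell-theoretic bookkeeping together with prefix-preservation.
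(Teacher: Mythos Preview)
Your argument is correct but the key step differs from the paper's. You obtain $\phi(Z(\lambda))=Z(\dot\lambda)$ by first lifting the Coxeter automorphism $\phi$ to a Hecke algebra automorphism that permutes the $C$-basis and hence preserves $\sim_R$, so $\phi(\mathfrak C(\lambda))=\mathfrak C(\dot\lambda)$, and then left-cancelling $w_{J(\dot\lambda)}$. The paper instead stays within the combinatorial framework it has developed: for $y=w_D\in Y(\lambda)$ it uses that $D$ is admissible (Theorem~\ref{thm:3.9c}(ii)), observes that the $180^{\circ}$ rotation sends $k$-paths to $k$-paths so $\dot D$ has the same subsequence type and is again admissible, and concludes $w_{\dot D}\in Z(\dot\lambda)$ directly from Theorem~\ref{thm:3.9c}(ii); the bijection on $Z$ then follows from prefix closure and the identity $\mathfrak C(\dot\lambda)=w_0\mathfrak C(\lambda)w_0$ is derived at the end as a consequence rather than an input. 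Both routes use the identity $w_0w_Dw_0=w_{\dot D}$ (which the paper simply asserts) and the fact that $D$ is special iff $\dot D$ is. Your approach is cleaner and more general---it would apply to any automorphism of the Coxeter system---whereas the paper's approach has the advantage of being entirely self-contained and of exercising the admissibility machinery just built.
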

\begin{proof}
Let $y\in Y(\lambda)$ and write $y=w_D$ where
$D\in\mathcal{D}^{(\lambda)}$.
Then $D$ is admissible by Theorem~\ref{thm:3.9c}(ii).
Hence, $\dot D\in\mathcal{D}^{(\dot\lambda)}$ is admissible.
Let $\dot y=w_{\dot D}$.
Then $\dot y\in Z(\dot\lambda)$ by Theorem~\ref{thm:3.9c}(ii).
Also, $\dot y=w_0 y w_0$.
If $\dot y\not\in Y(\dot\lambda)$, then $\dot y$ is a proper prefix of
some $z\in Y(\dot\lambda)$.
So, $y$ is a proper prefix of $\dot z$, where $\dot z=w_0zw_0$.
By the preceding argument, $\dot z\in Z(\lambda)$.
This contradicts the fact that $y\in Y(\lambda)$.
Hence, $\dot y\in Y(\dot \lambda)$.

If $w$ is an arbitrary prefix of $y$, it is immediate that $w_0ww_0$
is a prefix of $\dot y$.
Since $Z(\lambda)$ is the set of all prefixes of elements of
$Y(\lambda)$, the second bijection is established.
The third bijection comes from the fact that the diagram $D$ is
special if, and only if, $\dot D$ is special.

Finally, $\mathfrak{C}(\dot\lambda)$
$=w_{J(\dot\lambda)}Z(\dot\lambda)$
$=w_0w_{J(\lambda)}w_0w_0Z(\lambda)w_0$
$=w_0\mathfrak{C}(\lambda)w_0$.
\end{proof}

Note that if $\rho$ is the representation of $S_n$ corresponding to
the cell $\mathfrak{C}(\lambda)$, then the representation corresponding
to $\mathfrak{C}(\dot\lambda)$ is given by $s_i\mapsto s_{n-i}\rho$
for all $i$.

\end{section}
%

%
%

When $\lambda$ is a partition we have seen in Remark~\ref{ypartition} that $Y(\lambda)=\{w_D\}$ where $D$ is the Young diagram corresponding to $\lambda$.
It then follows from Proposition~\ref{thm:3.13c} that in the case of a composition~$\mu$ such that $\dot\mu$ is a partition, we have $Y(\mu)=\{w_0w_Ew_0\}$, where $E$ is the Young diagram corresponding to $\dot\mu$.

However, determining $Y(\lambda)$ for an arbitrary composition $\lambda$ turns out to be much more complicated in general.
We conclude this section with two propositions in which $Y(\lambda)$ is determined for two families of compositions.
We will first deal with compositions which are rearrangements of hook partitions and in this case special diagrams will turn out to play an important part. 
Recall that if $\lambda$, $\mu$ are compositions of $n$ with $\mu''=\lambda'$, then there is a unique diagram $D$ with $\lambda_D=\lambda$ and $\mu_D=\mu$ (such a diagram $D$ is special).

\begin{proposition}
\label{thm:5.1c}
Let $\lambda\vDash n$ and suppose $\lambda$ is a rearrangement of the hook partition $m^11^{r-1}$ where $m>1$ and $r\ge3$.
Suppose further that none of $\lambda$ or $\dot\lambda$ is a partition.
Then $Y(\lambda)=\{w_D:$ $D\in\mathcal{D}^{(\lambda)}$ and $\mu''_D=\lambda'\}$.
In particular, $Y(\lambda)=Y_s(\lambda)$ and $|Y(\lambda)|=m$.
\end{proposition}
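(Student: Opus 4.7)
The plan is to identify the $m$ special diagrams with row composition $\lambda$ and show that they supply exactly the elements of $Y(\lambda)$. First, the hypotheses on $\lambda$ force $\lambda=(1^a,m,1^b)$ with $a,b\geq 1$, so the long row sits at position $p=a+1$ with $2\leq p\leq r-1$ and $\lambda'=(r,1^{m-1})$. If $D\in\mathcal{D}^{(\lambda)}$ is special then $\mu_D$ is a rearrangement of $(r,1^{m-1})$, so $D$ has one column of length $r$ (the ``long column'') and $m-1$ singleton columns; the $r-1$ short-row nodes must all sit in the long column, the long row occupies the $m-1$ singleton columns together with one slot on the long column, and principality forces the columns to be $\{1,\dots,m\}$ with the long column at some $c\in\{1,\dots,m\}$. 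This yields exactly $m$ special diagrams $E_1,\dots,E_m$; since $E_c$ has the minimum possible $m$ columns, Result~\ref{res:ischia} makes it the unique minimum-column diagram in $\mathcal{D}^{(\lambda)}_{w_{E_c}}$, whence the $w_{E_c}$ are pairwise distinct.

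The core technical step will be to prove two standardness criteria. Let $F\in\mathcal{D}^{(\lambda)}$ be admissible with short-row columns $c_i^F$ ($i\neq p$, non-decreasing in $i$) and long-row columns $b_1^F<\cdots<b_m^F$. I will show (a)~$t^{E_c}w_F$ is a standard $E_c$-tableau if and only if $b_c^F\in[c_{p-1}^F,c_{p+1}^F]$, and (b)~if $E\in\mathcal{D}^{(\lambda)}$ is admissible and $t^Ew_{E_c}$ is standard, then the columns of $E$ satisfy the chain
\[
b_1^E<\cdots<b_{c-1}^E<c_1^E\leq\cdots\leq c_{p-1}^E\leq b_c^E\leq c_{p+1}^E\leq\cdots\leq c_r^E<b_{c+1}^E<\cdots<b_m^E.
\]
In both cases one expands the entries of the tableau in question and exploits (i) the distinguished coset property of $w_F\in\mathfrak{X}_{J(\lambda)}$ (or the explicit row-form of $w_{E_c}$) to control the long-row chunk, and (ii) admissibility, via comparison of column positions of nodes in $t_F$ (respectively $t_E$), to control the short-row chunks. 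Every intra-row, intra-column and ``same-block'' standardness inequality becomes automatic and the only remaining binding constraints are the cross-comparisons around column $c$, which collapse to the interval in (a) and to the chain in (b).

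With (a) and (b) in hand, the two inclusions follow. For $w_{E_c}\in Y(\lambda)$: Corollary~\ref{cor:3.10c} gives $w_{E_c}\in Z(\lambda)$; an admissible $E$ with $w_E\neq w_{E_c}$ and $t^Ew_{E_c}$ standard, as Proposition~\ref{prop:3.12c} would provide, must satisfy the chain in (b), and principality then forces its distinct column values to exhaust $\{1,\dots,c_E\}$, giving a column partition of $E$ that matches exactly the partition Result~\ref{res:ischia} attaches to the minimum-column diagram $E_c$, so $w_E=w_{E_c}$, a contradiction. Conversely, if $y=w_F\in Y(\lambda)$ with $F=D(y,\lambda)$ admissible by Theorem~\ref{thm:3.9c}(ii), admissibility of $F$ supplies some $c$ with $b_c^F\in[c_{p-1}^F,c_{p+1}^F]$; by (a), $t^{E_c}w_F$ is standard, so $w_F$ is a prefix of $w_{E_c}\in Z(\lambda)$, and $y\in Y(\lambda)$ forces $w_F=w_{E_c}$. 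Thus $Y(\lambda)=\{w_{E_1},\dots,w_{E_m}\}=Y_s(\lambda)$, of size $m$.

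The hard part will be the systematic verification of (a) and (b): one must trace precisely how each standardness inequality reduces through the structure of $t_F$ (or $t_E$) and check that the binding constraints really do collapse to the stated interval and chain, with no stray conditions surviving from the distinguished or admissibility hypotheses. Once these reductions are in place, the application of Result~\ref{res:ischia} to upgrade the chain into equality of $w$-elements, and the use of Proposition~\ref{prop:3.12c} to transfer prefixes to $Y(\lambda)$, are routine.
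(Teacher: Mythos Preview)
Your approach is correct in outline, but it takes a substantially longer route than the paper's.

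The paper argues as follows. Take $y\in Y(\lambda)$ and $D=D(y,\lambda)$. Admissibility forces a length-$r$ path $\Pi=((i,b_i))_{i=1}^{r}$ with $b_1\le\cdots\le b_r$. If $b_l<b_{l+1}$ for some $l$, one performs a single local modification: move the short-row node at $(l+1,b_{l+1})$ to $(l+1,b_l)$ (when $l\ge k$) or the node at $(l,b_l)$ to $(l,b_{l+1})$ (when $l<k$). The resulting diagram $E$ is still in $\mathcal{D}^{(\lambda)}$, is visibly admissible (its subsequence type still begins with $r$, and $\nu\unlhd\lambda'$ then forces $\nu=\lambda'$), has $t^{E}w_D$ standard, and has $w_E\ne w_D$. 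Proposition~\ref{prop:3.12c} now contradicts $y\in Y(\lambda)$. Hence all $b_i$ coincide and $D$ is special. The reverse inclusion is dispatched in one line: there are exactly $m$ special diagrams in $\mathcal{D}^{(\lambda)}$, each $w_{E_c}\in Z(\lambda)$ by Corollary~\ref{cor:3.10c}, and since $1<k<r$ no $w_{E_c}$ is a prefix of another (Result~\ref{res:7c}); as every element of $Z(\lambda)$ is a prefix of some element of $Y(\lambda)$, each $w_{E_c}$ must itself lie in $Y(\lambda)$.

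Your plan instead builds two explicit standardness criteria (a) and (b) and applies them through Proposition~\ref{prop:3.12c} in both directions. This works, and (a) in particular gives extra information (it describes exactly which $w_F$ are prefixes of a given $w_{E_c}$), but the verification of (a) and (b) that you flag as ``the hard part'' is real work that the paper avoids entirely. The paper's local-move argument replaces your criterion (b) completely, and its prefix-incomparability observation replaces your use of Result~\ref{res:ischia} to force $w_E=w_{E_c}$. In short: both proofs are valid, but the paper's is a two-paragraph argument where yours would be a two-page one.
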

\begin{proof}
Assume the hypothesis.
Then $\lambda=(\lambda_1,\ldots,\lambda_r)$ where $\lambda_k=m$ for some $k$ with $1<k<r$ and $\lambda_i=1$ for all
$i\in\{1,\ldots,r\}\setminus\{k\}$.
Let $y\in Y(\lambda)$ and let $D=D(y,\lambda)$.
It follows that $D$ has a path $\Pi$ of length $r$ which necessarily
has a node on each row of $D$.
Let $(i,b_i)$, $1\le i\le r$, be the nodes of $\Pi$.
From the definition of path, $b_i\le b_{i+1}$ for $1\le i\le r-1$.
Now suppose that $b_l<b_{l+1}$ for some $l\in\{1,\ldots,r-1\}$.
If $l\ge k$ (resp. $l<k$) consider the diagram $E$ obtained from~$D$
by removing the node in position $(l+1,b_{l+1})$, (resp. $(l,b_l)$)
and introducing a node in position $(l+1,b_l)$, (resp. $(l,b_{l+1})$).
Clearly, $\lambda_E=\lambda$.
Since all nodes of $D$ which are not on the $k$-th row are on the path
$\Pi$, $t^Ew_D$ is standard.
The subsequence type $\nu$ of $E$ has $r$ as its first entry and
satisfies $\nu\unlhd\lambda'$ by Proposition~\ref{prop:3.8c}.
Hence, $\nu=\lambda'$ (that is, $E$ is admissible).
Clearly, $w_E\ne w_D$ so by Proposition~\ref{prop:3.12c}, $w_D\not\in Y(\lambda)$, contrary to hypothesis.
Hence, $b_i=b_{i+1}$ for $1\le i\le r-1$.
We conclude that $D$ is a special diagram.

For each $\lambda$ there are $m\ (=n-r+1)$ principal special diagrams.
Moreover, our hypothesis that $1<k<r$ ensures that the corresponding $w_D$
are not prefixes of one another by Result~\ref{res:7c}.
The required result now follows easily.
\end{proof}

We remark here that Proposition~\ref{thm:5.1c} follows from~\cite[Proposition 5.2]{MPa15}.
However, its proof is done in the spirit of the techniques developed in this paper.

\bigskip
Next we deal with a family of compositions $\lambda=\lambda(r)$ for which, as it turns out, $Y(\lambda)\ne Y_s(\lambda)$ for $r\ge 4$.

\begin{proposition}\label{prop:3.10n}
Let $r\ge 3$ and let $\lambda=(2,1^{r-2},2)\vDash n=r+2$.
Then $Y(\lambda)=\{w_{D(a)}:\ 2\le a\le r\}$ where, for $2\le a\le r$, $D(a)$ is the diagram defined by $D(a)=\{(i,1):$ $1\le i\le a-1\}$ $\cup$ $\{(i,2):$ $a\le i\le r\}$ $\cup$ $\{(r,1),\, (1,2)\}$.
\end{proposition}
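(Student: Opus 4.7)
The plan is to establish both inclusions of the claimed equality.

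For the forward inclusion, I would take $y \in Y(\lambda)$ and set $D = D(y, \lambda)$, so $w_D = y$. By Theorem~\ref{thm:3.9c}(ii), $D$ is admissible, i.e.\ of subsequence type $\lambda' = (r, 2)$. Proposition~\ref{prop:3.8c} gives $\mu_D'' \unlhd (r, 2)$; combined with $|\mu_D''| = n = r + 2$, this forces $\mu_D''$ to have at most two parts, so $D$ has at most two columns. Since row~$1$ contains two nodes, $D$ has exactly two columns; so rows~$1$ and~$r$ occupy both columns, while each middle row $2 \le i \le r - 1$ holds a unique node in column~$1$ or column~$2$. A path of length $r$ in $D$ must have one node per row, and on middle rows that node is forced; for the path's column labels to be non-decreasing, the columns used by the middle-row nodes (read from row $2$ to row $r-1$) must form a non-decreasing sequence. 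Hence there is a threshold $a \in \{2, \ldots, r\}$ for which the middle-row node lies in column~$1$ when $i < a$ and in column~$2$ when $a \le i \le r - 1$, giving $D = D(a)$.

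For the reverse inclusion, I would first verify that each $D(a)$ is admissible. The nodes $(1,1), (2,1), \ldots, (a-1,1), (r,1)$ of column~$1$ form a path of length $a$, and the nodes $(1,2), (a,2), \ldots, (r,2)$ of column~$2$ form a path of length $r - a + 2$; together they constitute a $2$-path of total length $n$ with exactly $\min\{2, \lambda_i\}$ nodes on each row~$i$. Together with the path $(1,1), (2,1), \ldots, (a-1,1), (a,2), \ldots, (r,2)$ of length $r$, Proposition~\ref{prop:3.7c} implies that $D(a)$ has subsequence type $(r, 2) = \lambda'$, whence $w_{D(a)} \in Z(\lambda)$ by Theorem~\ref{thm:3.9c}(ii).

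To conclude $w_{D(a)} \in Y(\lambda)$, I would compute $l(w_{D(a)}) = r - 1$ independently of $a$, either by writing down the row-form of $w_{D(a)}$ directly (position~$2$ is $a+1$, position~$r+1$ is $a$, and the remaining entries shift in an obvious way) and counting inversions, or by counting node pairs $\alpha = (a_1, b_1)$, $\beta = (a_2, b_2)$ in $D(a)$ with $a_1 < a_2$ and $b_1 > b_2$: any such pair must satisfy $b_1 = 2, b_2 = 1$, so $(1,2)$ contributes $a - 1$ pairs (with the column-$1$ nodes at rows $2, \ldots, a-1, r$), and each $(i, 2)$ with $a \le i \le r - 1$ contributes exactly one pair (with $(r, 1)$), giving $(a-1) + (r-a) = r-1$. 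The $r-1$ diagrams $D(a)$ are distinct two-column diagrams in $\mathcal{D}^{(\lambda)}$, so by Result~\ref{res:ischia} they correspond to $r-1$ distinct permutations $w_{D(a)}$. Since permutations of the same length cannot be proper prefixes of one another, and since by the forward direction every admissible $E \in \mathcal{D}^{(\lambda)}$ has the form $D(b)$, Proposition~\ref{prop:3.12c} shows that each $w_{D(a)} \in Y(\lambda)$.

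The main obstacle is the structural classification in the forward direction: even once $D$ is restricted to two columns, one must rule out every middle-row configuration except the $r-1$ threshold patterns, and this is precisely where the length-$r$ path requirement does the work. Once this is in place, the rest is a clean inversion count together with the uniform length $r-1$ ruling out prefix relations among the $w_{D(a)}$.
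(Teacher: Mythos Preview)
There is a genuine gap in the forward direction. The implication ``$\mu_D'' \unlhd (r,2)$ forces $\mu_D''$ to have at most two parts'' is false: for any $r\ge3$ one has $(r-1,2,1)\unlhd(r,2)$. Concretely, for $r=4$ and $\lambda=(2,1,1,2)$ the element $z=s_2=[1,3,2,4,5,6]$ lies in $Z(\lambda)$, and $D(z,\lambda)=\{(1,1),(1,2),(2,1),(3,2),(4,2),(4,3)\}$ is an admissible three-column diagram. Since your argument uses only the admissibility of $D$ and not the stronger hypothesis $y\in Y(\lambda)$, it cannot establish that $D(y,\lambda)$ has two columns; the same example also refutes the later assertion that ``every admissible $E\in\mathcal{D}^{(\lambda)}$ has the form $D(b)$'', on which your application of Proposition~\ref{prop:3.12c} in the reverse step relies.

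The paper avoids this issue by a different route: it shows directly that \emph{every} $z\in Z(\lambda)$ is a prefix of some $w_{D(a)}$, allowing $D=D(z,\lambda)$ to have arbitrarily many columns. Writing its nodes as $(i,j_i)$ on the middle rows together with $(1,j_1),(1,j_1'),(r,j_r),(r,j_r')$ on the outer rows, the length-$r$ path forces $j_1\le j_2\le\cdots\le j_{r-1}\le j_r'$ and the covering $2$-path forces $j_1\le j_r$ and $j_1'\le j_r'$; a short case analysis then locates $a$ with $t^{D(a)}z$ standard. Your length computation $l(w_{D(a)})=r-1$ and the resulting pairwise non-prefix relations are correct and parallel the paper's use of Result~\ref{res:7c}, but on their own they only yield $\{w_{D(a)}\}\subseteq Y(\lambda)$ once the forward inclusion has been established; they do not supply it.
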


\begin{proof}
Let $\lambda$, $r$, $a$ and $D(a)$ be as in the statement of the proposition.
\begin{center}
$ \begin{array}{ccccc}
D(a) & = &
\begin{array}{cc}
 \times & \times  \\[-5pt]
 \vdots &         \\
 \times &         \\
        & \times  \\[-5pt]
        & \vdots  \\
 \times & \times  \\
 \end{array}
\end{array}$
\end{center}
It is easy to observe from Result~\ref{res:7c} that for $i,j$ with $2\le i,j\le r$ and $i\ne j$, we have that $w_{D(i)}$ cannot be a prefix of $w_{D(j)}$.
It is thus sufficient to show that any element of $Z(\lambda)$ is a prefix of $w_{D(a)}$ for some $a$ with $2\le a\le r$.

Let $z\in Z(\lambda)$ be given and set $D=D(z,\lambda)$.
Then $D$ is admissible by Theorem~\ref{thm:3.9c}.
Suppose $D=\{(i,j_i):$ $1\le i\le r\}$ $\cup$ $\{(i,j_1'),$ $(r,j_r')\}$ where $j_1<j_1'$ and $j_r<j_r'$.
Since $D$ is of subsequence type $\lambda'=(r,2)$, it has a path $\Pi$ of length $r$ and two disjoint paths $\Pi_1$ and $\Pi_2$ which contain all nodes of $D$.
Necessarily, $\Pi$ contains the nodes $(i,j_i)$ with $2\le i\le r-1$ and precisely one node from each of the remaining rows.
It follows that $j_1\le j_2\le\ldots\le j_{r-1}\le j_r'$.
Moreover, we must have that $\Pi_1$ contains precisely one node from each of the first and last rows of $D$ while $\Pi_2$ contains the remaining two nodes from these two rows.
This forces $j_1\le j_r$ and $j_1'\le j_r'$.

If $j_1'\le j_2$, it is then an easy consequence of Result~\ref{res:7c} that $z\,(=w_D)$ is a prefix of $w_{D(2)}$.
Similarly, if $j_{r-1}\le j_r$ we can see that $z$ is a prefix of $w_{D(r)}$.


It remains to consider the case when $j_1'>j_2$ and $j_{r-1}>j_r$.
First, we show that if $i$ satisfies $2\le i\le r-1$,
then either $j_i\le j_r$ or $j_1'\le j_i$.
Suppose, on the contrary, that $j_i>j_r$ and $j_1'>j_i$ for some $i$
with $2\le i\le r-1$.
Then $j_1'>j_i>j_r$ for some $i$ with $2\le i\le r-1$.
Hence the nodes $(1,j_1')$ and $(r,j_r)$ cannot both belong to the
same one of the paths $\Pi_1$ or $\Pi_2$ described above.
It follows that $(1,j_1)$ and $(r,j_r)$ belong to the same path,
say $\Pi_1$, and $(1,j_1')$, $(r,j_r')$ belong to $\Pi_2$.
But $(i,j_i)\not\in\Pi_1$ since $j_i>j_r$ and $(i,j_i)\not\in\Pi_2$
since $j_i<j_1'$.
This contradicts the fact that $D=\Pi_1\cup\Pi_2$ and shows that
either $j_i\le j_r$ or $j_1'\le j_i$ if $2\le i\le r-1$.
In particular, taking $i=r-1$, we get $j_1'\le j_{r-1}$.

Now let $a$ be the smallest integer with $2\le a\le r$ such that $j_a\ge j_1'$.
Note that $2<a<r$ since $j_1'>j_2$ by our assumption, and $j_1'\le j_{r-1}$, as we have seen already.
It follows that $j_{a-1}<j_1'$ and hence $j_{a-1}\le j_r$ in view of our observation above.
This ensures that $\Pi_1'=\{(1,j_1), (2,j_2),\ldots, (a-1,j_{a-1}),(r,j_r)\}$ and $\Pi_2'=\{(1,j_1'), (a,j_a),\ldots, (r,j_r')\}$ are both paths in $D$ and hence the tableau $t^{D(a)}z$ is standard.
Invoking Result~\ref{res:7c}, we conclude that $z$ is a prefix of $w_{D(a)}$.
\end{proof}


\begin{section}
{Lifting cells}
\label{sec:6c}
In this section, we examine how the cell associated with a composition
$\lambda$ of $n$ may be related to the cells associated with certain
related compositions of $n+1$.

\subsection{A process relating to the induction of cells}\label{SubsectionProcessInducCells}

The \emph{lower star} composition $\lambda_*$ of a composition
$\lambda=(\lambda_1,\dots,\lambda_r)$ of $n$ is the composition
$(\lambda_1,\dots,\lambda_r,1)$ of $n+1$.
\par
If $\lambda=(1,2,1,2)$,
then $Y(\lambda)$ $=$ $\{y_1,y_2\}$,
where $y_1$ $=$ $[3,1,4,5,2,6]$ and $y_2$ $=$ $[1,2,5,3,4,6]$,
and $Y(\lambda_*)$ $=$ $\{y_1',y_2',y_3'\}$, where
$y_1'\!$ $=$ $\![ 1$, $2$, $6$, $3$, $4$, $7$, $5 ]$,
$y_2'\!$ $=$ $\![ 3$, $1$, $4$, $5$, $2$, $6$, $7 ]$,
and
$y_3'\!$ $=$ $\![ 2$, $1$, $3$, $4$, $5$, $7$, $6 ]$.
%
\begin{table}[h]
\begin{center}
$\begin{array}{ccccccccc}
\begin{array}[t]{cc}
      & 3 \\
    1 & 4 \\
      & 5 \\
    2 & 6 \\
\end{array}
&
\hspace{10pt}
&
\begin{array}[t]{cc}
    1    \\
    2 & 5 \\
    3    \\
    4 & 6 \\
\end{array}
&
\hspace{10pt}
&
\begin{array}[t]{cc}
    1    \\
    2 & 6 \\
    3    \\
    4 & 7 \\
    5    \\
\end{array}
&
\hspace{10pt}
&
\begin{array}[t]{cc}
      & 3 \\
    1 & 4 \\
      & 5 \\
    2 & 6 \\
      & 7 \\
\end{array}
&
\hspace{10pt}
&
\begin{array}[t]{ccc}
      & 2    \\
    1 & 3    \\
      & 4    \\
      & 5 & 7 \\
      & 6    \\
\end{array}
\\
\\[-7pt]
y_1 && y_2 && y_1' && y_2' && y_3'
\end{array}$
\end{center}
\caption{The tableaux of the elements of $Y(\lambda)$ and
$Y(\lambda_*)$, $\lambda=(1,2,1,2)$.}
\label{tbl:4c}
\end{table}
\par
Extending the diagram underlying $y_1$ to an admissible diagram in
$\mathcal{D}^{(1,2,1,2,1)}$ by placing a node in the fifth row
requires the node to be in the second column (or later),
whereas the diagram underlying $y_2$ can be so extended by placing
the node anywhere in the fifth row.

\medskip
{\bf Definition.}
Let $\lambda\vDash n$ and let $D\in \mathcal D^{(\lambda)}$.
For $1\le j\le n$, let $u_j$ be the node $u$ of $D$ for which $ut_D=j$ (so we have $(u_j)t_D=j$).
Suppose further that $u_j=(a_j,b_j)$ for $1\le j\le n$.

(i) Define $D_0'\in\mathcal D^{(\lambda_*)}$ by $D_0'=\{(i,j+1):\ (i,j)\in D\}\cup \{(r_D+1,1)\}$.

(ii) For $1\le i\le c_D$, define $D_i'\in\mathcal D^{(\lambda_*)}$ by $D_i'=\{(a,b):\ (a,b)\in D\}\cup\{(r_D+1,i)\}$.

(iii) For each $i$, with $1\le i\le n$, define $D'(u_i)\in\mathcal D^{(\lambda_*)}$ as follows:\\
(a) If $b_{i+1}>b_i$, then $D'(u_i)=D\cup \{(r_D+1,b_i)\}$\\
(b) If $b_{i+1}=b_i$, then $D'(u_i)=\{u_j\in D:\ j\le i\}\cup \{(r_D+1,b_i)\}\cup\{(a_j,b_{j}+1):$ $(a_j,b_j)=u_j\in D$ and $j>i\}$

Informally, for item~(iii), if $u_i=(a_i,b_i)$ is the last node in its column, $D'(u_i)$ is obtained from $D$ by introducing a node in the same column as $u_i$ (column $b_i$) in a row immediately after the last row of $D$.
Otherwise, $D'(u_i)$ is obtained by inserting a new column to $D$ immediately after column $b_i$ and moving into this new column all nodes in column $b_i$ below node $u_i$ while introducing an extra node in column $b_i$ in a row immediately after the last row of~$D$.

\medskip
Now let $D\in \mathcal D^{(\lambda)}$ where $\lambda\vDash n$.
It is clear from the above definition that $D_0'$ is not admissible even in the case $D$ is assumed to be admissible.
On the other hand, if $D$ is admissible then $D'(u_n)$ is also admissible.
This is because any $k$-path of length $l$ in $D$ can be converted to a $k$-path of length $l+1$ in $D'(u_n)$ by just adding to it the single node in the last row of $D'(u_n)$.
It is also easy to observe that for $j$, with $1\le j\le c_D$, we have that $D_j'=D'(u_t)$ where $u_t$ is the last node in column $j$ of $D$; in particular if $s=c_D$ then $D_s'=D'(u_n)$.

An immediate consequence of the above is that whenever $D$ is an admissible (principal) diagram of size $n$ then both sets $\{i:\ 1\le i\le n$ and $D'(u_i)$ is admissible$\}$ and $\{j:\ 1\le j\le c_D$ and $D_j'$ is admissible$\}$ are nonempty.
This leads to the following definition.

{\bf Definition.}
Let $D$ be an admissible (principal) diagram of size $n$.
We set $p(D)=\min\{i:\ 1\le i\le n$ and $D'(u_i)$ is admissible$\}$ and $q(D)=\min\{j:\ 1\le j\le c_D$ and $D_j'$ is admissible$\}$.

\begin{remark}\label{RemarkwD'}

Let $\lambda\vDash n$ and let $D\in \mathcal D^{(\lambda)}$ be admissible.

(i)
It then follows that $D'(u_i)$ is admissible for $i\ge p(D)$ and $D'(u_i)$ is not admissible for $i<p(D)$.
This is because for $j<l$, with $1\le j,l\le n$, we have that for any $k$-path in $D'(u_j)$ there is a `corresponding' $k$-path of the same length in $D'(u_l)$.

(ii) Fix $i$ with $1\le i\le n$ and set $D'=D'(u_i)$.
An easy computation shows that $w_{D'}=w_D(i+1,\, i+2,\, \ldots,\, n+1)=w_Ds_ns_{n-1}\cdots s_{i+1}$.
Hence, for $j>i$ we have, on setting $E'=D'(u_j)$, that $w_{E'}$ is a prefix of $w_{D'}$ which is in agreement with the above observation that $w_{E'}\in Z(\lambda_*)$ whenever $w_{D'}\in Z(\lambda_*)$.

On the other hand, if we set $D'=D_0'$ it is also easy to observe that $w_{D'}=w_Ds_ns_{n-1}\ldots s_2s_1$.

We can deduce that $\mathfrak{X}'=\{w_D^{-1}w_{D'}:$ $D'=D_0'$ or $D'=D'(u_i)$ for some $i$ with $1\le i\le n\}$.
\end{remark}

\begin{lemma}\label{LemmaUpIsTheLastNode}
Let $\lambda\vDash n$ and let $D\in \mathcal D^{(\lambda)}$ be an admissible (principal) diagram.
Also let $p=p(D)$ and $q=q(D)$.
We then have that $u_p$ is the last node in column $q$ of $D$ (and hence $D'(u_p)=D'_q$).
\end{lemma}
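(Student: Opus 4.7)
My approach is to first translate the admissibility conditions for $D'(u_i)$ and $D'_j$ into statements about $k$-paths in the original diagram $D$, then deduce $q=b_p$, and finally use a path-surgery argument to show $u_p$ is the last node in column $b_p$. The reformulation: examining the column-shifting in case~(b) of the definition of $D'(u_i)$ and using that the labels $u_j$ are in column-reading order, one checks that $k$-paths in $D$ correspond bijectively with $k$-paths of the same length in $E:=D'(u_i)\setminus\{(r_D+1,b_i)\}$. Hence $D'(u_i)$ is admissible iff, for each $k\in\{1,\dots,r'\}$ (with $r'=\lambda'_1$), some max $k$-path in $D$ (of length $\lambda'_1+\cdots+\lambda'_k$) has a constituent ending at $u_t$ with $t\le i$. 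Similarly, $D'_j$ is admissible iff some such max $k$-path has a constituent terminating in a column $\le j$.

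For $q=b_p$: admissibility of $D'(u_p)$ gives, for each $k$, a constituent ending at $u_t$ with $t\le p$, hence in column $b_t\le b_p$, so $D'_{b_p}$ is admissible and $q\le b_p$. If instead $q<b_p$, then for each $k$ some max $k$-path has a constituent terminating in a column $<b_p$; its column-reading index is less than the first index $s$ of column $b_p$, and since $s\le p$ this index is $\le p-1$. Then $D'(u_{p-1})$ would be admissible, contradicting the minimality of $p$. (The edge case $p=1$ forces $\lambda=(n)$, where the lemma is immediate.)

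To show $u_p$ is last in column $b_p$, suppose not, so $b_{p+1}=b_p$ and $a_{p+1}>a_p$. By minimality of $p$ there is some $k_0$ such that no max $k_0$-path has a constituent ending at an index $\le p-1$; combined with admissibility of $D'(u_p)$, there is a max $k_0$-path $\Pi$ with a constituent $\pi$ ending exactly at $u_p$. First, $u_{p+1}$ must lie in $\Pi$, otherwise appending it to $\pi$ would yield a path of length $|\Pi|+1$, exceeding the maximum. Second, $u_{p+1}$ lies in a constituent $\pi''\ne\pi$, since $\pi$ ends at $u_p$ and $a_{p+1}>a_p$. Third, $u_{p+1}$ is not the first node of $\pi''$: otherwise concatenating $\pi$ with $\pi''$ (legal because the step $u_p\to u_{p+1}$ satisfies the path condition) would produce a $(k_0-1)$-path of length $\lambda'_1+\cdots+\lambda'_{k_0}$, exceeding the maximum $(k_0-1)$-path length in the admissible diagram $D$.

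Let $w$ denote the predecessor of $u_{p+1}$ in $\pi''$, and split $\pi''$ at $w$ into $\pi''_1$ (ending at $w$) and $\pi''_2$ (starting at $u_{p+1}$). Replacing $\pi$ and $\pi''$ in $\Pi$ by $\pi''_1$ and the concatenation $\pi\cup\pi''_2$ yields a max $k_0$-path $\Pi'$ of the same total length, in which the new constituent $\pi''_1$ ends at $w$. The conditions $a_w<a_{p+1}$ and $b_w\le b_p$ force the column-reading index $t'$ of $w$ to satisfy $t'\le p-1$: if $b_w<b_p$ then $t'<s\le p$, while if $b_w=b_p$ then $w\in\{u_s,\dots,u_{p-1}\}$, since $w\ne u_p\in\pi$ and $w\ne u_{p+1}$. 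This contradicts the choice of $k_0$, so $u_p$ is last in column $b_p$; together with $q=b_p$ we conclude $D'(u_p)=D'_q$ by case~(a) of the definition. The hardest step is this surgery argument, in particular verifying that the exposed terminus $w$ of the new constituent has the required column-reading index.
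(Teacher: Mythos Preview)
Your proof is correct and follows essentially the same approach as the paper's. Both arguments hinge on the identical path-surgery: given a maximal $k_0$-path $\Pi$ with a constituent $\pi$ terminating at $u_p$, one shows that if $u_{p+1}$ lies in the same column then it must belong to another constituent $\pi''$ of $\Pi$, not as its first node, and then rearranging $\pi$ and $\pi''$ exposes a constituent terminating at an index $\le p-1$, contradicting the minimality of $p$.

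The organisational difference is minor. The paper introduces, for each $k$, the quantity $m_k(D)=\min_{\Pi\in\mathfrak P_k}m_\Pi$ and proves the stronger intermediate fact that $u_{m_k(D)}$ is the last node in its column for \emph{every} $k$; it then sets $m(D)=\max_k m_k(D)$, identifies $m(D)=p(D)$, and leaves the identification of column $q$ as an easy consequence. You instead give the reformulation of admissibility of $D'(u_i)$ and $D'_j$ explicitly, deduce $q=b_p$ directly, and apply the surgery only at the single critical $k_0$ witnessing non-admissibility of $D'(u_{p-1})$. Your route is slightly more economical for the lemma as stated; the paper's route yields a little extra information (the column-terminal property for each $m_k(D)$) that is not subsequently used.
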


\begin{proof}
We assume the hypothesis and suppose that
$\lambda'=(\lambda_1',\ldots,\lambda_{r'}')$.
For each $k$ with $1\le k\le r'$, let $\mathfrak{P}_k$ be the set of
$k$-paths in $D$ of length $\lambda_1'+\ldots+\lambda_{k}'$
(recall that $\mathfrak{P}_k$ is nonempty for $1\le k\le r'$ since $D$
is admissible).
For each $\Pi\in\mathfrak{P}_k$, let $m_\Pi=\min\{i:$ there exists a
path in $\Pi$ that terminates at $u_i\}$.
Also let $m_k(D)=\min\{m_\Pi:\ \Pi\in\mathfrak{P}_k\}$.

For the moment, we fix $k$.
We write $l=m_k(D)$.
We claim that $u_l$ is the last node in its column.
This is immediate for $k=1$, since $u_l$ is at the end of a path
of length $\lambda_1'$ in this case.
So we need only consider $1<k\le r'$.
Also observe that what is stated in the claim is trivially true when $l=n$.

Assume now that $l<n$ and that $u_l$ and $u_{l+1}$ are in the same column of $D$ (that is, $u_l$ is not the last node in its column)
and let $\Pi'$ be a $k$-path in $D$ of length
$\lambda_1'+\ldots+\lambda_{k}'$ such that $m_{\Pi'}=l$.
Let $\Pi_1$ be a path in $\Pi'$ that terminates at $u_l$ and let $l_1$ be the length of $\Pi_1$.
Then $\Pi_1'=\Pi_1\cup\{u_{l+1}\}$ is
a path in $D$ of length $l_1+1$.
If $u_{l+1}\not\in\Pi'$ then $(\Pi'\setminus\Pi_1)\cup\Pi_1'$ forms a
$k$-path in $D$ of length $\lambda_1'+\ldots+\lambda_{k}'+1$.
This is impossible since $D$ is of subsequence type
$(\lambda_1',\ldots,\lambda_{r'}')$.
Hence, $u_{l+1}\in\Pi'$.

Since $u_l$ is the last node in path $\Pi_1$ of $\Pi'$, $u_{l+1}$ is
in a different path $\Pi_2$ $(\ne\Pi_1)$ of $\Pi'$.
Let $\overline{\Pi}_2=\{u_i\in\Pi_2:\ i\ge l+1\}$.
Then $\Pi_1\cup\overline{\Pi}_2$ forms a path in $D$ and its nodes
are in $\Pi'$.
If $\Pi_2\subseteq\overline{\Pi}_2$, it would then follow
that $\Pi'$ is equivalent to a $(k-1)$-path of length
$\lambda_1'+\ldots+\lambda_{k}'$ contrary to the admissibility of $D$
since $\lambda_k'\ge 1$.
Hence $\Pi_2\setminus\overline{\Pi}_2\ne\varnothing$ and
$\Pi_2\setminus\overline{\Pi}_2$ is a path terminating at node $u_{l'}$
where $l'<l$.
Considering the decomposition
$(\Pi'\setminus(\Pi_1\cup\Pi_2))\cup(\Pi_1\cup\overline{\Pi}_2)\cup(
\Pi_2\setminus\overline{\Pi}_2)$ we see that $\Pi'$ is equivalent to a
$k$-path $\Pi''$ in $D$ where $m_{\Pi''}\le l'<l$.
This contradicts the choice of $l$ and establishes that $u_{l}$ and
$u_{l+1}$ are in different columns of $D$.
In particular, $u_{l}$ is the last node in its column.

Now we define $m(D)=\max\{m_k(D):\ 1\le k\le r'\}$ and let $m=m(D)$.
It is clear from this definition that $u_m$ is the last node in its
column.
Moreover, the above construction ensures that $D'(u_i)$ is admissible
for $i\ge m$ and $D'(u_i)$ is not admissible for $i<m$.
We conclude that $m(D)=p(D)$.
The required result now follows easily.
\end{proof}

We are now ready to state the main result of this subsection.

\begin{theorem}\label{TheoremReplaceThm6.1}
Let $\lambda\vDash n$. For $z\in Z(\lambda)$ set $D=D(z,\lambda)$ and $D'=D_q'$, where $q=q(D)$.
Then there is a well-defined injective map $\theta_*$ from $Z(\lambda)$ to $Z(\lambda_*)$ in which $z\mapsto z'$, where $z'=w_{D'}$.
Moreover, $\theta_*$ has the following properties.

(i) For $z\in Z(\lambda)$ we have $z^{-1}(z\theta_*)\in\mathfrak{X}'$ and whenever $zx\in Z(\lambda_*)$ with $x\in\mathfrak{X}'$ we have that $x$ is a prefix of $z^{-1}(z\theta_*)$.

(ii) $Y(\lambda)\theta_*\subseteq Y(\lambda_*)\subseteq Z(\lambda)\theta_*$.
\end{theorem}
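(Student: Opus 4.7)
My plan is to split the argument into four stages: set up $\theta_*$ and compute $z^{-1}(z\theta_*)$, deduce injectivity together with the universal property of~(i), establish $Y(\lambda_*)\subseteq Z(\lambda)\theta_*$ via recording-tableau analysis, and handle $Y(\lambda)\theta_*\subseteq Y(\lambda_*)$ by contradiction using Proposition~\ref{prop:3.12c}. The main obstacle will be a delicate case, near the end, in which the auxiliary diagram produced by Proposition~\ref{prop:3.12c} might correspond to the same element of $W$ as $D$.

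First, for $z\in Z(\lambda)$, $D=D(z,\lambda)$ is admissible by Theorem~\ref{thm:3.9c}(ii), so Lemma~\ref{LemmaUpIsTheLastNode} gives that $D'_{q(D)}=D'(u_{p(D)})$ is admissible, hence $z\theta_*=w_{D'_{q(D)}}\in Z(\lambda_*)$ via Theorem~\ref{thm:3.9c}(ii). Remark~\ref{RemarkwD'}(ii) identifies $z\theta_*=z\,x_{p(D)+1}$, so $z^{-1}(z\theta_*)=x_{p(D)+1}\in\mathfrak X'$; reading off $(n+1)(z\theta_*)=p(D)+1$ recovers $p(D)$ (and hence $z$) from $z\theta_*$, giving injectivity. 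For the universal property, suppose $zx\in Z(\lambda_*)$ with $x=x_j\in\mathfrak X'$; the case $j=1$ is excluded because $zx_1=w_{D'_0}$ and $D'_0$ is never admissible, so $j\ge 2$ and $zx_j=w_{D'(u_{j-1})}$ by Remark~\ref{RemarkwD'}(ii). Admissibility of $D'(u_{j-1})$ combined with Remark~\ref{RemarkwD'}(i) then forces $j-1\ge p(D)$, and $x_j=s_n\cdots s_j$ is a prefix of $x_{p(D)+1}=s_n\cdots s_{p(D)+1}$.

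For $Y(\lambda_*)\subseteq Z(\lambda)\theta_*$, let $y'\in Y(\lambda_*)$, set $j=(n+1)y'$ and write $y'=zx_j$ with $z=y'x_j^{-1}\in S_n$. Since $J(\lambda_*)=J(\lambda)$ as subsets of $S'$, we have $W_{J(\lambda_*)}=W_{J(\lambda)}$ and $\mathfrak X_{J(\lambda_*)}=\mathfrak X_{J(\lambda)}\mathfrak X'$, so uniqueness of this decomposition gives $z\in\mathfrak X_{J(\lambda)}$. To see $z\in Z(\lambda)$, I use that the recording tableau of $\mathfrak C(\lambda_*)$ equals the recording tableau of $\mathfrak C(\lambda)$ with $n+1$ adjoined at $(1,\lambda_1'+1)$ (because $w_{J(\lambda)}$ viewed in $S_{n+1}$ fixes $n+1$, so R--S insertion places the final entry at the end of the first row); removing $n+1$ from the recording tableau of $y'$ therefore gives the recording tableau of $z$, which must equal that of $\mathfrak C(\lambda)$, i.e.\ $z\in Z(\lambda)$. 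The universal property in~(i) now shows that $x_j$ is a prefix of $x_{p(D(z,\lambda))+1}$; if it were a \emph{proper} prefix, then $y'=zx_j$ would be a proper prefix of $z\theta_*\in Z(\lambda_*)$, contradicting $y'\in Y(\lambda_*)$. Hence $j=p(D(z,\lambda))+1$ and $y'=z\theta_*$.

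Finally, for $Y(\lambda)\theta_*\subseteq Y(\lambda_*)$, suppose for contradiction that $y\in Y(\lambda)$ but $y\theta_*\notin Y(\lambda_*)$, and set $D=D(y,\lambda)$, $D'=D'_{q(D)}$. By Proposition~\ref{prop:3.12c} applied to $\lambda_*$ there is an admissible $E'\in\mathcal D^{(\lambda_*)}$ with $w_{E'}\ne w_{D'}$ and $t^{E'}w_{D'}$ standard. Removing the unique last-row node of $E'$ (and its column if it becomes empty) yields $E\in\mathcal D^{(\lambda)}$; any maximal $k$-path in $E'$ must pass through this node, otherwise it would persist as a $k$-path in $E$ whose length exceeds the bound $\lambda_1'+\cdots+\lambda_k'$ given by Proposition~\ref{prop:3.8c}, so $E$ is admissible, and tracking the column-fill indices produces $w_{E'}=w_E x_K$ where $K$ is the value of $t_{E'}$ at the removed node. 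The key step is ruling out $w_D=w_E$: were this to hold, the universal property of~(i) applied to $w_D\in Z(\lambda)$ would force $K\ge p(D)+1$ (from $w_D x_K=w_{E'}\in Z(\lambda_*)$), while length additivity along the prefix $w_D x_{p(D)+1}\le w_D x_K$ would force $K\le p(D)+1$, giving $K=p(D)+1$ and $w_{E'}=w_{D'}$, contrary to assumption. Hence $w_D\ne w_E$, and a short four-case verification based on $w_{D'}=w_D x_{p(D)+1}$ (comparing the images of entries of $t^E$ with the threshold $p(D)+1$) transfers standardness from $t^{E'}w_{D'}$ to $t^E w_D$. Proposition~\ref{prop:3.12c} applied to $\lambda$ now gives $y=w_D\notin Y(\lambda)$, the required contradiction.
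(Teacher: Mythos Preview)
Your proof is correct, and for the setup, injectivity, and part~(i) it matches the paper closely. The two inclusions in~(ii) you handle by genuinely different routes.

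For $Y(\lambda_*)\subseteq Z(\lambda)\theta_*$, the paper argues entirely within the diagram/path framework: it takes $C=D(\bar y,\lambda_*)$, deletes the last-row node to obtain $B$, shows $B$ admissible via a direct $k$-path comparison, sets $F=D(w_B,\lambda)$, and uses Result~\ref{res:ischia} and Remark~\ref{RemarkwD'} to force $C=F'_{q(F)}$. You instead factor $y'=zx_j$ and appeal to Robinson--Schensted (essentially Proposition~\ref{prop:2.1c}): the recording tableau of $\mathfrak C(\lambda_*)$ is that of $\mathfrak C(\lambda)$ with $n{+}1$ at the end of row~1, so deleting $n{+}1$ drops you into $\mathfrak C(\lambda)$, after which~(i) finishes the job. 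This is shorter and more conceptual. One slip to fix: where you write ``recording tableau of $y'$'' and ``of $z$'' you must mean those of $w_{J(\lambda_*)}y'=(w_{J(\lambda)}z)x_j$ and of $w_{J(\lambda)}z$; the elements $y'\in Z(\lambda_*)$ and $z\in\mathfrak X_{J(\lambda)}$ are coset representatives, not cell elements, and it is $\mathcal Q(w_{J(\lambda)}z)$ that equals the recording tableau of $\mathfrak C(\lambda)$.

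For $Y(\lambda)\theta_*\subseteq Y(\lambda_*)$, both proofs strip the last-row node of the auxiliary admissible $E'\in\mathcal D^{(\lambda_*)}$ to get an admissible diagram in $\mathcal D^{(\lambda)}$ and transfer standardness down. The paper then uses $y\in Y(\lambda)$ to \emph{deduce} that $w_F=w_D$, and extracts its contradiction structurally: the last-row node of $E'=D(w_{E'},\lambda_*)$ cannot be alone in its column, hence $E'=D'(u_a)$ for some $a>p(D)$, and the entry $p(D)+1$ in the last row of $t^{E'}y'$ then lies below a larger entry in the same column, violating standardness. You instead \emph{rule out} $w_E=w_D$ directly: in that case~(i) forces $K\ge p(D)+1$ while the prefix relation $y'\le w_{E'}$ forces $K\le p(D)+1$, giving $w_{E'}=w_{D'}$; having secured $w_E\ne w_D$, Proposition~\ref{prop:3.12c} applied in $S_n$ yields the contradiction immediately. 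Your version is slicker and reuses~(i); the paper's version is self-contained at the diagram level and does not feed part~(i) back into~(ii).
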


\begin{proof}
It is clear that the map $\theta_*$ from $Z(\lambda)$ to $Z(\lambda_*)$ described in the statement of the theorem is well-defined.
Moreover, with $z$, $D$, $D'$ and $z'$ as above, it is easy to observe using Result~\ref{res:ischia} that diagram $D'$ is in fact exactly the same as diagram $D(z',\lambda_*)$.
Suppose now that $z_1, z_2\in Z(\lambda)$ satisfy $z_1\theta_*=z'=z_2\theta_*$ for some $z'$ in $Z(\lambda_*)$.
Now $z'$ together with $\lambda_*$ determine uniquely diagram $D'$ (this is because $D'=D(z',\lambda_*)$).
But the construction via which $\theta_*$ is defined ensures that, by removing the single node in its last row, $D'$ determines diagrams $D(z_1,\lambda)$ and $D(z_2,\lambda)$ uniquely, forcing them to be equal.
The equality $D(z_1,\lambda)=D(z_2,\lambda)$ ensures that $z_1=z_2$ and this is enough to establish that $\theta_*$ is injective.

For item (i), let $z\in Z(\lambda)$ and let $D=D(z,\lambda)$.
Then $z\theta_*=w_{D'}$ where $D'=D_q'$ ($q=q(D)$) by the definition of $\theta_*$.
We know from Lemma~\ref{LemmaUpIsTheLastNode} that $D_q'=D'(u_p)$ where $p=p(D)$.
It is now immediate from Remark~\ref{RemarkwD'} that any $x\in\mathfrak{X}'$ which satisfies $zx\in Z(\lambda_*)$ must be of the form $z^{-1}w_E$ where $E=D'(u_j)$ for some $j$ with $j\ge p$ and, in addition, that such an $x$ must be a prefix of $z^{-1}(z\theta_*)$.

In order to establish the second inclusion of sets in item (ii), suppose $\overline{y}\in Y(\lambda_*)$ and consider diagram $C=D(\overline{y},\lambda_*)$.
Then $C\in\mathcal D^{(\lambda_*)}$ and $C$ is an admissible diagram.
Now let $B\in \mathcal D^{(\lambda)}$ be the diagram obtained from $C$ by removing the single node in its last row.
Let $\nu$ be the subsequence type of $B$.
Then $\nu\unlhd\lambda'=(\lambda_1',\ldots,\lambda_{r'}')$ by
Proposition~\ref{prop:3.8c}.
As $C$ is of subsequence type
$(\lambda_1'+1,\lambda_2'\ldots,\lambda_{r'}')=\lambda_*'$,
if $1\le k\le r'$ then any $k$-path of length
$\lambda_1'+\cdots+\lambda_{k}'+1$ in $C$ gives a $k$-path of length
$\ge\lambda_1'+\cdots+\lambda_{k}'$ in $B$.
So $\lambda'\unlhd\nu$.
Thus $B$ is admissible.

Now let $F=D(w_B,\lambda)$.
Then $w_F=w_B\in Z(\lambda)$ as $B$ is admissible.
From the construction of $B$, either $F=B$ or $F$ is obtained from $B$ by
merging two adjacent columns (see Result~\ref{res:ischia}).
Since $C$ is admissible and $C=D(\overline{y},\lambda_*)$, we have $C=F'(u_i)$ for some $i$ with $p(F)\le i\le n$ (see Remark~\ref{RemarkwD'}).
Since $\overline{y}\in Y(\lambda_*)$, $i=p(F)$ again by Remark~\ref{RemarkwD'}.
Hence $C=F'_{q(F)}$ by Lemma~\ref{LemmaUpIsTheLastNode}.
We deduce that $\overline{y}\,(=w_C)=w_F\theta_*\in Z(\lambda)\theta_*$ as required.

Finally we want to establish that $Y(\lambda)\theta_*\subseteq Y(\lambda_*)$.
For this, let $y\in Y(\lambda)$ and suppose that $y\theta_*\not\in Y(\lambda_*)$.
Let $y'=y\theta_*$.
Also let $D=D(y,\lambda)$ and $D'=D(y',\lambda_*)$.
Then $y'=w_{D'}$ and $D'=D'(u_p)=D_q'$ where $p=p(D)$ and $q=q(D)$.
It follows that $(n+1)y'=p+1$.
Since $y'\not\in Y(\lambda_*)$, by Proposition~\ref{prop:3.12c} there exists an admissible (principal) diagram $E\in\mathcal D^{(\lambda_*)}$ such that $t^Ey'$ is standard and $w_E\ne w_{D'}$.
We can assume without loss of generality that $E=D(w_E,\lambda_*)$
since, if $E'=D(w_E,\lambda_*)$, then $w_E=w_{E'}$ and $t^{E'}y'$
is standard from Result~\ref{res:ischia}.
Now let $F$ be the diagram obtained from $E$ by removing the single node on its last row.
Then $F$ is admissible by similar argument as above.
Next, we would like to compare the tableaux $t^Ey'$ and $t^Fy$. 
For this, define subsets $T_1$ and $T_2$ of $\{1,\ldots,n\}$ as follows :
$T_1=\{i:\ 1\le i\le n$ and $iy'<p+1\}$ and $T_2=\{i:\ 1\le i\le n$ and $iy'>p+1\}$.
We then have $iy=iy'$ for $i\in T_1$ and $iy=(iy')-1$ for $i\in T_2$.
Thus, tableau $t^Fy$ is obtained from $t^Ey'$ by removing the single entry in the last row of $t^Ey'$ and replacing each number $l>p+1$ by $l-1$.
It follows that $t^Fy$ is standard since $t^Ey'$ is standard.
By Result~\ref{res:7c}, $y$ is a prefix of $w_F$.
Since $w_F\in Z(\lambda)$ as $F$ is admissible and $y\in Y(\lambda)$,
we get $w_F=y\,(=w_D)$.
At this point it is also useful to observe that the single node in the last row of $E\,(=D(w_E,\lambda_*))$  cannot be the sole node of $E$ in its column.
For this, let the single node in row $r_E+1$ of $E$ be in column $j$.
That this node cannot be the sole node in its column follows in the case $j=1$ from Remark~\ref{RemarkwD'} since $E$ is admissible, and in the case $j>1$ from the fact that $E=D(w_E,\lambda_*)$.

Combining the above observations with  Result~\ref{res:ischia} we see that either $F=D$ or $F$ can be obtained from $D$ by splitting a column of $D$ into two successive columns so that the nodes in the column with lesser column index have row indices which are less than the row indices of all the nodes in the column with greater column index.

We conclude that $E=D'(u_a)$ for some $a$.
Since $E$ is admissible, $a\ge p$.
Since $D'=D'(u_p)$ and $E\ne D'$, $a\ne p$.
It follows from this that the entry $p+1$ which appears in the last row of $t^Ey'$ must lie in a column which includes entries greater than $p+1$ since $p+1$ is not the sole entry in its column.
This contradicts the fact that $t^Ey'$ is standard.
Hence $y'\in Y(\lambda_*)$.
\end{proof}

We illustrate Theorem~\ref{TheoremReplaceThm6.1} with the following example.
Consider $\lambda=(2,1,5)$.
Then $\lambda_*=(2,1,5,1)$.
$Y(\lambda)=\{y_1,y_2\}$, where $y_1=(1,4)(2,6,3,7,5)$ and
$y_2=(1,4)(2,7,6,3,5)$, and
$Y(\lambda_*)=\{y_1',y_2',y_3',y_4',y_5'\}$, where
$y_1'=(1,4)(2,6,3,7,5)$, $y_2'=(1,4)(2,8,9,7,6,3,5)$,
$y_3'=(1,3,4)(2,8,9,6,5)$, $y_4'=(1,2,8,9,5,4)$, and
$y_5'=(2,8,9,4,3)$.
Putting $x_1=1$ and $x_2=(7,8,9)=s_8s_7$, we see that $y_1'=y_1x_1$,
$y_2'=y_2x_2$, and $x_1,x_2\in\mathfrak{X}'$.
\par

\begin{remark}
(i) If in the proof for the second inclusion of sets in item~(ii) of Theorem~\ref{TheoremReplaceThm6.1} we assume in place of ``$\overline{y}\in Y(\lambda_*)$'' that ``$\overline{y}\in Z(\lambda_*)$'' instead, the proof goes through without any change up to the point where we deduce that there exists $i$ with $p(F)\le i\le n$ such that $C=F'(u_i)$.
It follows from this that $\overline{y}=w_C=w_Fs_n\ldots s_{i+1}=zs_n\ldots s_{i+1}$ where $z\in Z(\lambda)$ (see Remark~\ref{RemarkwD'}).
Since $w_{J(\lambda_*)}=w_{J(\lambda)}$, we deduce that $\mathfrak C(\lambda_*)\subseteq \mathfrak C(\lambda)\mathfrak X'$
(in fact this last observation follows easily from Result~\ref{res:4c} or Proposition~\ref{prop:2.1c}).
Thus the map $\theta_*$ of the preceding theorem is compatible with the induction of cells.
Moreover, the map $\theta_*$ enables us to obtain some additional information about the induction process for the particular cells involved ($\mathfrak C(\lambda)$ and $\mathfrak C(\lambda_*)$) by relating their rims.
In fact, knowledge of $Z(\lambda)$ enables us to determine the rim of $\mathfrak C(\lambda_*)$ and hence obtain reduced forms for all the elements of $\mathfrak C(\lambda_*)$.
The disadvantage of the above process is that the map $\theta_*$ is difficult to construct.

(ii) In view of Result~\ref{res:4c}, a consequence of the above is that the cell module corresponding to $\mathfrak C(\lambda_*)$ occurs as a constituent of the cell module corresponding to $\mathfrak C(\lambda)$ induced up to $S_{n+1}$.
This also agrees with the Branching Theorem (see~\cite[Theorem~9.2(i)]{Jam78}); this last observation follows once we combine the above with~\cite[Theorem~3.5]{MPa05} or~\cite[Theorem~4.6]{MPa15}.
\end{remark}

Finally for this subsection, we consider a dual construction to that of the composition
$\lambda_*$.
The \emph{upper star} composition $\lambda^*$ is the composition of
$n+1$ formed by prepending a new part 1 to the composition $\lambda$.
Thus, $\lambda^*=\dot\mu$  where $\mu=(\dot\lambda)_*$.
It is easy to see that, combining Theorem~\ref{TheoremReplaceThm6.1} with
Proposition~\ref{thm:3.13c}, we get the following corollary.
\begin{corollary}
\label{thm:6.3c}
Let $\lambda$ be a composition of $n$.
Then there is a natural injective mapping
$\theta^*\colon Z(\lambda)\rightarrow Z(\lambda^*)$
such that $z\theta^*\in (w_{S'}w_Szw_S w_{S'})w_{S'}\mathfrak{X}'w_{S'}$ for every $z\in Z(\lambda)$.
Moreover, $Y(\lambda)\theta^*\subseteq Y(\lambda^*)$. 
\end{corollary}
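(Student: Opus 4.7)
The plan is to construct $\theta^*$ as a composition of three maps already at our disposal, exploiting the identity $\lambda^* = \dot{\mu}$ where $\mu = (\dot{\lambda})_*$ (this is the elementary observation that prepending a $1$ to $\lambda$ is the same as reversing $\lambda$, appending a $1$, and reversing again). First I would invoke Proposition~\ref{thm:3.13c} to obtain the bijection $\phi \colon Z(\lambda) \to Z(\dot{\lambda})$ given by $z \mapsto w_S z w_S$, which restricts to a bijection $Y(\lambda) \to Y(\dot{\lambda})$. Next, I would apply Theorem~\ref{TheoremReplaceThm6.1} to the composition $\dot{\lambda}$ of $n$ to obtain the injection $\theta_* \colon Z(\dot{\lambda}) \to Z(\mu)$ satisfying $Y(\dot{\lambda})\theta_* \subseteq Y(\mu)$. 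Finally, a second application of Proposition~\ref{thm:3.13c} yields the bijection $\psi \colon Z(\mu) \to Z(\dot{\mu}) = Z(\lambda^*)$ given by $y \mapsto w_{S'} y w_{S'}$, which restricts to a bijection $Y(\mu) \to Y(\lambda^*)$. Define $\theta^* = \phi \,\theta_*\, \psi$ (composition read left to right, matching the paper's right-action convention). Being a composition of injections it is itself injective.

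The containment $Y(\lambda)\theta^* \subseteq Y(\lambda^*)$ follows by chasing an element $y \in Y(\lambda)$ through the three maps: $y\phi \in Y(\dot{\lambda})$ by Proposition~\ref{thm:3.13c}, then $(y\phi)\theta_* \in Y(\mu)$ by the last statement of Theorem~\ref{TheoremReplaceThm6.1}, and finally $((y\phi)\theta_*)\psi \in Y(\lambda^*)$ by Proposition~\ref{thm:3.13c} again.

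To verify the coset formula, I would appeal to Theorem~\ref{TheoremReplaceThm6.1}(i), which supplies some $x \in \mathfrak{X}'$ with $(z\phi)\theta_* = (z\phi)\cdot x = w_S z w_S x$. Applying $\psi$ and using $w_{S'}^2 = 1$ gives
\[
z\theta^* \;=\; w_{S'}(w_S z w_S)\,x\,w_{S'} \;=\; (w_{S'} w_S z w_S w_{S'})\cdot(w_{S'} x w_{S'}) \;\in\; (w_{S'}w_S z w_S w_{S'})\,w_{S'}\mathfrak{X}' w_{S'},
\]
as required. The argument is essentially formal; the only real items to keep straight are the combinatorial identity $\lambda^* = \dot{(\dot{\lambda})_*}$ (so that $\psi$ lands in $Z(\lambda^*)$ rather than in some other cell) and the careful distinction between the longest elements $w_S$ of $W = S_n$ and $w_{S'}$ of $W' = S_{n+1}$ in the two applications of Proposition~\ref{thm:3.13c}. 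Beyond this bookkeeping there is no substantive obstacle, which is why the statement is phrased as a corollary rather than a theorem.
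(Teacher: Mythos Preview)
Your proposal is correct and follows exactly the approach the paper itself indicates: $\theta^*$ is obtained by composing the bijection of Proposition~\ref{thm:3.13c} (for $\lambda$, in $S_n$), the injection $\theta_*$ of Theorem~\ref{TheoremReplaceThm6.1} applied to $\dot\lambda$, and the bijection of Proposition~\ref{thm:3.13c} (for $\mu=(\dot\lambda)_*$, in $S_{n+1}$). Your verification of the coset formula via Theorem~\ref{TheoremReplaceThm6.1}(i) and the careful distinction between $w_S$ and $w_{S'}$ are exactly the bookkeeping the paper leaves to the reader.
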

If $\mu$ is the composition defined before the statement of the
theorem, then $\theta^*$ is obtained
by composing the bijection of Proposition~\ref{thm:3.13c},
the injection of Theorem~\ref{TheoremReplaceThm6.1} for $\dot\lambda$ and the
bijection of Proposition~\ref{thm:3.13c} for $\mu$.

\subsection{A process relating to the restriction of cells}

We now consider a different type of `extension' of a composition.

{\bf Definition.}
Let $\lambda=(\lambda_1,\ldots,\lambda_r)$ be an $r$-part composition of $n$.

(i) Define $\max\lambda=\max\{\lambda_i\colon1\le i\le r\}$ and
$M(\lambda)=\{j\colon 1\le j\le r\mbox{ and }\lambda_j=\max\lambda\}$.
For $k\in M(\lambda)$, define the $r$-part composition $\lambda^{(k)}$
of $n+1$ by $\lambda^{(k)}_k=\lambda_k+1$ and
$\lambda^{(k)}_i=\lambda_i$ if $i\neq k$.

(ii) For $D$  a (principal) diagram with $\lambda_D=\lambda$ and $k\in M(\lambda)$, define a new diagram $D^{(k)}$ by $D^{(k)}=D\cup \{(k, c_D+1)\}$.
Then $D^{(k)}$ is a principal diagram with $\lambda_{D^{(k)}}=\lambda^{(k)}$.
Informally, to construct $D^{(k)}$ a new column is inserted in $D$ immediately after its last column and a single node is inserted in this new column on the $k$-th row.

{\bf Example}. If $\lambda$ $=$ $(1,2,2,1)$, then $Y(\lambda)$ $=$ $\{y_1,y_2,y_3\}$,
where
$y_1$ $=$ $[ 1$, $2$, $5$, $3$, $6$, $4 ]$,
$y_2$ $=$ $[ 3$, $1$, $4$, $2$, $5$, $6 ]$,
and
$y_3$ $=$ $[ 2$, $1$, $3$, $4$, $6$, $5 ]$.
Also, $Y_s(\lambda)$ $=$ $\{y_1,y_2\}$.
We display $\mathcal{E}^{(\lambda)}$ in Table~\ref{tbl:5c}.
\begin{table}[h]
\begin{center}
$\begin{array}{ccccc}
\begin{array}{cc}
   \times  \\
   \times & \times   \\
   \times & \times  \\
   \times  \\
\end{array}
&
\hspace{10pt}
&
\begin{array}{cc}
          & \times  \\
   \times & \times  \\
   \times & \times  \\
          & \times  \\
\end{array}
&
\hspace{10pt}
&
\begin{array}{ccc}
          & \times  \\
   \times & \times  \\
          & \times & \times  \\
          & \times  \\
\end{array}
\end{array}$
\end{center}
\caption{$\mathcal{E}^{(1,2,2,1)}$.}
\label{tbl:5c}
\end{table}
\par
In this case, $M(\lambda)=\{2,3\}$.
$\lambda^{(2)}=(1,3,2,1)$ and $\lambda^{(3)}=(1,2,3,1)$.
Moreover,
$Y(\lambda^{(2)})
$ $=$ $\{y^{(2)}_1,y^{(2)}_2,y^{(2)}_3,$
$y^{(2)}_4,y^{(2)}_5\}$
and
$Y(\lambda^{(3)})
$ $=$ $\{y^{(3)}_1,y^{(3)}_2,y^{(3)}_3,y^{(3)}_4,y^{(3)}_5\}$,
where
$y^{(2)}_1\!$ $=$ $\![1$, $2$, $5$, $7$, $3$, $6$, $4]$,
$y^{(2)}_2\!$ $=$ $\![3$, $1$, $4$, $7$, $2$, $5$, $6]$,
$y^{(2)}_3\!$ $=$ $\![2$, $1$, $3$, $7$, $4$, $6$, $5]$,
$y^{(2)}_4\!$ $=$ $\![4$, $1$, $3$, $5$, $2$, $6$, $7]$,
and
$y^{(2)}_5\!$ $=$ $\![3$, $1$, $2$, $4$, $5$, $7$, $6]$,
and
$y^{(3)}_1\!$ $=$ $\![4$, $2$, $5$, $1$, $3$, $6$, $7]$,
$y^{(3)}_2\!$ $=$ $\![2$, $3$, $6$, $1$, $4$, $7$, $5]$,
$y^{(3)}_3\!$ $=$ $\![3$, $2$, $4$, $1$, $5$, $7$, $6]$,
$y^{(3)}_4\!$ $=$ $\![1$, $2$, $6$, $3$, $5$, $7$, $4]$,
$y^{(3)}_5\!$ $=$ $\![2$, $1$, $3$, $4$, $6$, $7$, $5]$,
Also,
$Y_s(\lambda^{(2)})$ $=$ $\{y^{(2)}_1,y^{(2)}_2,y^{(2)}_4\}$
and
$Y_s(\lambda^{(3)})$ $=$ $\{y^{(3)}_1,y^{(3)}_2,y^{(3)}_4\}$.
We display $\mathcal{E}^{(\lambda^{(2)})}$ and
$\mathcal{E}^{(\lambda^{(3)})}$ in Table~\ref{tbl:6c} and
Table~\ref{tbl:7c}, respectively.
\begin{table}[h]
\begin{center}
$\begin{array}{ccccccccc}
\begin{array}{ccc}
   \times  \\
   \times & \times & \times  \\
   \times & \times  \\
   \times  \\
\end{array}
&
\hspace{10pt}
&
\begin{array}{ccc}
          & \times  \\
   \times & \times & \times  \\
   \times & \times  \\
          & \times  \\
\end{array}
&
\hspace{10pt}
&
\begin{array}{cccc}
          & \times  \\
   \times & \times &        & \times  \\
          & \times & \times  \\
          & \times  \\
\end{array}
&
\hspace{10pt}
&
\begin{array}{ccc}
          &        & \times  \\
   \times & \times & \times  \\
   \times &        & \times  \\
          &        & \times  \\
\end{array}
&
\hspace{10pt}
&
\begin{array}{cccc}
          &        & \times  \\
   \times & \times & \times  \\
          &        & \times & \times  \\
          &        & \times  \\
\end{array}
\end{array}$
\end{center}
\caption{$\mathcal{E}^{(1,3,2,1)}$.}
\label{tbl:6c}
\end{table}
\begin{table}[h]
\begin{center}
$\begin{array}{ccccccccc}
\begin{array}{ccc}
          &        & \times  \\
          & \times & \times  \\
   \times & \times & \times  \\
          &        & \times  \\
\end{array}
&
\hspace{10pt}
&
\begin{array}{ccc}
          & \times  \\
          & \times & \times  \\
   \times & \times & \times  \\
          & \times  \\
\end{array}
&
\hspace{10pt}
&
\begin{array}{cccc}
          &        & \times  \\
          & \times & \times  \\
   \times &        & \times & \times  \\
          &        & \times  \\
\end{array}
&
\hspace{10pt}
&
\begin{array}{ccc}
   \times  \\
   \times &        & \times  \\
   \times & \times & \times  \\
   \times  \\
\end{array}
&
\hspace{10pt}
&
\begin{array}{cccc}
          & \times  \\
   \times & \times  \\
          & \times & \times & \times  \\
          & \times  \\
\end{array}
\end{array}$
\end{center}
\caption{$\mathcal{E}^{(1,2,3,1)}$.}
\label{tbl:7c}
\end{table}

Observe that $\{D^{(2)}: D\in \mathcal{E}^{(1,2,2,1)}\}\subseteq \mathcal{E}^{(1,3,2,1)}$.
Note also that there are various injections $Y(\lambda)\rightarrow Y(\lambda^{(2)})$
given by
$y_1\mapsto y^{(2)}_1$,
$y_2\mapsto y^{(2)}_2$ or $y^{(2)}_4$,
and
$y_3\mapsto y^{(2)}_3$ or $y^{(2)}_5$.
These injections induce injections
$Y_s(\lambda)\rightarrow Y_s(\lambda^{(2)})$.
There is a similar situation for $\lambda^{(3)}$.
%

While the aim is to show that the situation in
the preceding example
generalizes, the following lemmas establish a complementary result.
%
\begin{lemma}\label{lem4.5n}
Let $\lambda\vDash n$ and $k\in M(\lambda)$.
Let $B,C\in \mathcal D^{(\lambda)}$ and let $B$ be an admissible diagram.
Let $\overline{B}=B^{(k)}$ and $\overline{C}=C^{(k)}$.
Then,\\
(i) $\overline{B}$ is an admissible diagram.\\
(ii) If $w_C=w_B$ then $w_{\overline{B}}=w_{\overline{C}}$.
\end{lemma}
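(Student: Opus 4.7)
My plan is to treat the two parts by direct analysis of the effect of adjoining the single node $(k,c_B+1)$ to $B$. Let $m=\max\lambda$ and write $\mu=(\lambda^{(k)})'$. Since $k\in M(\lambda)$, one checks immediately that $\mu=(\lambda'_1,\ldots,\lambda'_m,1)$; in particular $\sum_{j\le u}\mu_j=\sum_{j\le u}\lambda'_j$ for $1\le u\le m$, and $\sum_{j\le m+1}\mu_j=n+1$.

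For part~(i), Proposition~\ref{prop:3.8c} already gives the upper bound $\nu\unlhd\mu$ for the subsequence type $\nu$ of $\overline B$, so it suffices to exhibit, for each $u$, a $u$-path in $\overline B$ of length $\sum_{j\le u}\mu_j$. For $1\le u\le m$ I would take any $u$-path in $B$ attaining $\sum_{j\le u}\lambda'_j$ (available by admissibility of $B$) and use the fact that it is still a $u$-path in $\overline B\supseteq B$. For $u=m+1$ I would take a full $m$-path of $B$ covering all $n$ nodes (again using admissibility, which forces a path-system of total length $\sum_{j\le m}\lambda'_j=n$) and adjoin the singleton path $\{(k,c_B+1)\}$; since this node is not in $B$, the result is a genuine $(m+1)$-path of length $n+1$. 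Admissibility of $\overline B$ follows.

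For part~(ii), the strategy is to compute the row-form of $w_{\overline B}$ explicitly and show it is determined by $w_B$, $\lambda$ and $k$ alone. Writing $\ell=\lambda_1+\cdots+\lambda_k$, the crucial observation is that $(k,c_B+1)$ is the unique node in a brand-new rightmost column and sits on a row of maximum length. Column-filling therefore places $n+1$ at $(k,c_B+1)$ and is otherwise identical to $t_B$; row-filling keeps the labels on rows $1,\ldots,k-1$ identical to those of $t^B$, and on row $k$ the original $B$-positions retain their $t^B$-labels while the new position $(k,c_B+1)$---having the largest column index on its row---collects the last row-$k$ label $\ell+1$, with all labels on rows $>k$ shifted by $+1$. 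Equating $t^{\overline B}w_{\overline B}=t_{\overline B}$ then yields the identity $w_{\overline B}=s_{\ell+1}s_{\ell+2}\cdots s_n\cdot w_B$ (where $w_B$ is regarded as an element of $W'$ by fixing $n+1$). Since the right-hand factor depends on $B$ only through $w_B$, $w_B=w_C$ forces $w_{\overline B}=w_{\overline C}$. The main obstacle I anticipate is the careful bookkeeping in the row-filling step; the maximality of $\lambda_k$ is precisely what guarantees that $(k,c_B+1)$ receives label $\ell+1$, ensuring a uniform $+1$ shift on the rows below $k$ and a formula for $w_{\overline B}$ that is independent of $B$.
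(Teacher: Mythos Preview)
Your proof is correct and, for part~(i), essentially identical to the paper's: the paper also invokes Proposition~\ref{prop:3.8c} for the upper bound $\nu\unlhd(\lambda^{(k)})'$ and then observes that the construction of $\overline B$ forces $(\lambda_1',\ldots,\lambda_m',1)\unlhd\nu$, the only difference being that you spell out the $u$-paths explicitly. For part~(ii) the paper simply writes ``is immediate''; your explicit formula $w_{\overline B}=s_{\ell+1}\cdots s_n\,w_B$ is exactly the computation that the paper carries out later, in the proof of Theorem~\ref{thm:6.5c}(i) (with $\ell$ written there as $p$), so the two approaches coincide.

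One small correction to your closing remark: the hypothesis $k\in M(\lambda)$ is \emph{not} what ensures that $(k,c_B+1)$ receives the row-filling label $\ell+1$. That follows already from $c_B+1$ exceeding every column index of $B$, so the new node is automatically last on its row regardless of the size of $\lambda_k$; your formula $w_{\overline B}=s_{\ell+1}\cdots s_n\,w_B$ therefore holds for arbitrary $k$. The maximality assumption $k\in M(\lambda)$ is used only in part~(i), precisely to get $(\lambda^{(k)})'=(\lambda_1',\ldots,\lambda_m',1)$, i.e.\ to make the target partition the one for which your path constructions achieve equality.
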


\begin{proof}
(i) Let $\nu$ be the subsequence type of type $\overline{B}$ and let $\overline{\lambda}=\lambda^{(k)}$.
Since $\lambda_{\overline{B}}=\overline{\lambda}$ we have $\nu\unlhd\overline{\lambda}$ by Proposition~\ref{prop:3.8c}.
Write $\lambda'=(\lambda_1',\ldots,\lambda_{r'}')$.
Since $B$ is admissible, we see from the construction of $\overline{B}$ that $(\lambda_1',\ldots,\lambda_{r'}',1)\unlhd \nu$.
The assumption $k\in M(\lambda)$ ensures that $\overline{\lambda}'=(\lambda_1',\ldots,\lambda_{r'}',1)$, hence $\overline{\lambda}'\unlhd \nu$.
We conclude that $\nu=\overline{\lambda}'$ and $\overline{B}$ is admissible.

(ii) is immediate.
\end{proof}

\begin{lemma}
\label{lem:6.4c}
Let $\lambda\vDash n$,
$z\in Z(\lambda)$, $D=D(z,\lambda)$ and
$k\in M(\lambda)$.
Also let $\overline{\lambda}=\lambda^{(k)}$,
$\overline{D}=D^{(k)}$ and $\overline{z}=w_{\overline{D}}$.
We then have

(i) $\overline{D}$ is an admissible diagram and $\overline{z}\in Z(\overline{\lambda})$.
(Hence, there is a well-defined map $\theta$ from $Z(\lambda)$ to $Z(\overline{\lambda})$ in which $z\mapsto\overline{z}$.)

(ii)
If $z\not\in Y(\lambda)$ then
$\overline{z}\not\in Y(\overline{\lambda})$.
\end{lemma}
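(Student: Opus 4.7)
My plan for (i) is a direct application of the admissibility criterion. Since $z \in Z(\lambda)$ with $w_D = z$ and $\lambda_D = \lambda$, Theorem~\ref{thm:3.9c}(ii) gives admissibility of $D$; Lemma~\ref{lem4.5n}(i) then yields admissibility of $\overline{D}$, and another application of Theorem~\ref{thm:3.9c}(ii) gives $\overline{z} \in Z(\overline{\lambda})$. Well-definedness of $\theta$ is immediate from the uniqueness of $D(z,\lambda)$ guaranteed by Result~\ref{res:ischia}.

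For (ii), I would invoke Proposition~\ref{prop:3.12c} twice. First apply it to $z \notin Y(\lambda)$ to obtain an admissible $E \in \mathcal{D}^{(\lambda)}$ with $w_E \neq w_D$ and $t^E w_D$ a standard $E$-tableau. Setting $\overline{E} = E^{(k)}$, which is admissible by Lemma~\ref{lem4.5n}(i), it will then suffice to verify that $t^{\overline{E}} w_{\overline{D}}$ is standard and $w_{\overline{D}} \neq w_{\overline{E}}$, so that $\overline{E}$ serves as a witness via the converse direction of Proposition~\ref{prop:3.12c} to conclude $\overline{z} \notin Y(\overline{\lambda})$.

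The central step, and the main technical obstacle, is to describe $w_{\overline{D}}$ explicitly as a permutation of $\{1,\ldots,n+1\}$ in terms of $w_D$. Setting $p_k = \lambda_1 + \cdots + \lambda_k$, a side-by-side comparison of $t^{\overline{D}}$ with $t^D$ (entries in rows $\leq k$ are unchanged, entries in rows $> k$ are shifted up by $1$, and the added node $(k, c_D + 1)$ carries the new value $p_k + 1$) and of $t_{\overline{D}}$ with $t_D$ (entries in columns $\leq c_D$ are unchanged, with $n+1$ at the added node) should yield
\[
a w_{\overline{D}} = \begin{cases} a w_D & \text{if } 1 \leq a \leq p_k, \\ n+1 & \text{if } a = p_k + 1, \\ (a-1) w_D & \text{if } p_k + 2 \leq a \leq n+1. \end{cases}
\]
Equivalently, the row form of $w_{\overline{D}}$ arises from that of $w_D$ by inserting $n+1$ at position $p_k + 1$; the identical recipe relates $w_{\overline{E}}$ to $w_E$, so the hypothesis $w_D \neq w_E$ immediately gives $w_{\overline{D}} \neq w_{\overline{E}}$.

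To finish, I would apply the formula entry-by-entry to $t^{\overline{E}}$: the tableau $t^{\overline{E}} w_{\overline{D}}$ agrees with $t^E w_D$ on all nodes of $E$ and carries the value $n+1$ at the new node $(k, c_E + 1)$. Under the partial order on $\mathbb{Z}^2$ governing standardness, this new node sits above every node of $E$ with row index $\leq k$ (all column indices in $E$ being at most $c_E < c_E + 1$) and is incomparable with those of row index $> k$; placing the maximal value $n+1$ at this new node therefore preserves standardness of $t^E w_D$. With the permutation formula in hand, this bookkeeping is routine, and the proof is complete.
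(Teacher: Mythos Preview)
Your proposal is correct and follows essentially the same route as the paper: apply Theorem~\ref{thm:3.9c}(ii) and Lemma~\ref{lem4.5n}(i) for part~(i), and for part~(ii) invoke Proposition~\ref{prop:3.12c} to obtain a witness $E$, pass to $\overline{E}=E^{(k)}$, and verify that $t^{\overline{E}}w_{\overline{D}}$ is standard with $w_{\overline{D}}\neq w_{\overline{E}}$. Your explicit permutation formula for $w_{\overline{D}}$ is exactly the computation the paper records (slightly later, in the proof of Theorem~\ref{thm:6.5c}); the paper's own proof of the lemma states the key tableau identity $(u)t'=(u)t$ for $u\in E$ and $(k,c_E+1)t'=n+1$ without writing out the underlying permutation, and deduces $w_{\overline{D}}\neq w_{\overline{E}}$ by comparing $t'$ with $t_{\overline{E}}$ rather than row forms, but this is the same argument packaged differently.
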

\begin{proof}
Assume the hypothesis.

(i) We have $z=w_D$ and $z\in Z(\lambda)$ so $D$ is admissible by Theorem~\ref{thm:3.9c}.
Invoking Lemma~\ref{lem4.5n}(i) we see that $\overline{D}$ is also admissible and hence $\overline{z}=w_{\overline{D}}\in Z(\overline{\lambda})$.


(ii)
Suppose now that $z\not\in Y(\lambda)$.
Since $z\in Z(\lambda)$, $z=w_D$, $\lambda_D=\lambda$ and
$D$ is admissible,
we get from Proposition~\ref{prop:3.12c} that there is an
admissible diagram $E\in\mathcal{D}^{(\lambda)}$ such that
$w_E\neq w_D$ and $t^Ew_D$ is a standard $E$-tableau.
The fact that $w_E\ne w_D$ ensures that $t=t^Ew_D\ne t_E$.
Let $F=E^{(k)}$.
Then $\lambda_{F}=\overline{\lambda}$ and $F$ is admissible by Lemma~\ref{lem4.5n}(i).
Now let $t'=t^{F}w_{\overline{D}}$.
Then $(u)t'=(u)t$ for all $u\in E$ and $(k,c_E+1)t'=n+1$ so $t'$ is a standard $F$-tableau.
It is also easy to see from this construction that $(u)t_F=(u)t_E$ for all $u\in E$.
The fact that $t\ne t_E$ ensures that $t^Fw_{\overline{D}}=t'\ne t_F=t^Fw_F$.
We conclude that $w_{\overline{D}}\ne w_F$ and this is enough to complete the proof in view of Proposition~\ref{prop:3.12c}.
\end{proof}

As motivation for our next result, we now consider the case $\lambda=(2,1,1,2)$, in which
$1\in M(\lambda)$.
Here, $Y(\lambda)$ $=$ $\{y_1,y_2,y_3\}$, where
$y_1$ $=$ $[ 1$, $5$, $2$, $3$, $4$, $6 ]$,
$y_2$ $=$ $[ 1$, $3$, $4$, $5$, $2$, $6 ]$,
and
$y_3$ $=$ $[ 1$, $4$, $2$, $5$, $3$, $6 ]$,
Also, $Y_s(\lambda)$ $=$ $\{y_1,y_2\}$.
We display $\mathcal{E}^{(\lambda)}$ in Table~\ref{tbl:8c}.
\begin{table}[h]
\begin{center}
$\begin{array}{ccccc}
\begin{array}{cc}
  \times & \times \\
  \times \\
  \times \\
  \times & \times \\
\end{array}
&
\hspace{10pt}
&
\begin{array}{cc}
  \times & \times \\
         & \times \\
         & \times \\
  \times & \times \\
\end{array}
&
\hspace{10pt}
&
\begin{array}{ccc}
  \times & \times \\
  \times \\
         & \times \\
  \times & \times \\
\end{array}
\end{array}$
\end{center}
\caption{$\mathcal{E}^{(2,1,1,2)}$.}
\label{tbl:8c}
\end{table}
\par
In this case,
$\lambda^{(1)}=(3,1,1,2)$.
Moreover,
$Y(\lambda^{(1)})
$ $=$ $\{y^{(1)}_1,y^{(1)}_2,y^{(1)}_3\}$
where
$y^{(1)}_1\!$ $=$ $\![ 1$, $5$, $7$, $2$, $3$, $4$, $6 ]$,
$y^{(1)}_2\!$ $=$ $\![ 1$, $3$, $7$, $4$, $5$, $2$, $6 ]$,
and
$y^{(1)}_3\!$ $=$ $\![ 1$, $4$, $7$, $2$, $5$, $3$, $6 ]$,
and $Y_s(\lambda^{(1)})=\{y^{(1)}_1,y^{(1)}_2\}$.
We find in this case that there is a bijection
$Y(\lambda)\rightarrow Y(\lambda^{(1)})$,
which induces a bijection
$Y_s(\lambda)\rightarrow Y_s(\lambda^{(1)})$.
We display $\mathcal{E}^{(\lambda^{(1)})}$ in Table~\ref{tbl:9c}.
\begin{table}[h]
\begin{center}
$\begin{array}{ccccc}
\begin{array}{ccc}
  \times & \times & \times \\
  \times \\
  \times \\
  \times & \times \\
\end{array}
&
\hspace{10pt}
&
\begin{array}{ccc}
  \times & \times & \times \\
         & \times \\
         & \times \\
  \times & \times \\
\end{array}
&
\hspace{10pt}
&
\begin{array}{cccc}
  \times & \times & \times \\
  \times  \\
         & \times \\
  \times & \times \\
\end{array}
\end{array}$
\end{center}
\caption{$\mathcal{E}^{(3,1,1,2)}$.}
\label{tbl:9c}
\end{table}
\par
Since $4\in M(\lambda)$, we get a similar analysis and result.
Here, $\lambda^{(4)}=(2,1,1,3)$ and
$Y(\lambda^{(4)}) $ $=$ $\{y^{(4)}_1,y^{(4)}_2,y^{(4)}_3\}$,
where
$y^{(4)}_1\!$ $=$ $\![ 2$, $4$, $5$, $6$, $1$, $3$, $7 ]$,
$y^{(4)}_2\!$ $=$ $\![ 2$, $5$, $3$, $6$, $1$, $4$, $7 ]$,
and
$y^{(4)}_3\!$ $=$ $\![ 2$, $6$, $3$, $4$, $1$, $5$, $7 ]$,
and
$Y_s(\lambda^{(4)}) $ $=$ $\{y^{(4)}_1,y^{(4)}_3\}$.
There is, however, a very close connection between $Y(\lambda^{(1)})$
and  $Y(\lambda^{(4)})$, since every diagram
$D\in\mathcal{D}^{(\lambda^{(1)})}$ corresponds, on rotation through
$180^{\circ}$, to a diagram $\dot D$ with
$\dot D\in\mathcal{D}^{(\lambda^{(4)})}$, and this correspondence is
bijective.
We display $\mathcal{E}^{(\lambda^{(4)})}$ in Table~\ref{tbl:10c}.
\begin{table}[h]
\begin{center}
$\begin{array}{ccccc}
\begin{array}{ccc}
          & \times & \times \\
          &        & \times \\
          &        & \times \\
   \times & \times & \times \\
\end{array}
&
\hspace{10pt}
&
\begin{array}{ccc}
          & \times & \times \\
          & \times \\
          &        & \times \\
   \times & \times & \times \\
\end{array}
&
\hspace{10pt}
&
\begin{array}{cccc}
          & \times & \times \\
          & \times \\
          & \times \\
   \times & \times & \times \\
\end{array}
\end{array}$
\end{center}
\caption{$\mathcal{E}^{(2,1,1,3)}$.}
\label{tbl:10c}
\end{table}
\par
\begin{theorem}
\label{thm:6.5c}
Let $\lambda\vDash n$, let $k\in M(\lambda)$ and
let $\overline{\lambda}=\lambda^{(k)}$.
Also let $d=s_{p+1}\dots s_n$ and $\overline{d}=s_{q+1}\dots s_n$, where $p=\sum_{1\le i\le k}\lambda_i$ and $q=\sum_{1\le i\le k-1}\lambda_i$ (so that $q=0$ when $k=1$).
Finally, let $\theta: Z(\lambda)\to Z(\overline{\lambda}):$ $z\mapsto \overline{z}$ be the map defined in Lemma~\ref{lem:6.4c}.
Then,

(i) For each $z\in Z(\lambda)$ we have $z\theta\,(=\overline{z})=dz$ (hence $\theta$ is injective),

(ii) $\overline{d}\mathfrak{C}(\lambda) \subseteq\mathfrak{C}(\overline{\lambda})$,

(iii) Any one of conditions (a), (b) or (c) below is sufficient to ensure that $Y(\lambda)\theta\subseteq Y(\overline{\lambda})$.

(a) $k=1$,

(b) $k\neq\max M(\lambda)$,

(c) $M(\lambda)=\{k\}$ and $\max\{i\colon\lambda_i=m\}>k$
where $m=\max\{\lambda_j\colon 1\le j\le r\mbox{ and }j\neq k\}$.

Moreover, in case $k=1$, we have equality $Y(\lambda)\theta=Y(\overline{\lambda})$ and $\theta$ also induces a bijective mapping
$\mathcal{E}^{(\lambda)}\rightarrow\mathcal{E}^{(\overline{\lambda})}$ in which $D\mapsto D^{(k)}$.

\end{theorem}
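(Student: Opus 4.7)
For item (i), I would expand the explicit row-form of $w_{\overline{D}}$ from the construction $\overline{D}=D\cup\{(k,c_D+1)\}$. The new node occupies the last position of row $k$ in the row-filling $t^{\overline{D}}$ (hence receives entry $p+1$) and the last column in the column-filling $t_{\overline{D}}$ (hence receives entry $n+1$); all other entries of $t^{\overline{D}}$ in rows $<k$ and in the first $\lambda_k$ positions of row $k$ coincide with the corresponding entries of $t^D$, while entries in rows $>k$ are shifted up by one. Reading off $w_{\overline{D}}$ as a permutation in $S_{n+1}$ yields $mw_{\overline{D}}=mw_D$ for $m\le p$, $(p+1)w_{\overline{D}}=n+1$, and $mw_{\overline{D}}=(m-1)w_D$ for $p+1<m\le n+1$. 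Since $d=s_{p+1}\cdots s_n$ acts as the cycle sending $p+1\mapsto n+1$ and $a\mapsto a-1$ for $p+1<a\le n+1$, direct multiplication $dz=d\cdot w_D$ (with $w_D$ embedded in $S_{n+1}$) produces exactly the same row form, so $z\theta=dz$. Injectivity of $\theta$ then follows from that of left multiplication by $d$.

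For item (ii), the key step is the identity $\overline{d}\,w_{J(\lambda)}=w_{J(\overline{\lambda})}\,d$. I would verify this by a block-by-block calculation on $\{1,\dots,n+1\}$: on $\{1,\dots,q\}$ both sides restrict to the common reversal of the first $k-1$ blocks; on $\{q+1,\dots,p+1\}$ both sides send $q+1$ to $n+1$ and carry the remaining indices via the reversal of the $k$-th block $\{q+1,\dots,p\}$ of $\lambda$; and on $\{p+2,\dots,n+1\}$ the shifts $d$ and $\overline{d}$ compensate exactly for the one-position offset between the trailing blocks of $\lambda$ and $\overline{\lambda}$. Once this identity is established, for any $z\in Z(\lambda)$ Lemma~\ref{lem:6.4c} gives $z\theta\in Z(\overline{\lambda})$, hence $w_{J(\overline{\lambda})}(z\theta)\in\mathfrak C(\overline{\lambda})$, and combining with (i) yields $\overline{d}\,w_{J(\lambda)}z=w_{J(\overline{\lambda})}\,dz=w_{J(\overline{\lambda})}(z\theta)\in\mathfrak C(\overline{\lambda})$.

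For item (iii), I would argue by contrapositive via Proposition~\ref{prop:3.12c}. Assume $y\in Y(\lambda)$ and $\overline{y}=y\theta\notin Y(\overline{\lambda})$; then there is an admissible $E\in\mathcal D^{(\overline{\lambda})}$ with $w_E\ne\overline{y}$ and $t^E\overline{y}$ standard. Because $\overline{y}=dy$ sends $p+1$ to $n+1$, the node $u\in E$ whose $t^E$-entry is $p+1$ carries $t^E\overline{y}$-entry $n+1$, so standardness forces $u$ to be a maximal node of $E$ (no node of $E$ lies weakly south-east of $u$ except $u$ itself). The aim is to delete a carefully chosen node of $E$ to obtain an admissible diagram $E'\in\mathcal D^{(\lambda)}$ with $t^{E'}y$ standard and $w_{E'}\ne y$, contradicting $y\in Y(\lambda)$. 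In case (a) where $k=1$, $u$ is the rightmost node of row $1$ of $E$ and, being maximal, is the sole node in its column; deleting $u$ gives $E'\in\mathcal D^{(\lambda)}$, and admissibility follows since every maximum-length $j$-path in $E$ must pass through $u$ (otherwise $E'$ would have subsequence type strictly exceeding $\lambda'$, contradicting Proposition~\ref{prop:3.8c}), so removing $u$ converts such paths into $j$-paths of length exactly $\lambda_1'+\cdots+\lambda_j'$ in $E'$. Cases (b) and (c) proceed along the same lines: the existence of some $i>k$ with $\lambda_i=\max\lambda$ (resp.~$\lambda_i=m$) provides a second longest row which is used either to translate $u$ within row $k$ or to select a replacement maximal node so that deletion preserves both the row-composition $\lambda$ and the required subsequence type. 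Verifying admissibility of the reduced diagram in cases (b) and (c) is the main technical obstacle, since $u$ need not lie at the end of row $k$ and the maximal $k$-path structure of $E$ can require delicate reorganization.

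For the equality $Y(\lambda)\theta=Y(\overline{\lambda})$ when $k=1$, I would prove the reverse inclusion constructively. Given $\overline{y}\in Y(\overline{\lambda})$, set $\overline{D}=D(\overline{y},\overline{\lambda})$. Specializing the maximality analysis above (with $E=\overline{D}$) shows $u=(1,c_{\overline{D}})$ is the unique node in its column, and that $\overline{D}=D^{(1)}$ for $D=\overline{D}\setminus\{u\}\in\mathcal D^{(\lambda)}$. The path-deletion argument of (iii)(a) shows $D$ is admissible, so $w_D\in Z(\lambda)$. Finally, $w_D\in Y(\lambda)$: if $w_D$ were a proper prefix of some $z\in Z(\lambda)$, then combining (i) with Lemma~\ref{lem4.5n} would make $\overline{y}=(w_D)\theta$ a proper prefix of $z\theta\in Z(\overline{\lambda})$, contradicting $\overline{y}\in Y(\overline{\lambda})$. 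This produces the bijection $\mathcal E^{(\lambda)}\to\mathcal E^{(\overline{\lambda})}$ given by $D\mapsto D^{(1)}$.
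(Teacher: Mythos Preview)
Your arguments for (i) and (ii) match the paper's: both compute the row-form of $w_{\overline{D}}$ directly and verify the identity $\overline{d}\,w_{J(\lambda)}=w_{J(\overline{\lambda})}\,d$. The contrapositive strategy for (iii) via Proposition~\ref{prop:3.12c}, together with the observation that the node $u$ carrying $n+1$ in $t^E\overline{y}$ is maximal, is also the paper's. But there are genuine gaps in the execution.

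In case (a), your admissibility argument is incorrect. The claim that ``every maximum-length $j$-path in $E$ must pass through $u$'' is false: for $j\le r'$ a maximum $j$-path in $E$ has length $\sum_{i\le j}\overline{\lambda}_i'=\sum_{i\le j}\lambda_i'$, and such a path need \emph{not} contain $u$; when it does not, it already lies in $E'$ and witnesses the required lower bound --- no contradiction arises. The paper argues instead that, since $u$ lies alone in the last column of $E$ and on the first row, any path through $u$ has length~$1$; from this the needed $j$-paths in $E'$ are easily reconstructed.

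For (b) and (c) you concede the difficulty and offer only the vague suggestion to ``translate $u$ within row $k$ or select a replacement maximal node''. The paper's route is different and direct. One picks a row $l>k$ with $\lambda_l$ maximal among rows $\ne k$ (so $\lambda_l=\max\lambda$ in case~(b), and $\lambda_l=m$ in case~(c)). For the relevant range of $u$, Proposition~\ref{prop:3.7c} forces every maximum-length $u$-path in $E$ to have $u$ nodes on row~$l$, so each constituent path reaches row~$l$. But any path containing the maximal node $(k,j)$ must terminate there, on row $k<l$, and hence cannot reach row~$l$. Thus $(k,j)$ is simply absent from all such $u$-paths, which therefore lie in~$E'$ unchanged. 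No translation or replacement is needed.

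Your reverse-inclusion argument for $k=1$ contains a circular step. You write ``specializing the maximality analysis above (with $E=\overline{D}$) shows $u=(1,c_{\overline{D}})$ is the unique node in its column'', but that analysis relied on $(p+1)\overline{y}=n+1$, which holds only when $\overline{y}$ is already known to equal $y\theta$ for some $y$ --- precisely what you are trying to prove. A priori the last node of row~$1$ of $\overline{D}=D(\overline{y},\overline{\lambda})$ need not sit in the last column. The paper first deletes the last node of row~$1$ of $C=\overline{D}$ (wherever it lies), proves the resulting diagram $B$ admissible by a row-$1$ node-replacement argument using Proposition~\ref{prop:3.7c}, and only \emph{then} uses $\overline{y}\in Y(\overline{\lambda})$ together with Proposition~\ref{prop:3.12c} to force $C=B^{(1)}$.
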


\begin{proof}
Let $\lambda$, $k$, $\overline{\lambda}$, $d$, $\overline{d}$ and $\theta$ be as in the statement of the theorem.
For each $z\in Z(\lambda)$, we define $\overline{z}=w_{\overline{D}}$
where $D=D(z,\lambda)$ and $\overline{D}=D^{(k)}$.
From the construction of $\overline{D}$ we have
$i\overline{z}=iz$ if $1\le i\le p$,
$(p+1)\overline{z}=n+1$,
and $i\overline{z}=(i-1)z$ if $p+2\le i\le n+1$.
Thus, $z\theta=\overline{z}=dz$, where $d=(n+1,n,\dots,p+1)=s_{p+1}\dots s_n$.
It follows that if $z_1,z_2\in Z(\lambda)$ and $z_1\neq z_2$
then $\overline{z_1}\neq\overline{z_2}$.
Hence the mapping $z\mapsto\overline{z}$ is injective.
By a straightforward calculation, it can be shown that
$\overline{d}w_{J(\lambda)}=w_{J(\overline{\lambda})}d$.
Hence, $\overline{d}\mathfrak{C}(\lambda)$
$=\overline{d}w_{J(\lambda)}Z(\lambda)$
$=w_{J(\overline{\lambda})}dZ(\lambda)$
$\subseteq w_{J(\overline{\lambda})}Z(\overline{\lambda})$
$=\mathfrak{C}(\overline{\lambda})$.
This completes the proof of items (i) and (ii) of the theorem.

In order to prove item (iii) suppose, in addition, that any one of conditions (a), (b) or (c) holds.
Also let $y\in Y(\lambda)$, and set $\overline{y}=w_{\overline{D}}$ where $D=D(y,\lambda)$ and $\overline{D}=D^{(k)}$.
We suppose that $\overline{y}\not\in Y(\overline{\lambda})$ and aim to obtain a contradiction.
By Proposition~\ref{prop:3.12c}, there exists a diagram $F$ of subsequence type
$\overline{\lambda}'=(\lambda_1',\ldots,\lambda_{r'}',1)$, where
$\lambda'=(\lambda_1',\ldots,\lambda_{r'}')$, satisfying
$\lambda_F=\overline{\lambda}$ and $w_F\neq w_{\overline{D}}$ and a
standard $F$-tableau $t=t^Fw_{\overline{D}}\ne t^Fw_F=t_F$.
We may suppose that $F$ has no empty columns, that is $F$ is principal.
The last entry of the $k$-th row of $t$ is $(p+1)\overline{y}=n+1$.
Let $(k,j)$ be the node of $F$ for which $(k,j)t=n+1$.
Since $t$ is standard, the only node $(i',j')$ of $F$ with $i'\ge k$
and $j'\ge j$ is the node $(k,j)$.
Hence, any path in $F$ containing the node $(k,j)$ terminates in this
node.
We construct a diagram $E$ from $F$ by removing the node $(k,j)$ and
use $E$ to obtain a contradiction.
The tableau $\hat t$ obtained from $t$ by removing the entry $(k,j)t$ is a
standard $E$-tableau and $\hat t=t^Ew_D$.
Clearly, $\lambda_E=\lambda$.
Let $\nu$ be the subsequence type of $E$.
By Proposition~\ref{prop:3.8c}, $\nu\unlhd\lambda'$.	
We deal separately with the cases (a)--(c).

(a) Here, $k=1\in M(\lambda)$.
Then $n+1$ is the only entry in the last column of $t$.
Hence, $(1,c_F)$ is a node of $F$ and is the only node in the last
column of $F$.
Since the only path in $F$ containing $(1,c_F)$ has length 1,
$\nu=\lambda'$ and $E$ is admissible.

(b) 
Now let $l=\max M(\lambda)$ so that $k<l$ and $\lambda_k=\lambda_l=r'$.
Let $\Pi_u$ be a $u$-path in $F$ of length
$\sum_{1\le j\le u}\lambda_j'$ where $1\le u\le r'$.
Then, by Proposition~\ref{prop:3.7c}, $\Pi_u$ has $u$ nodes on each
of the $k$-th and $l$-th rows.
Thus, all $u$ paths of $\Pi_u$ contain nodes on the $l$-th row.
Since a path containing node $(k,j)$ must terminate at it, $(k,j)$
cannot be a node of $\Pi_u$.
Hence, $\Pi_u$ is a $u$-path of $E$.
This proves that $E$ is admissible.
By considering $\Pi_{r'}$, we see that all nodes of $E$ on the $l'$-th
row, where $l'<l$, are in columns containing nodes of $E$ on the
$l$-th row or to the left of such columns.
Hence, node $(k,j)$ is the only node of $F$ on the $j$-th column and
there are no nodes of $E$ on or to the right of the $j$-th column.

(c)
In this case, we first let $u=\lambda_l$ where
$l=\max\{i\colon\lambda_i=m\}$ and let $\Pi$ be a $u$-path of $F$ of
length $\sum_{1\le j\le u}\lambda_j'$.
By Proposition~\ref{prop:3.7c}, $\Pi$ contains all nodes on the
$l$-th row of $F$ and by the argument in (b), $\Pi$ cannot contain
node $(k,j)$.
Hence, $\Pi$ is a $u$-path in $E$.

Since $\lambda_j'=1$ for $u<j\le r'$, we can form a $u'$-path in $E$
of length $\sum_{1\le j\le u'}\lambda_j'$ for any $u'$ such that
$u<u'\le r'$ by adding $u'-u$ paths of length 1 whose nodes are
chosen arbitrarily from the $r'-u$ nodes on the $k$-th row of $E$ which
are not in $\Pi$.

Finally, if $1\le u'<u$, let $\Pi$ be a $u'$-path of $F$ of
length $\sum_{1\le j\le u'}\lambda_j'$.
By Proposition~\ref{prop:3.7c}, $\Pi$ contains exactly $u'$ nodes on
the $l$-th row of $F$ and again by the argument in (b), $\Pi$ cannot
contain node $(k,j)$.
Hence, $\Pi$ is a $u'$-path in $E$.
So, in this case, $E$ is again admissible.
By an argument similar to that in case (b), we again find
that node $(k,j)$ is the only node of $F$ on the $j$-th column and
there are no nodes of $E$ on or to the right of the $j$-th column.

We complete cases (a)--(c) by noting that, since
$w_{\overline{D}}\neq w_F$ and $n+1$ appears as the sole entry in
the last columns of $t_{\overline{D}}$ and $t_F$, we get $w_D\neq w_E$.
By Proposition~\ref{prop:3.12c}, $y\not\in Y(\lambda)$ contrary to
hypothesis.


Next we establish that $Y(\lambda)\theta=Y(\overline{\lambda})$ in the case $k=1$.
Let $x\in Y(\overline{\lambda})$.
We must show that $x=y\theta$ for some $y\in Y(\lambda)$.
Let $C=D(x,\overline{\lambda})$.
Then $\lambda_C=\overline{\lambda}$
and, since $x\in Z(\overline{\lambda})$, $C$ is of subsequence type
$\overline{\lambda}'$ by Theorem~\ref{thm:3.9c}(ii).
Since $\overline{\lambda}_1>\overline{\lambda}_i$ for $i>1$,
$\overline{\lambda}_1$ is the number of parts of the partition
$\overline{\lambda}'$ and the last part of $\overline{\lambda}'$ is $1$.
Let $B$ be the diagram obtained from $C$ by removing the last node of
the first row, and let $\nu$ be its subsequence type.
Then $\lambda_B=\lambda$.
So, $\nu\unlhd\lambda'$ by Proposition~\ref{prop:3.8c}.

Let $\Pi$ be a $u$-path in $C$ where $1\le u<\overline{\lambda}_1$,
and let $(1,j_1),\dots,(1,j_u)$ be the first $u$ nodes on the first
row of $C$.
By Proposition~\ref{prop:3.7c}, $\Pi$ has exactly $u$ nodes on the first row of
$C$; call these nodes $(1,j_1')$, \dots , $(1,j_u')$.
Then $\Pi'=(\Pi\backslash \{(1,j_1')$, \dots , $(1,j_u')\})\cup
\{(1,j_1)$, \dots , $(1,j_u)\}$ forms a $u$-path in $C$ of the
same length as $\Pi$.
Since $\Pi'$ does not contain the final node of the first row of $C$,
$\Pi'$ is a $u$-path of $B$.
Since we can choose $\Pi$ to have length
$\sum_{1\le j\le u}\bar\lambda_j'=\sum_{1\le j\le u}\lambda_j'$,
we find a $u$-path of this length in $B$.
Hence, $B$ is admissible,
$\nu=\lambda'$ and $w_B\in Z(\lambda)$ by
Theorem~\ref{thm:3.9c}(ii).

Let $\overline{B}=B^{(1)}$. Then $\overline{B}$ is admissible by Lemma~\ref{lem4.5n}(i), since $B$ is admissible.
We want to show that $C=\overline{B}$.
Suppose on the contrary that $C\ne\overline{B}$.
In view of the way $C=D(x,\overline{\lambda})$ is constructed from $x$ and $\overline{\lambda}$ (and also the way $B$ is constructed from $C$) we would then have that the last node on the first row of $C$ is not in the last column of $C$ or is not the sole node in the last column of $C$.
It follows from this that $w_C\ne w_{\overline{B}}$.
On the other hand $t^{\overline{B}}w_C$ is a standard $\overline{B}$-tableau since $t^{\overline{B}}w_C$ is constructed from
$t_C=t^Cw_C$ by moving the last entry on the first row of $t_C$ to the
position of the last node of the first row of $\overline{B}$ and
keeping all other entries of $t_C$ fixed.
This contradicts the fact that $x=w_C\in Y(\overline{\lambda})$.
Hence $C=\overline{B}=B^{(1)}$.

%
Let $y=w_B$. Then $w_{\overline{B}}=y\theta$ from Lemma~\ref{lem4.5n}(ii) (or by noting that $B=D(w_B,\lambda)$).
As $w_{\overline{B}}=w_C=x$, we get $x=y\theta$.
Since $B$ is admissible, $y\in Z(\lambda)$.
Since $y\theta=x\in Y(\overline{\lambda})$, it now follows from
Lemma~\ref{lem:6.4c}(ii) that $y\in Y(\lambda)$.
This completes the proof that in the case $k=1$ we have equality $ Y(\lambda)\theta=Y(\overline{\lambda})$.
Finally, it is clear that $\theta$ induces a bijective mapping $\mathcal{E}^{(\lambda)}\rightarrow\mathcal{E}^{(\overline{\lambda})}$, given by $D\mapsto D^{(1)}$.
%
%
\end{proof}

We illustrate Theorem~\ref{thm:6.5c} with the following example.
Consider $\lambda=(2,1,2,2)$.
Then $M(\lambda)=\{1,3,4\}$ and
$Y(\lambda)=\{(2,4)(3,5,6),(2,5,6,4,3)\}$.
If $k=1$, $\overline{\lambda}=(3,1,2,2)$, $Y(\overline{\lambda})$
$=\{(2,4,5)(3,8,7),(2,5,3,8,7,4)\}$, and $d=(8,7,6,5,4,3)$ is a prefix
of both elements of $Y(\overline{\lambda})$ and
$d^{-1}Y(\overline{\lambda})=Y(\lambda)$.
If $k=3$, $\overline{\lambda}=(2,1,3,2)$, and $Y(\overline{\lambda})$
$=\{(1,2,5,3,6,7,4),$ $(2,5,4)(3,6,7),$ $(1,2,6,7,5,4),$
$(2,4)(3,5,6,8,7),$ $(2,5,6,8,7,4,3),$ $(2,6,7,4,3)\}$.
The element $d=(8,7,6)$ is a prefix of the fourth and fifth elements
of $Y(\overline{\lambda})$ and $dY(\lambda)$ consists of these two
elements.
If $k=4$, $\overline{\lambda}=(2,1,2,3)$, and $Y(\overline{\lambda})$
$=\{(1,2,5,7,4,3,6),(1,2,6)(5,7)\}$.
Neither of the elements of $Y(\lambda)$ is a suffix of either of these
elements.

Next we comment on the link Theorem~\ref{thm:6.5c} provides with the restriction of cells.

\begin{remark}
(i) Comparing Theorem~\ref{thm:6.5c} (and keeping the notation introduced there) with the `right analogue' of Result~\ref{res:3c} we see, under the assumption $k\in M(\lambda)$, that the Kazhdan-Lusztig cell $\mathfrak C(\lambda)$ is one of the cells appearing in the decomposition of $\mathfrak C(\overline{\lambda})$ into a disjoint union of sets of the form $e\mathcal{D}$ where $\mathcal{D}$ is a right cell of $S_n$ and $e\in\mathfrak{X}'{}^{-1}$.
To see this, note first that $\mathfrak{X}'{}^{-1}$ is the set of distinguished left coset representatives of $S_n$ into $S_{n+1}$.
Now $\overline{d}\in\mathfrak{X}'{}^{-1}$, so the above is an immediate consequence of the uniqueness of decomposition of any $x'\in S_{n+1}$ in the form $e'x$ where $e'\in\mathfrak{X}'{}^{-1}$ and $x\in S_n$.
\\
The theorem also provides information about which particular element of $\mathfrak{X}'{}^{-1}$ we need to premultiply the elements of $\mathfrak C({\lambda})$ for the particular cases of cells we are considering.
Note that this information is provided by Proposition~\ref{prop:2.2c} in the more general case, however this involves applying the reverse Robinson-Schensted process.
See also~\cite[Theorem~3.3]{Jacon2006} for a different result related to the above discussion.

(ii) In addition to providing a link with the restriction of cells, the theorem also relates the rims of the cells involved.
If we know the rim of $\mathfrak{C}(\lambda)$ we can immediately deduce information about the rim of $\mathfrak{C}(\overline{\lambda})$.
In particular, when $k=1$, mere knowledge of the rim of $\mathfrak{C}(\lambda)$ allows us to determine completely the rim of $\mathfrak{C}(\overline{\lambda})$ and, as a consequence, obtain reduced forms for all the elements in this cell (as the remaining elements of $\mathfrak{C}(\overline{\lambda})$ are the products of $w_{J(\overline{\lambda})}$ with the prefixes of the elements in its rim).

(iii) In view of Result~\ref{res:3c}, another consequence of the above (when $k\in M(\lambda)$) is that the cell module corresponding to $\mathfrak{C}(\lambda)$ is a constituent of the cell module corresponding to $\mathfrak{C}(\overline{\lambda})$ when restricted to $S_n$.
This can also be seen to agree with the Branching Theorem (see~\cite[Theorem 9.2(ii)]{Jam78}) by using the same results as for the induction case earlier on.
\end{remark}

Finally, by applying Proposition~\ref{thm:3.13c}, we can obtain a dual result to Theorem~\ref{thm:6.5c} in a similar manner to how Corollary~\ref{thm:6.3c} is obtained from Theorem~\ref{TheoremReplaceThm6.1}.
In particular, if $\lambda$ is a composition of $n$ with $r$ parts and $r\in M(\lambda)$, there is a bijection between the elements of $Y(\lambda)$ and the elements of $Y(\lambda^{(r)})$.

\end{section}

%


\end{document}